\newcommand{\mathcircumflex}[0]{\mbox{\^{}}}
\numberwithin{equation}{section}
\numberwithin{figure}{section}
\theoremstyle{plain}
\newtheorem{thm}{\protect\theoremname}[section]
  \theoremstyle{definition}
  \newtheorem{defn}[thm]{\protect\definitionname}
  \theoremstyle{plain}
  \newtheorem{fact}[thm]{\protect\factname}
  \theoremstyle{plain}
  \newtheorem{prop}[thm]{\protect\propositionname}
  \theoremstyle{plain}
  \newtheorem{conjecture}[thm]{\protect\conjecturename}
\newcounter{mainthm}
\theoremstyle{plain}
\newtheorem{main_thm}[mainthm]{Main Theorem}
  \theoremstyle{plain}
  \newtheorem{lem}[thm]{\protect\lemmaname}
  \theoremstyle{remark}
  \newtheorem{rem}[thm]{\protect\remarkname}
  \theoremstyle{remark}
  \newtheorem{notation}[thm]{\protect\notationname}
  \theoremstyle{remark}
  \newtheorem{claim}[thm]{\protect\claimname}
 \newlist{casenv}{enumerate}{4}
 \setlist[casenv]{leftmargin=*,align=left,widest={iiii}}
 \setlist[casenv,1]{label={{\itshape\ \casename} \arabic*.},ref=\arabic*}
 \setlist[casenv,2]{label={{\itshape\ \casename} \roman*.},ref=\roman*}
 \setlist[casenv,3]{label={{\itshape\ \casename\ \alph*.}},ref=\alph*}
 \setlist[casenv,4]{label={{\itshape\ \casename} \arabic*.},ref=\arabic*}
\theoremstyle{remark}    
\newtheorem*{claimi*}{Claim I} 
\theoremstyle{remark}    
\newtheorem*{claimii*}{Claim II} 
  \theoremstyle{plain}
  \newtheorem{cor}[thm]{\protect\corollaryname}
  \theoremstyle{plain}
  \newtheorem{assumption}[thm]{\protect\assumptionname}
  \theoremstyle{definition}
  \newtheorem{example}[thm]{\protect\examplename}
  \theoremstyle{remark}
  \newtheorem*{claim*}{\protect\claimname}
\newcounter{constructioncounter}
\theoremstyle{definition}
\newtheorem{construction}[constructioncounter]{Construction}
  \providecommand{\assumptionname}{Assumption}
  \providecommand{\claimname}{Claim}
  \providecommand{\conjecturename}{Conjecture}
  \providecommand{\corollaryname}{Corollary}
  \providecommand{\definitionname}{Definition}
  \providecommand{\examplename}{Example}
  \providecommand{\factname}{Fact}
  \providecommand{\lemmaname}{Lemma}
  \providecommand{\notationname}{Notation}
  \providecommand{\propositionname}{Proposition}
  \providecommand{\remarkname}{Remark}
 \providecommand{\casename}{Case}
\providecommand{\theoremname}{Theorem}
\begin{document}
\global\long\def\P{\mathbf{F}}
\global\long\def\p{\mathbf{f}}
\global\long\def\q{\mathbf{g}}
\global\long\def\SS{\mathcal{P}}
\global\long\def\C{\mathfrak{C}}
\global\long\def\Lim{\operatorname{Lim}}
\global\long\def\Ss{\mathbb{S}}
\global\long\def\cc{\mathbf{c}}
 \global\long\def\mod{\operatorname{mod}}
\global\long\def\pr{\operatorname{pr}}
\global\long\def\image{\operatorname{im}}
\global\long\def\otp{\operatorname{otp}}
\global\long\def\dec{\operatorname{dec}}
\global\long\def\suc{\operatorname{suc}}
\global\long\def\pre{\operatorname{pre}}
\global\long\def\qe{\operatorname{qe}}
 \global\long\def\ind{\operatorname{ind}}
\global\long\def\Nind{\operatorname{Nind}}
\global\long\def\lev{\operatorname{lev}}
\global\long\def\Suc{\operatorname{Suc}}
\global\long\def\HNind{\operatorname{HNind}}
\global\long\def\minb{{\lim}}
\global\long\def\concat{\mathcircumflex}
\global\long\def\cl{\operatorname{cl}}
\global\long\def\tp{\operatorname{tp}}
\global\long\def\id{\operatorname{id}}
\global\long\def\qf{\operatorname{qf}}
\global\long\def\ai{\operatorname{ai}}
\global\long\def\dtp{\operatorname{dtp}}
\global\long\def\acl{\operatorname{acl}}
\global\long\def\nb{\operatorname{nb}}
\global\long\def\limb{{\lim}}
\global\long\def\leftexp#1#2{{\vphantom{#2}}^{#1}{#2}}
\global\long\def\intr{\operatorname{interval}}
\global\long\def\atom{\emph{at}}
\global\long\def\I{\mathfrak{I}}
\global\long\def\uf{\operatorname{uf}}
\global\long\def\ded{\operatorname{ded}}
\global\long\def\Ded{\operatorname{Ded}}
\global\long\def\Df{\operatorname{Df}}
\global\long\def\eq{\operatorname{eq}}
\global\long\def\DfOne{\operatorname{Df}_{\operatorname{iso}}}
\global\long\def\modp#1{\pmod#1}
\global\long\def\sequence#1#2{\left\langle #1\left|\,#2\right.\right\rangle }
\global\long\def\set#1#2{\left\{  #1\left|\,#2\right.\right\}  }
\global\long\def\Diag{\operatorname{Diag}}
\global\long\def\Nn{\mathbb{N}}
\global\long\def\mathrela#1{\mathrel{#1}}
\global\long\def\twiddle{\mathord{\sim}}
\global\long\def\mathordi#1{\mathord{#1}}
\global\long\def\Tt{\mathcal{T}}

\title{A dependent theory with few indiscernibles}

\thanks{Part of the first author's PhD thesis. }

\thanks{The first author was partially supported by SFB grant 878. The second
author would like to thank the Israel Science Foundation for partial
support of this research (Grants no. 710/07 and 1053/11). No. 975
on the second author's list of publications.}

\author{Itay Kaplan and Saharon Shelah}
\begin{abstract}
We give a full solution to the question of existence of indiscernibles
in dependent theories by proving the following theorem: for every
$\theta$ there is a dependent theory $T$ of size $\theta$ such
that for all $\kappa$ and $\delta$, $\kappa\to\left(\delta\right)_{T,1}$
iff $\kappa\to\left(\delta\right)_{\theta}^{<\omega}$. This means
that unless there are good set theoretical reasons, there are large
sets with no indiscernible sequences. 
\end{abstract}
\maketitle

\section{Introduction}

Indiscernible sequences play a very important role in model theory.
Let us recall the definition.
\begin{defn}
Suppose $M$ is some structure, $A\subseteq M$, $\left(I,<\right)$
is some linearly ordered set, and $\alpha$ some ordinal. A sequence
$\bar{a}=\sequence{a_{i}}{i\in I}\in\left(M^{\alpha}\right)^{I}$
is called \emph{indiscernible over $A$} if for all $n<\omega$, every
increasing $n$-tuple from $\bar{a}$ realizes the same type over
$A$. When $A$ is omitted, it is understood that $A=\emptyset$. 
\end{defn}
A very important fact about indiscernible sequences is that they exist
in the following sense:
\begin{fact}
\label{fac:indiscernibles exist}\cite[Lemma 5.1.3]{TentZiegler}
Let $\left(I,<_{I}\right)$, $\left(J,<_{J}\right)$ be infinite linearly
ordered sets, $\alpha$ some ordinal, $M$ a structure and $A\subseteq M$.
Suppose $\bar{b}=\sequence{b_{j}}{j\in J}$ is some sequence of tuples
from $M^{\alpha}$. Then there exists an indiscernible sequence $\bar{a}=\sequence{a_{i}}{i\in I}$
of tuples of length $\alpha$ in some elementary extension $N$ of
$M$ such that:
\begin{itemize}
\item [$\star$]For any $n<\omega$ and formula $\varphi$, if $M\models\varphi\left(b_{j_{0}},\ldots,b_{j_{n-1}}\right)$
for every $j_{0}<_{J}\cdots<_{J}j_{n-1}$ from $J$ then $N\models\varphi\left(a_{i_{0}},\ldots,a_{i_{n-1}}\right)$
for every $i_{0}<_{I}\cdots<_{I}i_{n-1}$ from $I$. 
\end{itemize}
\end{fact}
This is proved using Ramsey and compactness. 

Sometimes, however, we want a stronger result. For instance we may
want that given any set of elements, there is an indiscernible sequence
in it. This gives rise to the following definition:
\begin{defn}
\label{def:arrow}Let $T$ be a complete first order theory, and let
$\C$ be a monster model of $T$ (i.e., a very big saturated model).
For a cardinal $\kappa$, $n\leq\omega$ and an ordinal $\delta$,
the notation $\kappa\to\left(\delta\right)_{T,n}$ means:
\begin{itemize}
\item [$\star$]For every set $A\subseteq\C^{n}$ of size $\kappa$, there
is a non-constant sequence of elements of $A$ of length $\delta$
which is indiscernible.
\end{itemize}
\end{defn}
This definition was suggested by Grossberg and Shelah in \cite[pg. 208, Definition 3.1(2)]{sh:d}
with a slightly different form%
\footnote{The definition there is: $\kappa\to\left(\delta\right)_{T,n}$ if
and only if for each sequence of length $\kappa$ (of $n$-tuples),
there is an indiscernible sub-sequence of length $\delta$. For us
there is no difference because we are dealing with examples where
$\kappa\not\to\left(\mu\right)_{T,n}$. It is also not hard to see
that when $\delta$ is an infinite cardinal these two definitions
are equivalent. %
}. 

As we remarked above, the mere existence of indiscernibles as in Fact
\ref{fac:indiscernibles exist} follows from Ramsey. It is therefore
no surprise that if a cardinal $\lambda$ enjoys a Ramsey-like property,
then for any countable theory $T$ we would have $\lambda\to\left(\omega\right)_{T,n}$. 

For a cardinal $\kappa$, denote by $\left[\kappa\right]^{<\omega}$
the set of all increasing finite sequences of ordinals below $\kappa$. 
\begin{defn}
For cardinals $\kappa,\theta$ and an ordinal $\delta$, the notation
$\kappa\to\left(\delta\right)_{\theta}^{<\omega}$ means:
\begin{itemize}
\item [$\star$]For every function $f:\left[\kappa\right]^{<\omega}\to\theta$
there is a homogeneous sub-sequence of order-type $\delta$ (i.e.,
there exists an increasing sequence $\left\langle \alpha_{i}\left|\, i<\delta\right.\right\rangle \in\leftexp{\delta}{\kappa}$
and $\left\langle c_{n}\left|\, n<\omega\right.\right\rangle \in\leftexp{\omega}{\theta}$
such that $f\left(\alpha_{i_{0}},\ldots,\alpha_{i_{n-1}}\right)=c_{n}$
for every $i_{0}<\cdots<i_{n-1}<\delta$). 
\end{itemize}
\end{defn}
\begin{prop}
\label{prop:easy direction intro} Let $\kappa,\theta$ be cardinals
and $\delta\geq\omega$ a limit ordinal. If $\kappa\to\left(\delta\right)_{\theta}^{<\omega}$
then for every $n\leq\omega$ and every theory $T$ of cardinality
$\left|T\right|\leq\theta$, $\kappa\to\left(\delta\right)_{T,n}$.
\end{prop}
This will be proved below, see Proposition \ref{prop:Easy direction}. 
\begin{defn}
For an ordinal $\alpha$, the Erd\"os cardinal $\kappa\left(\alpha\right)$
is the least non-zero cardinal $\lambda$ such that $\lambda\to\left(\alpha\right)_{2}^{<\omega}$. 
\end{defn}
The cardinal $\kappa\left(\alpha\right)$ may not always exist, indeed,
it depends on the model of ZFC we are in. 
\begin{fact}
\label{fac:kanamori}\cite[Proposition 7.15]{Kanamori} Suppose $\alpha\geq\omega$
is a limit ordinal, then:
\begin{enumerate}
\item For any $\gamma<\kappa\left(\alpha\right)$, $\kappa\left(\alpha\right)\to\left(\alpha\right)_{\gamma}^{<\omega}$.
\item $\kappa\left(\alpha\right)$ is an uncountable strongly inaccessible
cardinal. 
\end{enumerate}
\end{fact}
In \cite[pg. 209]{sh:d} it is proved that there is a countable simple
unstable theory such that for a limit ordinal $\delta\geq\omega$,
if $\kappa\to\left(\delta\right)_{T,1}$ then $\kappa\to\left(\delta\right)_{2}^{<\omega}$.
It is also very easy to find such a theory with the property that
if $\kappa\to\left(\delta\right)_{T,1}$ then $\kappa\to\left(\delta\right)_{\omega}^{<\omega}$
($T$ would be the model completion of the empty theory in the language
$\set{R_{n,m}}{n,m<\omega}$ where $R_{n,m}$ is an $n$-ary relation).

There it is conjectured that in dependent (NIP) theories (see Definition
\ref{def:nip} below), such phenomenon cannot happen:
\begin{conjecture}
\label{conj:existence of indiscernibles}\cite[pg. 209, Conjecture 3.3]{sh:d}
If $T$ is dependent, then for every cardinal $\mu$ there is some
cardinal $\lambda$ such that $\lambda\to\left(\mu\right)_{T,1}$.
\end{conjecture}
By Proposition \ref{prop:easy direction intro}, if $\kappa\left(\mu\right)$
exists then Conjecture \ref{conj:existence of indiscernibles} holds
for $\mu$ and every theory $T$ (regardless of NIP) with $\left|T\right|<\kappa\left(\mu\right)$.

In stable theories, Conjecture \ref{conj:existence of indiscernibles}
holds in any model of ZFC: 
\begin{fact}
For any $\lambda$ satisfying $\lambda=\lambda^{\left|T\right|}$,
$\lambda^{+}\to\left(\lambda^{+}\right)_{T,n}$. 
\end{fact}
This was proved by Shelah (See \cite{Sh:c}), and follows from local
character of non-forking. 

Conjecture \ref{conj:existence of indiscernibles} also holds in strongly
dependent%
\footnote{For more on strongly dependent theories, see Section \ref{sec:strongly dependent}. %
} theories: 
\begin{fact}
\label{fac:TStrongly} \cite{Sh863} If $T$ is strongly dependent,
then for all $\lambda\geq\left|T\right|$, $\beth_{\left|T\right|^{+}}\left(\lambda\right)\to\left(\lambda^{+}\right)_{T,n}$
for all $n<\omega$.
\end{fact}
Conjecture \ref{conj:existence of indiscernibles} is connected to
a result by Shelah and Cohen: in \cite{ShCo919}, they proved that
a theory is stable iff it can be presented in some sense in a free
algebra\textcolor{black}{{} with a fixed vocabulary, allowing function
symbols with infinite arity.} If this result could be extended to
saying that a theory is dependent iff it can be represented as an
algebra with ordering, then this could be used to prove Conjecture
\ref{conj:existence of indiscernibles}. 

In the previous paper \cite[Theorem 2.11]{KaSh946}, we have shown
that:
\begin{thm}
\label{thm:MainThm946}There exists a countable dependent theory $T$
such that:

For any two cardinals $\mu\leq\kappa$ with no uncountable strongly
inaccessible cardinals in $\left[\mu,\kappa\right]$, $\kappa\not\to\left(\mu\right)_{T,1}$.
\end{thm}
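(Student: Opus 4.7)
My plan is to exhibit a single countable dependent theory $T$ and, for each pair $(\mu,\kappa)$ with no strongly inaccessible in $[\mu,\kappa]$, to construct a set $A\subseteq\C$ of size $\kappa$ admitting no non-constant $\mu$-indiscernible sub-sequence. A natural template for $T$ is a colored tree: take as language a tree partial order, a meet $\wedge$, a ``level'' predicate, and countably many refining unary predicates. Dependence is then verified by a routine quantifier-elimination argument, since trees and their standard expansions are classical NIP examples; the real work is the combinatorial construction of $A$ inside a saturated model.

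The combinatorial content of the ``no inaccessible'' hypothesis is that every admissible $\kappa$ is reachable from $\mu$ by the operations of successor, union of fewer than $\kappa$ pieces of smaller cardinality, or ``$2^\lambda\ge\kappa$ for some $\lambda<\kappa$''. I would therefore build $A=A_\kappa$ by induction on such $\kappa$. The base case $\kappa=\mu$ takes $\mu$ distinct elements arranged so that the distribution of levels and colors already breaks order-indiscernibility. The successor case $\kappa=\lambda^+$ refines each $a\in A_\lambda$ into $\lambda$ children by adding a new level. The singular-limit case $\operatorname{cf}(\kappa)<\kappa$ places the previously built $A_{\kappa_\alpha}$ on pairwise incomparable subtrees, so that regularity of $\mu$ confines any hypothetical $\mu$-indiscernible to a single piece. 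The regular-limit case, where some $\lambda<\kappa$ has $2^\lambda\ge\kappa$, indexes $A$ by distinct branches of a binary subtree of height $\lambda$, using the colors on meets to distinguish tuples. Because the qf-type of an $n$-tuple in such a tree is determined by the finite tree they generate together with the levels and colors of the meets, the non-indiscernibility assertion reduces at every stage to arranging that $(i,j)\mapsto\tp(a_i\wedge a_j)$ does not stabilize along any candidate length-$\mu$ sub-sequence.

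The main obstacle, as I see it, is the tension between two opposing demands: $T$ must be rich enough that the induced coloring of pairs (and more generally of finite tuples) from $A_\kappa$ has no monochromatic $\mu$-subset, yet $T$ must remain NIP, which sharply constrains the complexity of its definable sets. My expectation is that the tree/meet/level primitives alone suffice, since the qf-type of a finite tuple in a colored tree is already a finite piece of combinatorial data, and the same fixed ambient NIP structure accommodates the full induction. The most delicate point should be the regular non-inaccessible step, where one must inject $\kappa$ into $2^\lambda$ in a way that actually transfers the ``no $\mu$-monochromatic'' property from height-$\lambda$ data to length-$\mu$ potential indiscernibles; simultaneously ensuring that the construction is uniform in $\kappa$ (so that one fixed $T$ works for all admissible pairs) is where I expect the real bookkeeping to live.
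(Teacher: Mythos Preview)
The proposal has the right high-level architecture (induction on $\kappa$, split into singular and regular-non-inaccessible cases), but the choice of theory is where the argument breaks down, and with it both inductive steps.

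Your theory --- one tree with meet, a level predicate, and countably many unary colour predicates --- gives no mechanism for \emph{nesting} the previously constructed witnesses inside the new one. In the paper's construction the language has, for each node $\eta$ of an infinite base tree $S=\leftexp{\omega>}2$, a separate tree $(P_\eta,<_\eta)$ together with \emph{regressive functions} $G_{\eta,\eta\concat\langle i\rangle}:P_\eta\to P_{\eta\concat\langle i\rangle}$. The point of these functions is precisely the bookkeeping you flag as the hard part: at each inductive step one places the earlier witness sets $A_\lambda$ inside deeper $P_\nu$'s and then defines $G$ so that any putative indiscernible sequence in $P_{\langle\rangle}$ is \emph{pushed}, via a definable term, into one of those earlier sets. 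Without such functions (or something playing the same role) your tree only records colours of meets, and the qf-type of a tuple from your $A_\kappa$ carries no trace of the inductively built $A_\lambda$'s; the ``no $\mu$-indiscernible'' property simply does not transfer upward.

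Concretely, both of your limit cases fail as written. In the singular case you say ``regularity of $\mu$ confines any $\mu$-indiscernible to a single piece'', but $\mu$ is not assumed regular, and even for regular $\mu$ an indiscernible sequence can visit cofinally many pieces if there is no definable way to name the piece an element lies in. The paper handles this by letting $G_{\langle\rangle,\langle 0\rangle}$ send each element to the index of its piece (living in a copy of the witness for $\sigma=\mathrm{cf}(\kappa)$) and $G_{\langle\rangle,\langle 1\rangle}$ send it into the piece itself; indiscernibility then forces the first image to be constant, hence the sequence lands in one $A_{\lambda_i}$. In the regular case you correctly index by branches of $\leftexp{\lambda\geq}2$, but the contradiction is obtained not from ``colours on meets'' alone: one shows the sequence of meets must itself be increasing or constant, and in the increasing case applies $G$ to the successors of the meets to produce a non-constant indiscernible in the witness for $\lambda$. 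Finally, your separate successor step $\kappa=\lambda^+$ is redundant (it is an instance of $2^\lambda\ge\kappa$), and the base case needs $\mu$ many \emph{parameters} (the constants $c_{\eta,i}$ in the paper), not an arrangement of levels.
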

Thus, if $V$ is a model of ZFC without strongly inaccessible cardinals,
then Conjecture \ref{conj:existence of indiscernibles} fails in $V$
(so this conjecture is false in general). Still, one might hope that
this is the only restriction. However, we show that in fact one needs
Erd\"os cardinals to exist. Namely, we show that there is a dependent
theory, of any given cardinality, such that the only reason for which
Conjecture \ref{conj:existence of indiscernibles} could hold for
it is Proposition \ref{prop:easy direction intro}, thus getting the
best possible result. 
\begin{main_thm}
\label{mainthmA}For every $\theta$ there is a dependent theory $T$
of size $\theta$ such that for all cardinals $\kappa$ and limit
ordinals $\delta\geq\omega$, \textup{$\kappa\to\left(\delta\right)_{T,1}$
iff $\kappa\to\left(\delta\right)_{\theta}^{<\omega}$. }
\end{main_thm}
Note that by Fact \ref{fac:kanamori}, Main Theorem \ref{mainthmA}
is a generalization of Theorem \ref{thm:MainThm946}. 

It was unknown to us that in 2011 Kuda{\u\i}bergenov proved a related
result, which refutes a strong version of Conjecture \ref{conj:existence of indiscernibles},
namely that $\beth_{\omega+\omega}\left(\mu+\left|T\right|\right)\to\left(\mu\right)_{T,1}$.
He proved that for every ordinal $\alpha$ there exists a dependent
theory (we have not checked whether it is strongly dependent) $T_{\alpha}$
such that $\left|T_{\alpha}\right|=\left|\alpha\right|+\aleph_{0}$
and $\beth_{\alpha}\left(\left|T_{\alpha}\right|\right)\not\to\left(\aleph_{0}\right)_{T_{\alpha},1}$
and thus seem to indicate that the bound in Fact \ref{fac:TStrongly}
is tight. See \cite{russianConjecture}. 

We would like to thank the anonymous referee for his very careful
reading and many useful remarks.

\subsection{The idea of the proof}

The theory $T$ is a ``tree of trees'' with functions between the
trees. More precisely, for all $\eta$ in the base tree $\Ss=2^{<\omega}$
we have a unary predicate $P_{\eta}$ and an ordering $<_{\eta}$
such that $\left(P_{\eta},<_{\eta}\right)$ is a discrete tree. In
addition we will have functions $G_{\eta,\eta\concat\left\{ i\right\} }:P_{\eta}\to P_{\eta\concat\left\langle i\right\rangle }$
for $i=0,1$. The idea is to prove that if $\kappa\not\to\left(\delta\right)_{\theta}^{<\omega}$
then $\kappa\not\to\left(\delta\right)_{T,1}$ by induction on $\kappa$,
i.e., to prove that we can find a subset of $P_{\left\langle \right\rangle }$
of size $\kappa$ without an indiscernible sequence in it. For $\kappa$
regular but not strongly inaccessible or $\kappa$ singular the proof
is similar to the one in \cite{KaSh946}: we just push our previous
examples into deeper levels.

The main case is when $\kappa$ is strongly inaccessible.

We have a function $\cc$ that witnesses that $\kappa\not\to\left(\delta\right)_{\theta}^{<\omega}$
and we build a model $M_{\cc}$. In this model, the base tree will
be $\omega$ and not $2^{<\omega}$, i.e., for each $n<\omega$ we
have a predicate $P_{n}$ with tree-ordering $<_{n}$ and functions
$G_{n}:P_{n}\to P_{n+1}$. In addition, $P_{0}\subseteq\kappa$. On
$P_{n}$ we will define an equivalence relation $E_{n}$ refining
the neighboring relation ($x,y$ are neighbors if they succeed the
same element) so that every class of neighbors (neighborhood) is a
disjoint union of less than $\kappa$ many classes of $E_{n}$. We
will prove that if there are indiscernibles in $P_{0}$, then there
is some $n<\omega$ such that in $P_{n}$ we get an indiscernible
sequence $\left\langle t_{i}\left|\, i<\delta\right.\right\rangle $
that looks like a fan, i.e., there is some $u$ such that $t_{i}\wedge t_{j}=u$
and $t_{i}$ is the successor of $u$, and in addition $t_{i}$ and
$t_{j}$ are not $E_{n}$ equivalent for $i\neq j$. 

Now embed $M_{\cc}$ into a model of our theory (i.e., now the base
tree is again $2^{<\omega}$), and in each neighborhood we send every
$E_{n}$ class to an element from the model we get from the induction
hypothesis (as there are less than $\kappa$ many classes, this is
possible).

By induction, we get there is no indiscernible sequence in $P_{0}$
and finish.

\subsection{Description of the paper}

In Section \ref{sec:Preliminaries} we give some preliminaries on
dependent and strongly dependent theories and trees. In Section \ref{sec:Construction}
we describe the theory and prove quantifier elimination and dependence.
In Section \ref{sec:The-inaccessible-case} we deal with the main
technical obstacle, namely the inaccessible case. 

In Section \ref{sec:The-main-theorem} we prove the main theorem.
In Section \ref{sec:strongly dependent} we give a parallel result
for $\omega$-tuples in strongly dependent theories.

\section{\label{sec:Preliminaries}Preliminaries}

\subsection*{Notation}

We use standard notation. $a,b,c$ are elements, and $\bar{a},\bar{b},\bar{c}$
are finite or infinite tuples of elements. 

$\C$ will be the monster model of the theory (i.e., a very big, saturated
model).

For a set $A\subseteq\C$, $S_{n}\left(A\right)$ is the set of complete
$n$-types over $A$, and $S_{n}^{\qf}\left(A\right)$ is the set
of all quantifier free complete $n$-types over $A$. For a finite
set of formulas with a partition of variables, $\Delta\left(\bar{x};\bar{y}\right)$,
$S_{\Delta\left(\bar{x};\bar{y}\right)}\left(A\right)$ is the set
of all $\Delta$-types over $A$, i.e., maximal consistent subsets
of $\left\{ \varphi\left(\bar{x},\bar{a}\right),\neg\varphi\left(\bar{x},\bar{a}\right)\left|\,\varphi\left(\bar{x},\bar{y}\right)\in\Delta\,\&\,\bar{a}\in A^{\lg\left(\bar{y}\right)}\right.\right\} $.
Similarly we define $\tp_{\Delta\left(\bar{x};\bar{y}\right)}\left(\bar{b}/A\right)$
as the set of formulas $\varphi\left(\bar{x},\bar{a}\right)$ such
that $\varphi\left(\bar{x},\bar{y}\right)\in\Delta$ and $\C\models\varphi\left(\bar{b},\bar{a}\right)$. 

When $\alpha$ and $\beta$ are ordinals, we use left exponentiation
$\leftexp{\beta}{\alpha}$ to denote the set of functions from $\beta$
to $\alpha$, as to not to confuse with ordinal (or cardinal) exponentiation.
If there is no room for confusion, and $A$ and $B$ are some sets
we use $A^{B}$ instead. The set $\alpha^{<\beta}$ is the set of
sequences (functions) $\bigcup\set{\leftexp{\gamma}{\alpha}}{\gamma<\beta}$.
Similarly, for a set $A$, $A^{<\beta}=\bigcup\set{A^{\gamma}}{\gamma<\beta}$. 

For a sequence $\bar{s}$ (finite or infinite), we denote by $\lg\left(\bar{s}\right)$
its length. If $f$ is a function from some ordinal $\alpha$, then
$\lg\left(f\right)=\alpha$.

\subsection*{Dependent theories}

For completeness, we give here the definitions and basic facts we
need on dependent theories.
\begin{defn}
\label{def:nip}A first order theory $T$ is \emph{dependent} (sometimes
also NIP) if it does not have the independence property: there is
no formula $\varphi\left(\bar{x},\bar{y}\right)$ and tuples $\left\langle \bar{a}_{i},\bar{b}_{s}\left|\, i<\omega,s\subseteq\omega\right.\right\rangle $
from $\C$ such that $\C\models\varphi\left(\bar{a}_{i},\bar{b}_{s}\right)$
iff $i\in s$.

We recall the following fact, which is a consequence of both the so-called
Sauer-Shelah lemma (apparently first proved by Vapnik and Chervonenkis,
then rediscovered by Sauer and again by Shelah in the model theoretic
setting, more or less at the same time) and the fact that if a theory
has the independence property then there is a formula $\varphi\left(x,\bar{y}\right)$
with $\lg\left(x\right)=1$ that witnesses this:\end{defn}
\begin{fact}
\label{fac:DepPolBd} \cite[II, 4]{Sh:c} Let $T$ be any theory.
Then for all $n<\omega$, $T$ is dependent if and only if $\square_{n}$
if and only if $\square_{1}$ where for all $n<\omega$,
\begin{itemize}
\item [$\square_n$] For every finite set of formulas $\Delta\left(\bar{x},\bar{y}\right)$
with $n=\lg\left(\bar{x}\right)$, there is a polynomial $f$ over
$\Nn$ such that for every finite set $A\subseteq M\models T$, $\left|S_{\Delta}\left(A\right)\right|\leq f\left(\left|A\right|\right)$.
\end{itemize}
\end{fact}

\subsection*{Strongly dependent theories}

In \cite{Sh863,Sh783}, the author asks what is a possible solution
to the equation dependent / $x$ = stable / superstable. There, he
discusses several possible strengthening of NIP, namely strongly$^{l}$
dependent theories for $l=1,2,3,4$. These are subclasses of dependent
theories and each one refines the previous one. Strongly$^{1}$ dependent
theories are usually just called strongly dependent, and strongly$^{2}$
theories are sometimes called strongly$^{+}$ theories. These two
classes and related notions (such as dp-rank) were studied much more
than the other two, so we will not mention strongly$^{3}$ or strongly$^{4}$
dependent theories. For instance, strongly$^{2}$ dependent groups
are discussed in \cite{Sh993}. The theories of the reals and the
$p$-adics are both strongly dependent, but neither is strongly$^{2}$
dependent. 

Here are the definitions:
\begin{defn}
A theory $T$ is said to be \uline{not}\emph{ strongly dependent}
if there exists a sequence of formulas $\left\langle \varphi_{i}\left(\bar{x},\bar{y}_{i}\right)\right\rangle $
(where $\bar{x},\bar{y}_{i}$ are tuples of variables), an array $\left\langle \bar{a}_{i,j}\left|\, i,j<\omega\right.\right\rangle $
in $\C$ (where $\lg\left(\bar{a}_{i,j}\right)=\lg\left(\bar{y}_{i}\right)$)
and tuples $\left\langle \bar{b}_{\eta}\left|\,\eta:\omega\to\omega\right.\right\rangle $
($\lg\left(\bar{b}_{\eta}\right)=\lg\left(\bar{x}\right)$) in $\C$
such that $\models\varphi_{i}\left(\bar{b}_{\eta},\bar{a}_{i,j}\right)\Leftrightarrow\eta\left(i\right)=j$.
\end{defn}

\begin{defn}
\label{def:strongly2 dependent}A theory $T$ is said to be \uline{not}\emph{
strongly$^{2}$ dependent} if there exists a sequence of formulas
$\left\langle \varphi_{i}\left(\bar{x},\bar{y}_{i},\bar{y}_{i-1},\ldots,\bar{y}_{0}\right)\left|\, i<\omega\right.\right\rangle $,
an array $\left\langle \bar{a}_{i,j}\left|\, i,j<\omega\right.\right\rangle $
in $\C$ (where $\lg\left(\bar{a}_{i,j}\right)=\lg\left(\bar{y}_{i}\right)$)
and tuples $\left\langle \bar{b}_{\eta}\left|\,\eta:\omega\to\omega\right.\right\rangle $
($\lg\left(\bar{b}_{\eta}\right)=\lg\left(\bar{x}\right)$) in $\C$
such that $\models\varphi_{i}\left(\bar{b}_{\eta},\bar{a}_{i,j},\bar{a}_{i-1,\eta\left(i-1\right)},\ldots,\bar{a}_{0,\eta\left(0\right)}\right)\Leftrightarrow\eta\left(i\right)=j$.
\end{defn}
See \cite[Claim 2.9]{Sh863} for more details. 

We will use the following criterion:
\begin{lem}
\label{lem:strong dep. pol bound} Suppose $T$ is a theory such that
for every number $n<\omega$ there exists some number $N_{n}<\omega$
such that for every finite set of formulas $\Delta\left(\bar{x},\bar{y}\right)$
with $n=\lg\left(\bar{x}\right)$, there is a polynomial $f$ over
$\Nn$ of degree $\leq N_{n}$ such that for every finite set $A\subseteq M\models T$,
$\left|S_{\Delta}\left(A\right)\right|\leq f\left(\left|A\right|\right)$.
Then $T$ is strongly$^{2}$ dependent.\end{lem}
\begin{proof}
Suppose not, then by Definition \ref{def:strongly2 dependent}, we
have a sequence of formulas $\left\langle \varphi_{i}\left(\bar{x},\bar{y}_{i},\bar{y}_{i-1},\ldots,\bar{y}_{0}\right)\left|\, i<\omega\right.\right\rangle $
and an array $\sequence{\bar{a}_{i,j}}{i,j<\omega}$. Suppose $N=N_{\lg\left(\bar{x}\right)}<K<\omega$.
Let $l$ be a bound on $\lg\left(\bar{a}_{i,j}\right)$ for $i<K$,
and for $j<\omega$ let $A_{j}=\bigcup\set{\bigcup\bar{a}_{i,j'}}{i<K,j'<j}$.
Let $\Delta\left(\bar{x};\bar{y}\right)=\set{\varphi_{i}\left(\bar{x},\bar{y}_{i},\ldots,\bar{y}_{0}\right)}{i<K}$.
So the number of $\Delta$-types over $A_{j}$ is at least $j^{K}$
(as the number of functions $\eta:K\to j$). By assumption, $\left|S_{\Delta}\left(A_{j}\right)\right|\leq c\cdot\left|A_{j}\right|^{N}\leq c\cdot\left(l\cdot j\cdot K\right)^{N}$
for some $c\in\Nn$. But for big enough $j$, $c\cdot\left(l\cdot j\cdot K\right)^{N}<j^{K}$
--- contradiction. 
\end{proof}

\subsection*{Trees}

Let us remind the reader of the basic definitions and properties of
trees.
\begin{defn}
A \emph{tree} is a partially ordered set $\left(A,<\right)$ such
that for all $a\in A$, the set $A_{<a}=\left\{ x\left|\, x<a\right.\right\} $
is linearly ordered.
\end{defn}

\begin{defn}
We say that a tree $A$ is \emph{well ordered} if $A_{<a}$ is well
ordered for every $a\in A$. Assume now that $A$ is well ordered.
\begin{itemize}
\item For every $a\in A$, denote $\lev\left(a\right)=\otp\left(A_{<a}\right)$
--- the \emph{level} of $a$ is the order type of $A_{<a}$.
\item The \emph{height} of $A$ is $\sup\left\{ \lev\left(a\right)\left|\, a\in A\right.\right\} $
.
\item $a\in A$ is a \emph{root} if it is minimal.
\item $A$ is \emph{normal} when for all limit ordinals $\delta$, and for
all $a,b\in A$, if 1) $\lev\left(a\right)=\lev\left(b\right)=\delta$,
and 2) $A_{<a}=A_{<b}$, then $a=b$.
\item If $a<b$ then we denote by $\suc\left(a,b\right)$ the \emph{successor}
of $a$ in the direction of $b$, i.e., $\min\left\{ c\leq b\left|\, a<c\right.\right\} $.
\item We write $a<_{\suc}b$ if $b=\suc\left(a,b\right)$.
\item We call $A$ \emph{standard} if it is well ordered, normal, and has
a root.
\end{itemize}
\end{defn}
For a standard tree $\left(A,<\right)$, define $a\wedge b=\max\left\{ c\left|\, c\leq a\,\&\, c\leq b\right.\right\} $.

\section{\label{sec:Construction}Construction of the theory}

In this section we shall introduce the theory $T_{S}$, attached to
a standard tree $S$. Then, for $\Ss=2^{<\omega}$, this theory (or
a variant of it, given by adding constants) will be the theory that
will exemplify Main Theorem \ref{mainthmA}. 

In the first part, we construct the theory $T_{S}^{\forall}$ which
is universal (i.e., all its axioms are of the form $\forall x\varphi$
where $\varphi$ is quantifier free). As we said in the introduction,
the idea is that for every $\eta\in S$, we have a predicate $P_{\eta}$,
and whenever $\eta_{1}<_{\suc}\eta_{2}$ there is a function from
$P_{\eta_{2}}$ to $P_{\eta_{1}}$. Then we would like to take a model
completion $T_{S}$ of this theory (see below). If we put no further
restriction on the theory $T_{S}^{\forall}$, this is easily done
(using AP and JEP, see below), and the model completion will be the
theory of dense trees with functions (if $S$ is a finite tree, then
it is even $\omega$-categorical). This is what we did in \cite[Theorem 2.11]{KaSh946},
but this does not seem to suffice to deal with inaccessible cardinals.
For that reason we further complicate the theory by making the trees
discrete, adding successors and predecessors. This require some constraint
on the functions involved --- ``regressiveness'' --- which is needed
for quantifier elimination. 

Recall that for a given (first order) theory $T$ in a language $L$,
a \emph{model companion} of $T$ is another theory $T'$ in $L$ such
that every model of $T$ can be embedded in a model of $T'$ and vice
versa and in addition $T'$ is \emph{model complete}, i.e., if $M_{1}$
is a substructure of $M_{2}$ and $M_{1},M_{2}\models T'$ then $M_{1}$
is an elementary substructure of $M_{2}$. A model companion of a
theory is unique if it exists. A model companion $T'$ is called a\emph{
model completion} when for every model $M$ of $T$, $T'\cup\Diag_{\qf}\left(M\right)$
is complete ($\Diag_{\qf}\left(M\right)$ is the theory in the language
$L\cup\set{c_{a}}{a\in M}$ that contains all atomic formulas that
hold in $M$). If $T'$ is a model completion of $T$ and $T$ is
universal, then $T'$ eliminates quantifiers for non-sentences. If
in addition $T$ has JEP (see below) then $T'$ is complete. For more,
see e.g., \cite{Hod}. 

A theory $T$ has the \emph{joint embedding property (JEP)} if given
any two models $A$, $B$ of $T$, there is a model $C$ and embeddings
$f:A\to C$, $g:B\to C$.

A theory $T$ has the \emph{amalgamation property (AP)} if given any
three models $A,B$ and $C$ of $T$, and embeddings $f:A\to B$,
$g:A\to C$, there is a model $D$ and embeddings $h:B\to D$, $i:C\to D$
such that $h\circ f=i\circ g$. 

By \cite[Theorem 7.4.1]{Hod}, if a universal theory $T$ in a finite
language is uniformly locally finite (i.e., there is a function $f:\omega\to\omega$
such that for all $M\models T$ and finite $A\subseteq M$, the size
of the structure generated by $A$ is $f\left(\left|A\right|\right)$)
and has AP and JEP, then it has a model completion $T'$ which is
also $\omega$-categorical (this is related to Fra\"iss\'e limits).
In \cite[Theorem 2.11]{KaSh946} we used exactly this criterion to
construct the model completion. Here, however, substructures are not
finite (since we have the successor function), so we cannot apply
this theorem. 

Instead, we show that the the class of existentially closed models
of $T_{S}^{\forall}$ is elementary (recall that a model $M$ of a
theory $T$ is an \emph{existentially closed model of $T$} if for
any extension $N\supseteq M$ such that $N\models T$, every quantifier
free formula $\varphi\left(x\right)$ over $M$ that has a realization
in $N$ has one in $M$). In fact we show that every two existentially
closed models of $T_{S}^{\forall}$ are elementary equivalent (this
uses the fact that $T_{S}^{\forall}$ has JEP). We call their theory
$T_{S}$. In the process we show that $T_{S}$ also eliminates quantifiers.
Thus, this is the model completion of $T_{S}^{\forall}$. 

In the second part, we show that $T_{S}$ is dependent, and that if
$S$ is finite then it is strongly$^{2}$ dependent (using Lemma \ref{lem:strong dep. pol bound}
and quantifier elimination).

Finally we add constants to the language so that its cardinality will
be $\theta$, and call the resulting theory $T_{S}^{\theta}$.

\subsection*{The first order theory}

\uline{The language:}

Let $S$ be a standard tree, and let $L_{S}$ be the language:
\[
\left\{ P_{\eta},<_{\eta},\wedge_{\eta},G_{\eta_{1},\eta_{2}},\suc_{\eta},\pre_{\eta},\limb_{\eta}\left|\,\eta,\eta_{1},\eta_{2}\in S,\eta_{1}<_{\suc}\eta_{2}\right.\right\} .
\]
Where:

$P_{\eta}$ is a unary predicate, $<_{\eta}$ is a binary relation
symbol, $\wedge_{\eta}$ and $\suc_{\eta}$ are binary function symbols,
$G_{\eta_{1},\eta_{2}}$, $\pre_{\eta}$ and $\limb_{\eta}$ are unary
function symbols.
\begin{defn}
Let $L_{S}'=L_{S}\backslash\set{\pre_{\eta},\suc_{\eta}}{\eta\in S}$. 
\end{defn}
\uline{The theory:}
\begin{defn}
\label{def:T_S} The theory $T_{S}^{\forall}$ says:
\begin{itemize}
\item $\left(P_{\eta},<_{\eta}\right)$ is a tree.
\item $\eta_{1}\neq\eta_{2}\Rightarrow P_{\eta_{1}}\cap P_{\eta_{2}}=\emptyset$.
\item $\wedge_{\eta}$ is the meet function: $x\wedge_{\eta}y=\max\left\{ z\in P_{\eta}\left|\, z\leq_{\eta}x\,\&\, z\leq_{\eta}y\right.\right\} $
for $x,y\in P_{\eta}$ (so its existence is part of the theory).
\item $\suc_{\eta}$ is the successor function --- for $x,y\in P_{\eta}$
with $x<_{\eta}y$, $\suc_{\eta}\left(x,y\right)$ is the successor
of $x$ in the direction of $y$. The axioms are:

\begin{itemize}
\item $\forall x<_{\eta}y\left(x<_{\eta}\suc_{\eta}\left(x,y\right)\leq_{\eta}y\right)$,
and
\item $\forall x\leq_{\eta}z\leq_{\eta}\suc_{\eta}\left(x,y\right)\left[z=x\vee z=\suc_{\eta}\left(x,y\right)\right]$.
\end{itemize}
\item $\limb_{\eta}\left(x\right)$ is the greatest limit element below
$x$. Formally,

\begin{itemize}
\item $\minb_{\eta}:P_{\eta}\to P_{\eta}$, $\forall x\limb_{\eta}\left(x\right)\leq_{\eta}x$,
$\forall x<_{\eta}y\left(\limb_{\eta}\left(x\right)\leq_{\eta}\limb_{\eta}\left(y\right)\right)$,
\item $\forall x<_{\eta}y\left(\limb_{\eta}\left(\suc_{\eta}\left(x,y\right)\right)=\limb_{\eta}\left(x\right)\right)$,
$\forall x\limb_{\eta}\left(\limb_{\eta}\left(x\right)\right)=\limb_{\eta}\left(x\right)$.
\end{itemize}
\item Let the successor elements be those $x$'s such that $\limb_{\eta}\left(x\right)<_{\eta}x$,
and denote 
\[
\Suc\left(P_{\eta}\right)=\left\{ x\in P_{\eta}\left|\,\limb_{\eta}\left(x\right)<_{\eta}x\right.\right\} .
\]

\item $\pre_{\eta}$ is the immediate predecessor function from $\Suc\left(P_{\eta}\right)$
to $P_{\eta}$ ---

$\forall x\neq\minb_{\eta}\left(x\right)\left(\pre_{\eta}\left(x\right)<x\wedge\suc_{\eta}\left(\pre_{\eta}\left(x\right),x\right)=x\right)$.

\item (\textbf{regressiveness}) If $\eta_{1}<_{\suc}\eta_{2}$ then $G_{\eta_{1},\eta_{2}}$
satisfies: $G_{\eta_{1},\eta_{2}}:\Suc\left(P_{\eta_{1}}\right)\to P_{\eta_{2}}$
and if $x<_{\eta_{1}}y$, both $x$ and $y$ are successors, and $\limb_{\eta}\left(x\right)=\limb_{\eta}\left(y\right)$,
then $G_{\eta_{1},\eta_{2}}\left(x\right)=G_{\eta_{1},\eta_{2}}\left(y\right)$. 
\item In all the axioms above, for elements or pairs outside of the domain
of any of the functions $\wedge_{\eta},\lim_{\eta},G_{\eta_{1},\eta_{2}},\suc_{\eta}$
or $\pre_{\eta}$, these functions are the identity on the leftmost
coordinate, so for example if $\left(x,y\right)\notin P_{\eta}^{2}$,
then $x\wedge_{\eta}y=x$. 
\end{itemize}
\end{defn}
\begin{rem}
We need the regressiveness axiom so that $T_{S}^{\forall}$ would
have a model completion. Indeed, suppose $S=\left\{ 0,1\right\} $
and we remove this axiom, and suppose that $T$ is a model completion
of $T_{S}^{\forall}$. Then every model of $T$ is an existentially
closed model of $T_{S}^{\forall}$. Suppose $M\models T$ and $a<_{0}^{M}b\in\Suc\left(P_{0}^{M}\right)$.
Then if $b$ is greater than $\suc_{0}^{M}\left(\cdots\left(\suc_{0}^{M}\left(a,b\right)\right)\right)$
for every finite number of compositions then there is some $a<_{0}^{M}c<_{0}^{M}b$
in $M$ such that $G_{0,1}^{M}\left(c\right)\neq G_{0,1}^{M}\left(b\right)$
(because there is an extension of $M$ to a model of $T_{S}^{\forall}$
where such a $c$ exists). So by compactness there is some $n$ such
that for every model $M\models T$ and every $a<_{0}^{M}b\in\Suc\left(P_{0}^{M}\right)$,
if $b$ is greater than $n$ successors of $a$, then there is some
$c$ with $a<_{0}^{M}c<_{0}^{M}b$ and $G_{0,1}^{M}\left(c\right)\neq G_{0,1}^{M}\left(b\right)$.
But there is a model $M'$ of $T_{S}^{\forall}$ with some $a<_{0}^{M'}b$
such that $b$ is the $\left(n+1\right)$'th successor of $a$ and
$G_{0,1}^{M'}$ is constant on the interval $\left(a,b\right]$. Since
every model of $T_{S}^{\forall}$ can be extended to a model of $T$
this is a contradiction. 
\end{rem}

\subsection*{Model completion}

Here we will prove the existence of the model completion $T_{S}$
of $T_{S}^{\forall}$.
\begin{notation}
\label{nota:substructures of trees}If $S_{1},S_{2}$ are standard
trees, we shall treat them as structures in the language $\left\{ <_{\suc},<\right\} $,
so when we write $S_{1}\subseteq S_{2}$, we mean that $S_{1}$ is
a substructure of $S_{2}$ in this language (which means that if $b$
is the successor of $a$ in $S_{1}$, it remains such in $S_{2}$).

When $M$ is a model of $T_{S}$, we may write $<$, $\suc$, etc.
instead of $\suc_{\eta}$, $<_{\eta}$ etc. or $\suc_{\eta}^{M}$,
$<_{\eta}^{M}$ etc. where $M$ and $\eta$ are clear from the context. \end{notation}
\begin{rem}
\label{rem:TUnivJEP}$ $ Let $S$ be a standard tree. The following
is not hard to see:
\begin{enumerate}
\item $T_{S}^{\forall}$ is a universal theory.
\item $T_{S}^{\forall}$ has the joint embedding property (JEP).
\item If $S_{1}\subseteq S_{2}$ then $T_{S_{1}}^{\forall}\subseteq T_{S_{2}}^{\forall}$
and moreover, if $M\models T_{S_{2}}^{\forall}$ is existentially
closed, $M\upharpoonright L_{S_{1}}$ is an existentially closed model
of $T_{S_{1}}^{\forall}$.
\end{enumerate}
\end{rem}
We will need some technical closure operators. 
\begin{defn}
$ $Assume $S$ is a \uline{finite} standard tree.
\begin{enumerate}
\item Suppose $\Sigma$ is a finite set of \uline{terms} from $L_{S}$.
We define the following closure operators on terms:

\begin{enumerate}
\item $\cl_{\wedge}^{S}\left(\Sigma\right)=\Sigma\cup\bigcup\left\{ \wedge_{\eta}\left(\Sigma^{2}\right)\left|\,\eta\in S\right.\right\} =\Sigma\cup\left\{ t_{1}\wedge_{\eta}t_{2}\left|\, t_{1},t_{2}\in\Sigma,\eta\in S\right.\right\} $.
\item $\cl_{G}^{S}\left(\Sigma\right)=\Sigma\cup\bigcup\left\{ G_{\eta_{1},\eta_{2}}\left(\Sigma\right)\left|\,\eta_{1}<_{\suc}\eta_{2}\in S\right.\right\} $.
\item $\cl_{\limb}^{S}\left(\Sigma\right)=\Sigma\cup\bigcup\left\{ \minb_{\eta}\left(\Sigma\right)\left|\,\eta\in S\right.\right\} $.
\item $\cl^{0,S}\left(\Sigma\right)=\cl_{G}^{S}\left(\cl_{\limb}^{S}\left(\cl_{\wedge}^{S}\left(\cdots\left(\cl_{G}^{S}\left(\cl_{\limb}^{S}\left(\cl_{\wedge}^{S}\left(\Sigma\right)\right)\right)\right)\right)\right)\right)$
where the number of compositions is the length of the longest branch
in $S$.
\item $\cl_{\suc}^{S}\left(\Sigma\right)=\bigcup\left\{ \suc_{\eta}\left(\Sigma^{2}\right)\cup\pre_{\eta}\left(\Sigma\right)\left|\,\eta\in S\right.\right\} \cup\Sigma$.
\item $\cl^{S}\left(\Sigma\right)=\cl^{0,S}\left(\cl_{\suc}^{S}\left(\Sigma\right)\right)$.
\end{enumerate}
\item Denote $\cl^{\left(0\right),S}=\cl^{0,S}$ and for a number $0<k<\omega$,
$\cl^{\left(k\right),S}\left(\Sigma\right)=\cl^{S}\left(\cl^{\left(k-1\right),S}\left(S\right)\right)$.
\item If $\bar{t}=\left\langle t_{i}\left|\, i<n\right.\right\rangle $
is an $n$-tuple of terms then $\cl^{S}\left(\bar{t}\right)$ is $\cl^{S}\left(\left\{ t_{i}\left|\, i<n\right.\right\} \right)$,
and similarly define the other closure operators for tuples of terms. 
\item For a model $M\models T_{S}^{\forall}$, and $\bar{a}\in M^{<\omega}$,
define $\cl^{0,S}\left(\bar{a}\right)=\left(\cl^{0,S}\left(\bar{x}\right)\right)^{M}\left(\bar{a}\right)$
where $\bar{x}$ is a sequence of variables in the length of $\bar{a}$.
Similarly define $\cl_{\wedge}^{S}\left(\bar{a}\right),\cl_{\limb}^{S}\left(\bar{a}\right),\cl_{G}^{S}\left(\bar{a}\right),\cl_{\suc}^{S}\left(\bar{a}\right)$
and $\cl^{\left(k\right),S}\left(\bar{a}\right)$. For a set $A\subseteq M$,
define $\cl^{0,S}\left(A\right)=\cl^{0,S}\left(\bar{a}\right)$ where
$\bar{a}$ is an enumeration of $A$, and similarly for the other
closure operators.
\end{enumerate}
\end{defn}
We will usually omit the superscript $S$ when it is clear from the
context. 
\begin{claim}
\label{cla:cl^0 is closed under lim, G, wedge}Assume $S$ is a finite
standard tree. For $A\subseteq M\models T_{S}^{\forall}$, $\cl^{0}\left(A\right)$
is closed under $\wedge_{\eta}$, $\limb_{\eta}$ and $G_{\eta_{1},\eta_{2}}$
for all $\eta$ and $\eta_{1}<_{\suc}\eta_{2}$ in $S$. So it is
the substructure generated by $A$ in the language $L_{S}'$ (recall
that $L_{S}'=L_{S}\backslash\set{\pre_{\eta},\suc_{\eta}}{\eta\in S}$).\end{claim}
\begin{proof}
[Proof (sketch).]Note that $\cl_{\limb}\left(\cl_{\wedge}\left(A\right)\right)$
is closed under $\limb_{\eta}$ and $\wedge_{\eta}$ for all $\eta\in S$. 

For $n<\omega$, let $\cl^{0,\left(n\right)}\left(A\right)=\cl_{G}\left(\cl_{\limb}\left(\cl_{\wedge}\left(\cdots\left(\cl_{G}\left(\cl_{\limb}\left(\cl_{\wedge}\left(A\right)\right)\right)\right)\right)\right)\right)$
where there are $n$ compositions. For $\eta\in S$, let $r\left(\eta\right)=\left|\left\{ \nu\in S\left|\,\nu\leq\eta\right.\right\} \right|$,
so $\cl^{0}=\cl^{0,\left(\max\left\{ r\left(\eta\right)\left|\,\eta\in S\right.\right\} \right)}$. 

Let $B\supseteq A$ be the closure of $A$ in $M$ under $\wedge_{\eta}$,
$\limb_{\eta}$ and $G_{\eta_{1},\eta_{2}}$ for all $\eta$ and $\eta_{1}<_{\suc}\eta_{2}$
in $S$. Then $B$ is in fact $\cl^{0,\left(\omega\right)}\left(A\right)=\bigcup\left\{ \cl^{0,\left(n\right)}\left(A\right)\left|\, n<\omega\right.\right\} $.
Now, by induction on $r\left(\eta\right)$ it is easy to see that
$B\cap P_{\eta}=\cl^{0,\left(r\left(\eta\right)\right)}\left(A\right)\cap P_{\eta}$.
Hence $B=\cl^{0}\left(A\right)$.\end{proof}
\begin{claim}
\label{cla:PolBoundOnCl} Assume $S$ is a finite standard tree. For
every $k<\omega$, there is a polynomial $f_{k}^{S}$ such that for
every finite subset $A$ of a model $M$ of $T_{S}^{\forall}$, $\left|\cl^{\left(k\right)}\left(A\right)\right|\leq f_{k}^{S}\left(\left|A\right|\right)$.
Moreover, we can choose $f_{k}^{S}$ so that it is linear (i.e., of
degree $1$).\end{claim}
\begin{proof}
The fact that $f_{k}^{S}$ exists is trivial. For the moreover part,
letting $U=\left\{ \wedge,G,\lim,\suc\right\} $, it is enough to
show that there are $\set{d_{\square}\in\Nn}{\square\in U}$ such
that for every finite $A$, $\square\in U$, $\left|\cl_{\square}\left(A\right)\right|\leq d_{\square}\cdot\left|A\right|$. 

We can choose $d_{\lim}=2$ and $d_{G}=2^{\left|S\right|^{2}}$.

For $\square=\wedge$, note that for all $a\in M$, $\cl_{\wedge}\left(A\cup\left\{ a\right\} \right)=\cl_{\wedge}\left(A\right)\cup\left\{ a,\max\set{a\wedge_{\eta}b}{b\in A}\right\} $
where $a\in P_{\eta}$ (this follows from the fact that if $a\wedge b<b\wedge b'$
then $a\wedge b'=a\wedge b$). So by induction on $\left|A\right|$,
$\left|\cl_{\wedge}\left(A\right)\right|\leq2\left|A\right|$.

For $\square=\suc$, note that for $a\in M$ such that for no $b\in A$,
$b\geq a$, $\cl_{\suc}\left(A\cup\left\{ a\right\} \right)\subseteq\cl_{\suc}\left(A\right)\cup\left\{ a,\pre_{\eta}\left(a\right),\suc_{\eta}\left(a',a\right)\right\} $
where $a\in P_{\eta}$ and $a'=\max\set{b\in A}{b<_{\eta}a}$ (it
may be that this set is empty or that $a$ is a limit element, so
the closure may be smaller). Hence by induction on $\left|A\right|$,
$\left|\cl_{\suc}\left(A\right)\right|\leq3\left|A\right|$. \end{proof}
\begin{rem}
Note that although the degree of $f_{k}^{S}$ in Claim \ref{cla:PolBoundOnCl}
is $1$, the coefficients do depend on $k$ and $S$. \end{rem}
\begin{defn}
$ $\label{def:suc-rank}Assume $S$ is a finite standard tree. 
\begin{enumerate}
\item For a term $t$ of $L_{S}$, we define its \emph{successor rank} as
follows: if $\suc$ and $\pre$ do not appear in $t$, then $r_{\suc}\left(t\right)=0$.
For two terms $t_{1},t_{2}$: $r_{\suc}\left(\suc_{\eta}\left(t_{1},t_{2}\right)\right)=\max\left\{ r_{\suc}\left(t_{1}\right),r_{\suc}\left(t_{2}\right)\right\} +1$,
$r_{\suc}\left(\pre_{\eta}\left(t_{1}\right)\right)=r_{\suc}\left(t_{1}\right)+1$,
$r_{\suc}\left(t_{1}\wedge t_{2}\right)=\max\left\{ r_{\suc}\left(t_{1}\right),r_{\suc}\left(t_{2}\right)\right\} $,
$r_{\suc}\left(G_{\eta_{1},\eta_{2}}\left(t_{1}\right)\right)=r_{\suc}\left(t_{1}\right)$
and $r_{\suc}\left(\minb_{\eta}\left(t_{1}\right)\right)=r_{\suc}\left(t_{1}\right)$.
\item For a quantifier free formula $\varphi$ in $L_{S}$, let $r_{\suc}\left(\varphi\right)$
be the maximal rank of a term appearing in $\varphi$.
\item For $k<\omega$ and an $n$-tuple of variables $\bar{x}$, denote
by $\Delta_{k}^{\bar{x},S}$ the set of all atomic formulas $\varphi\left(\bar{x}\right)$
in $L_{S}$ such that for every term $t$ in $\varphi$, $t\in\cl^{\left(k\right)}\left(\bar{x}\right)$.
Note that since $\cl^{\left(k\right)}\left(\bar{x}\right)$ is a finite
set, so is $\Delta_{k}^{\bar{x},S}$. 
\end{enumerate}
\end{defn}
\begin{claim}
\label{cla:cl^kRank}Suppose $S$ is a finite standard tree. Assume
that $M\models T_{S}^{\forall}$, $n<\omega$, $\bar{a}\in M^{n}$
and $\bar{x}$ a tuple of $n$ variables. Then $\cl^{\left(k\right)}\left(\bar{a}\right)=\left\{ t^{M}\left(\bar{a}\right)\left|\, r_{\suc}\left(t\left(\bar{x}\right)\right)\leq k\right.\right\} $.\end{claim}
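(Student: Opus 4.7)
The plan is to prove this by induction on $k$, establishing both inclusions simultaneously. Write $R_k := \{t^M(\bar{a}) : r_{\suc}(t) \leq k\}$; the goal is $\cl^{(k)}(\bar{a}) = R_k$.

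For the base case $k=0$, by definition $\cl^{(0)}(\bar{a}) = \cl^0(\bar{a})$, which by the previous claim is exactly the substructure of $M$ generated by $\bar{a}$ in the language $L_S'$, i.e. using only $\wedge_{\eta}$, $\limb_{\eta}$ and $G_{\eta_1,\eta_2}$. These are precisely the terms in which $\suc$ and $\pre$ do not appear, so $R_0$ coincides with this set.

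For the inductive step, assume $\cl^{(k)}(\bar{a}) = R_k$ and consider $\cl^{(k+1)}(\bar{a}) = \cl^0(\cl_{\suc}(\cl^{(k)}(\bar{a})))$. For the inclusion $\cl^{(k+1)}(\bar{a}) \subseteq R_{k+1}$, first note that $\cl_{\suc}$ applied to $\cl^{(k)}(\bar{a})$ only introduces new elements of the form $\suc_\eta(s_1,s_2)$ or $\pre_\eta(s)$ for $s,s_1,s_2 \in R_k$; by the recursive definition of $r_{\suc}$ these are values of terms of rank $\leq k+1$, so $\cl_{\suc}(\cl^{(k)}(\bar{a})) \subseteq R_{k+1}$. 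Then applying $\cl^0$ only uses $\wedge_\eta$, $\limb_\eta$, $G_{\eta_1,\eta_2}$, each of which preserves $r_{\suc}$, so $R_{k+1}$ remains closed under these operations and we get $\cl^{(k+1)}(\bar{a}) \subseteq R_{k+1}$.

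For the reverse inclusion $R_{k+1} \subseteq \cl^{(k+1)}(\bar{a})$, I would induct on the structure of the term $t$ with $r_{\suc}(t) \leq k+1$. If $t$ is a variable or a term whose outermost symbol is $\wedge_\eta$, $\limb_\eta$, or $G_{\eta_1,\eta_2}$, then the immediate subterms have rank $\leq k+1$ and by the nested induction their values lie in $\cl^{(k+1)}(\bar{a})$; closure of $\cl^{(k+1)}(\bar{a}) = \cl^0(\cdots)$ under these operations (via the previous claim) gives $t^M(\bar{a}) \in \cl^{(k+1)}(\bar{a})$. If $t = \suc_\eta(t_1,t_2)$ or $t = \pre_\eta(t_1)$, then the definition of $r_{\suc}$ forces $r_{\suc}(t_i) \leq k$, so by the outer induction hypothesis $t_i^M(\bar{a}) \in \cl^{(k)}(\bar{a})$, and then $t^M(\bar{a}) \in \cl_{\suc}(\cl^{(k)}(\bar{a})) \subseteq \cl^{(k+1)}(\bar{a})$, completing the induction. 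The whole argument is essentially bookkeeping; the only delicate point is to correctly match the two stratifications (by $k$ in $\cl^{(k)}$ and by $r_{\suc}$ in terms), which is precisely arranged by the fact that a single application of $\cl$ sandwiches one layer of $\cl_{\suc}$ between applications of $\cl^0$.
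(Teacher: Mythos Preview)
Your proof is correct and follows exactly the approach the paper has in mind: the paper's proof is simply ``Easy (by induction on $k$)'', and what you have written is precisely that induction spelled out, with the forward inclusion by tracking how each closure operator affects $r_{\suc}$ and the reverse inclusion by a structural induction on terms that invokes the outer hypothesis at $\suc$/$\pre$ nodes and closure of $\cl^0$ at the remaining nodes. There is nothing to add.
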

\begin{proof}
The inclusion $\subseteq$ is clear. The other direction follows by
induction on $k$ and $t$.

For instance, suppose $r_{\suc}\left(t\left(\bar{x}\right)\right)=k$
and $t=G_{\eta_{1},\eta_{2}}\left(t_{1}\right)$, then by induction
there is some $t_{2}\in\cl^{\left(k\right)}\left(\bar{x}\right)$
such that $t_{1}^{M}\left(\bar{a}\right)=t_{2}^{M}\left(\bar{a}\right)$.
If $t_{2}\left(\bar{a}\right)\notin\Suc\left(P_{\eta_{1}}^{M}\right)$,
then $t_{2}^{M}\left(\bar{a}\right)$ is not in the domain of $G_{\eta_{1},\eta_{2}}^{M}$
and so $t^{M}\left(\bar{a}\right)=t_{2}^{M}\left(\bar{a}\right)$.
If $t_{2}^{M}\left(\bar{a}\right)\in\Suc\left(P_{\eta_{1}}^{M}\right)$,
then by the proof of Claim \ref{cla:cl^0 is closed under lim, G, wedge},
there is some $t_{3}\left(\bar{x}\right)\in\cl^{0,\left(r\left(\eta_{1}\right)\right)}\left(\cl_{\suc}\left(\cl^{\left(k-1\right)}\left(\bar{x}\right)\right)\right)$
such that $t_{2}^{M}\left(\bar{a}\right)=t_{3}^{M}\left(\bar{a}\right)$
(if $k=0$, then $t_{3}\left(\bar{x}\right)\in\cl^{0,\left(r\left(\eta_{1}\right)\right)}\left(\bar{x}\right)$).
So $t_{4}=G_{\eta_{1},\eta_{2}}\left(t_{3}\right)\in\cl^{\left(k\right)}\left(\bar{x}\right)$
and $t^{M}\left(\bar{a}\right)=t_{4}^{M}\left(\bar{a}\right)$. If
$t=s_{1}\wedge_{\eta}s_{2}$, then by induction there are $s_{3},s_{4}\in\cl^{\left(k\right)}\left(\bar{x}\right)$
such that $t^{M}\left(\bar{a}\right)=s_{3}^{M}\left(\bar{a}\right)\wedge_{\eta}s_{4}^{M}\left(\bar{a}\right)$.
Since $\cl^{\left(k\right)}\left(\bar{a}\right)$ is closed under
$\wedge$ (by Claim \ref{cla:cl^0 is closed under lim, G, wedge}),
there is some $s_{5}\in\cl^{\left(k\right)}\left(\bar{x}\right)$
such that $t^{M}\left(\bar{a}\right)=s_{5}^{M}\left(\bar{a}\right)$. \end{proof}
\begin{defn}
\label{def:k-isomorphism}Suppose $S$ is a finite standard tree and
$k<\omega$. Let $M_{1},M_{2}\models T_{S}^{\forall}$. 
\begin{enumerate}
\item Suppose $n<\omega$ and $\bar{a}\in M_{1}^{n}$, $\bar{b}\in M_{2}^{n}$.
We say that $\bar{a}\equiv_{k}^{S}\bar{b}$ if there is an isomorphism
of $L_{S}'$-structures from $\cl^{\left(k\right)}\left(\bar{a}\right)$
to $\cl^{\left(k\right)}\left(\bar{b}\right)$ taking $\bar{a}$ to
$\bar{b}$ (recall that $L_{S}'=L_{S}\backslash\set{\pre_{\eta},\suc_{\eta}}{\eta\in S}$).
In this notation we assume that $M_{1},M_{2}$ are clear from the
context. 
\item If $A\subseteq M_{1}$, $B\subseteq M_{2}$ are two finite subsets
of $M_{1}$ and $M_{2}$, we write $A\xrightarrow[k]{S,f}B$ when
$f$ extends some $L_{S}'$-isomorphism $f':\cl^{\left(k\right)}\left(A\right)\to\cl^{\left(k\right)}\left(B\right)$
such that $f'\left(A\right)=B$. So this is equivalent to saying that
$B=\set{f\left(a\right)}{a\in A}$, $\sequence a{a\in A}\equiv_{k}^{S}\sequence{f\left(a\right)}{a\in A}$
and $f\upharpoonright\cl^{\left(k\right)}\left(A\right)$ witnesses
this. 
\end{enumerate}
\end{defn}
Recall (from the notation section in the beginning of Section \ref{sec:Preliminaries}),
that for a finite set of formulas $\Delta$, by writing $\Delta\left(\bar{x};\bar{y}\right)$
we mean that we assign to it a partition of the free variables appearing
in it. In that case, for $\bar{b}$ of the same length as $\bar{x}$,
$\tp_{\Delta\left(\bar{x};\bar{y}\right)}\left(\bar{b}/A\right)$
is the set of formulas $\varphi\left(\bar{x},\bar{a}\right)$ such
that $\varphi\left(\bar{x},\bar{y}\right)\in\Delta$, $\bar{a}\in A^{\lg\left(\bar{y}\right)}$
and $\C\models\varphi\left(\bar{b},\bar{a}\right)$. If the partition
$\left(\bar{x};\bar{y}\right)$ is clear, then we omit it from the
notation. 

Recall also that $\Delta_{k}^{\bar{x},S}$ is the set of all atomic
formulas $\varphi\left(\bar{x}\right)$ in $L_{S}$ such that for
every term $t$ in $\varphi$, $t\in\cl^{\left(k\right)}\left(\bar{x}\right)$.
\begin{defn}
\label{def:k-type}Suppose $S$ is a finite standard tree. For $M\models T_{S}^{\forall}$,
$\bar{a}\in M^{<\omega}$, $A\subseteq M$ a finite set, and $k<\omega$,
let $\tp_{k}^{S}\left(\bar{a}/A\right)=\tp_{\Delta_{k}^{\bar{x}\bar{y},S}}\left(\bar{a}/A\right)$
where $\lg\left(\bar{x}\right)=\lg\left(\bar{a}\right)$ and  $\bar{y}$
is of length $\left|A\right|$. This is the \emph{$k$-type of $\bar{a}$
over $A$}. 
\end{defn}
In Definitions \ref{def:suc-rank}, \ref{def:k-isomorphism} and \ref{def:k-type},
we omit $S$ from the superscript when it is clear from the context. 
\begin{claim}
\label{cla:quFreeTpEquiv} Suppose $S$ is a finite standard tree.
Assume $M_{1},M_{2}\models T_{S}^{\forall}$. Assume that $\bar{a}\in M_{1}^{n},\bar{b}\in M_{2}^{n}$
for some $n<\omega$, $\bar{x}$ a tuple of $n$ variables and assume
$k<\omega$. Then the following are equivalent:
\begin{enumerate}
\item $\bar{a}\equiv_{k}\bar{b}$.
\item $\tp_{k}\left(\bar{a}\right)=\tp_{k}\left(\bar{b}\right)$.
\item For every quantifier free formula $\varphi\left(\bar{x}\right)$ in
$L_{S}$ with $r_{\suc}\left(\varphi\right)\leq k$, $M_{1}\models\varphi\left(\bar{a}\right)\Leftrightarrow M_{2}\models\varphi\left(\bar{b}\right)$.
\item The tuples $\sequence{t\left(\bar{a}\right)}{t\in\cl^{\left(k\right)}\left(\bar{x}\right)}$
and $\sequence{t\left(\bar{b}\right)}{t\in\cl^{\left(k\right)}\left(\bar{x}\right)}$
have the same quantifier free type in $L_{S}'$. 
\end{enumerate}
\end{claim}
\begin{proof}
(1) implies (2): assume $\bar{a}\equiv_{k}\bar{b}$ and $f:\cl^{\left(k\right)}\left(\bar{a}\right)\to\cl^{\left(k\right)}\left(\bar{b}\right)$
is an $L_{S}'$-isomorphism taking $\bar{a}$ to $\bar{b}$. It is
easy to see by induction on $t$ and $k$ that for every term $t\in\cl^{\left(k\right)}\left(\bar{x}\right)$,
$f\left(t\left(\bar{a}\right)\right)=t\left(\bar{b}\right)$, and
so $\tp_{k}\left(\bar{a}\right)=\tp_{k}\left(\bar{b}\right)$.

(2) implies (3): this follows from Claim \ref{cla:cl^kRank} --- for
every term $t\left(\bar{x}\right)$ with rank $r_{\suc}\left(t\right)\leq k$
there is a term $t'\in\cl^{\left(k\right)}\left(\bar{x}\right)$ such
that $M_{1}\models t'\left(\bar{a}\right)=t\left(\bar{a}\right)$.
By induction on $k$ and $t$, one can show that since $\tp_{k}\left(\bar{a}\right)=\tp_{k}\left(\bar{b}\right)$,
$M_{2}\models t'\left(\bar{b}\right)=t\left(\bar{b}\right)$ and this
suffices. For instance, suppose $t=s_{1}\wedge_{\eta}s_{2}$. By induction,
there are $s_{3},s_{4}\in\cl^{\left(k\right)}\left(\bar{x}\right)$
such that $s_{1}^{M_{1}}\left(\bar{a}\right)=s_{3}^{M_{1}}\left(\bar{a}\right)$
and $s_{2}^{M_{1}}\left(\bar{a}\right)=s_{4}^{M_{1}}\left(\bar{a}\right)$
and the same equations hold with $M_{2}$ instead of $M_{1}$ and
$\bar{b}$ instead of $\bar{a}$. Since $\cl^{\left(k\right)}\left(\bar{a}\right)$
is closed under $\wedge$, there is some $s_{5}\in\cl^{\left(k\right)}\left(\bar{x}\right)$
such that $M_{1}\models s_{3}\left(\bar{a}\right)\wedge_{\eta}s_{4}\left(\bar{a}\right)=s_{5}\left(\bar{a}\right)$,
so 
\[
s_{5}^{M_{1}}\left(\bar{a}\right)=\max\set{s^{M_{1}}\left(\bar{a}\right)}{s\in\cl^{\left(k\right)}\left(\bar{x}\right),M_{1}\models s\left(\bar{a}\right)\leq_{\eta}s_{3}\left(\bar{a}\right),s_{4}\left(\bar{a}\right)}.
\]
By (2), the same equation holds if we replace $M_{1}$ with $M_{2}$
and $\bar{a}$ with $\bar{b}$. Since $\cl^{\left(k\right)}\left(\bar{b}\right)$
is closed under $\wedge$, it follows that $M_{2}\models s_{3}\left(\bar{b}\right)\wedge_{\eta}s_{4}\left(\bar{b}\right)=s_{5}\left(\bar{b}\right)$.

(3) implies (4): since formulas in $L_{S}'$ do not increase the successor
rank, this is clear.  

(4) implies (1): the map taking $t\left(\bar{a}\right)$ to $t\left(\bar{b}\right)$
for every term $t\in\cl^{\left(k\right)}\left(\bar{x}\right)$ is
a well defined isomorphism of $L_{S}'$ structures.
\end{proof}
Similarly, we have:
\begin{claim}
\label{cla:reduc}Suppose $S$ is a finite standard tree. Let $M\models T_{S}^{\forall}$,
$n<\omega$, $\bar{a},\bar{b}\in M^{n}$, $\bar{x}$ a tuple of $n$
variables and $k,k_{1},k_{2}<\omega$. 
\begin{enumerate}
\item if $\bar{a}\equiv_{k}\bar{b}$ then there is a unique isomorphism
that shows it. Namely, for each $t\in\cl^{\left(k\right)}\left(\bar{x}\right)$,
the isomorphism $f$ must satisfy $f\left(t\left(\bar{a}\right)\right)=t\left(\bar{b}\right)$.
\item Assume $k_{2}\geq k_{1}$. Then $\bar{a}\equiv_{k_{2}}\bar{b}$ implies
$\bar{a}\equiv_{k_{1}}\bar{b}$.
\item If $\bar{a}\bar{a}'\equiv_{k}\bar{b}\bar{b}'$ then $\bar{a}\equiv_{k}\bar{b}$.
\item If $\bar{a}\equiv_{k+1}\bar{b}$, witnessed by $f$, then $\cl\left(\bar{a}\right)\xrightarrow[k]{S,f}\cl\left(\bar{b}\right)$. 
\item If $S'\subseteq S$, and $\bar{a}\equiv_{k}^{S}\bar{b}$ then $\bar{a}\equiv_{k}^{S'}\bar{b}$
(when $\bar{a}$ and $\bar{b}$ are considered as tuples in $M_{1}\upharpoonright L_{S'}$
and $M_{2}\upharpoonright L_{S'}$). 
\end{enumerate}
\end{claim}
Before proceeding to prove the main quantifier elimination lemma,
let us give two more important definitions:
\begin{defn}
Suppose $S$ is a standard tree. Suppose $M\models T_{S}^{\forall}$,
$\eta\in S$ and $a,b\in P_{\eta}^{M}$. We say that the \emph{distance}
between $a$ and $b$ is $n$ if $a<_{\eta}b$ and $b$ is the $n$-th
successor of $a$ or vice-versa. We say the distance is infinite if
for no $n<\omega$ the distance is $n$. Denote this by $d\left(a,b\right)=n$.
\end{defn}
For a set $A\subseteq M\models T_{S}^{\forall}$, we denote by $\Suc\left(A\right)$
the set of all successors in $A$. 
\begin{defn}
\label{def:eq-relation}Suppose $S$ is a standard tree, $\eta\in S$,
$M\models T_{S}^{\forall}$ and $A\subseteq M$. Let $R_{\eta}^{A}\subseteq\Suc\left(A\right)^{2}$
be the following relation: $\left(x,y\right)\in R_{\eta}^{A}$ iff
$\lim\left(x\right)=\lim\left(y\right)$ and $x$ and $y$ are comparable
($x<_{\eta}y$ or $y\leq_{\eta}x$). Let $\sim_{\eta}^{A}$ be the
the transitive closure of $R_{\eta}^{A}$ (so it is an equivalence
relation on $\Suc\left(A\right)$). 
\end{defn}
So the equivalence relation $\sim_{\eta}$ determines the function
$G_{\eta,\eta'}$ for $\eta<_{\suc}\eta'$ from $S$: if $a,b\in P_{\eta}^{M}$
for $M\models T_{S}^{\forall}$ and $a\sim_{\eta}^{M}b$ then $G_{\eta,\eta'}\left(a\right)=G_{\eta,\eta'}\left(b\right)$. 
\begin{lem}
\label{lem:quElLemma} (Quantifier elimination lemma) For every finite
standard tree $S$, and $m_{1},n,k<\omega$, there is $m_{2}=m_{2}\left(m_{1},k,S\right)<\omega$
such that if:
\begin{itemize}
\item $M_{1},M_{2}\models T_{S}^{\forall}$ are existentially closed.
\item $\bar{a}\in M_{1}^{n}$ and $\bar{b}\in M_{2}^{n}$.
\item $\bar{a}\equiv_{m_{2}}\bar{b}$.
\end{itemize}
Then for all $\bar{c}\in M_{1}^{k}$ there is some $\bar{d}\in M_{2}^{k}$
such that $\bar{c}\bar{a}\equiv_{m_{1}}\bar{d}\bar{b}$.
\end{lem}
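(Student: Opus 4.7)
The plan is a back-and-forth argument that leverages the existential closure of $M_1$ and $M_2$. First I reduce to the case $k=1$: assuming the lemma for a single new element, with bound $m_2^{(1)}(m_1, S)$, I add the coordinates of $\bar{c}$ one at a time, using Claim~\ref{cla:reduc}(3) to strip the extra coordinate at each step; then $m_2(m_1, k, S)$ is the $k$-fold iterate of $m_2^{(1)}(\cdot, S)$ starting from $m_1$.

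Assume $k = 1$ and fix $c \in M_1$. By Claim~\ref{cla:cl^kRank}, $\cl^{(m_1)}(\bar{a}, c) = \{t^{M_1}(\bar{a}, c) : r_{\suc}(t) \le m_1\}$, which is finite by Claim~\ref{cla:PolBoundOnCl}. Its isomorphism type over $\cl^{(m_2)}(\bar{a})$ is encoded by a finite combinatorial recipe describing $c$: which $P_\eta$ contains $c$; the meet $c \wedge_\eta a'$ and finite distance (if any) for every $a' \in \cl^{(m_2)}(\bar{a}) \cap P_\eta$; the iterated values of $\pre_\eta$, $\limb_\eta$, and $\suc_\eta(\cdot, c)$; the iterated $G_{\eta_1, \eta_2}$-images of $c$; and all collapses of such terms into elements of $\cl^{(m_2)}(\bar{a})$. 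I choose $m_2$ large enough that whenever a term $t^{M_1}(\bar{a}, c)$ with $r_{\suc}(t) \le m_1$ reduces in $M_1$ to some pure $\bar{a}$-value $s^{M_1}(\bar{a})$, that value already lies in $\cl^{(m_2)}(\bar{a})$; such a finite $m_2$ exists because $S$ is fixed and finite and each application of $\cl$ raises successor rank by a uniform amount. When $\bar{a} \equiv_{m_2} \bar{b}$, the canonical isomorphism $f_0 : \cl^{(m_2)}(\bar{a}) \to \cl^{(m_2)}(\bar{b})$ from Claim~\ref{cla:reduc}(1) translates this recipe into a quantifier-free formula $\Gamma(y, \bar{b})$ whose satisfaction by $d \in M_2$ forces $\bar{a} c \equiv_{m_1} \bar{b} d$ via Claim~\ref{cla:quFreeTpEquiv}.

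To realize $\Gamma$ in $M_2$, I first build an abstract $T_S^\forall$-extension $N \supseteq \cl^{(m_2)}(\bar{b})$ by adjoining one new point $\hat{d}$ and defining its $\wedge_\eta$, $\limb_\eta$, $\suc_\eta$, $\pre_\eta$, and $G_{\eta_1, \eta_2}$-values according to the transported recipe. Direct verification of the tree axioms, discreteness, and the regressivity of $G$ shows $N \models T_S^\forall$. By Remark~\ref{rem:TUnivJEP}, $N$ and $M_2$ can be amalgamated over $\cl^{(m_2)}(\bar{b})$ inside a common $T_S^\forall$-model $M_2^* \supseteq M_2$; the image of $\hat{d}$ in $M_2^*$ witnesses the existential statement $\exists y\, \Gamma(y, \bar{b})$, and since $M_2$ is existentially closed, an actual witness $d \in M_2$ exists.

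The main obstacle is the combined bookkeeping behind the choice of $m_2$ and the verification that $N$ satisfies $T_S^\forall$. The $G_{\eta_1, \eta_2}$-functions couple different predicates $P_\eta$ of $S$, so a single closure step applied to $\bar{a}, c$ can demand data from several $\eta$-levels simultaneously; bounding $m_2$ in terms of $m_1$ and the depth of $S$ requires an induction that accounts for this branching, likely analogous to the stratification already used in the proof that $\cl^0 = \cl_{\max r(\eta)}^0$. The same coupling makes the regressivity check delicate: the $G$-image prescribed for $\hat{d}$ must agree with the existing $G$-images of every element of $\cl^{(m_2)}(\bar{b})$ that shares a $\limb_\eta$-value with $\hat{d}$, and ensuring no such identification is missed is precisely what the chosen $m_2$ must guarantee.
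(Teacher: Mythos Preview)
Your reduction to $k=1$ matches the paper, and the broad idea of realizing the desired type in an extension of $M_2$ and then pulling back by existential closure is also what the paper does in its hardest case. But two concrete gaps prevent your argument from going through as written.

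First, the amalgamation step is not available. You invoke Remark~\ref{rem:TUnivJEP} to amalgamate $N$ and $M_2$ over $\cl^{(m_2)}(\bar b)$, but that remark only records JEP, not AP; amalgamation for $T_S^{\forall}$ is only obtained \emph{after} the model completion is shown to exist, and the model completion is established via this very lemma, so the appeal is circular. Moreover, $\cl^{(m_2)}(\bar b)$ is only an $L_S'$-substructure of $M_2$: it is not closed under $\suc_\eta$ or $\pre_\eta$, so amalgamation over it does not even parse in the full language. The paper avoids this entirely by extending all of $M_2$ directly (building $M_3'\supseteq P_{\eta_0}^{M_2}$ and then $M_3\supseteq M_2$), rather than amalgamating over a finite piece.

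Second, you do not actually choose $m_2$, and your final paragraph correctly identifies this as the heart of the matter. The paper's mechanism is an induction on $|S|$: write $S=\{\eta_0\}\cup\bigcup_i S_i$ with $\eta_0$ the root, and set
\[
m_2(m_1,1,S)=\max_i\{\,2\,m_2(m_1,K,S_i)\,\}+2m_1+1,
\]
where $K$ is an absolute bound on the number of \emph{new} successor elements appearing in $\cl_{\suc}(\cl^{(0)}(A_0c))$. The point is that regressiveness of $G_{\eta_0,\eta_i}$ collapses every new element of $\cl^{(m_1)}(A_0c)\cap P_{\eta_0}$ to one of at most $K$ values under $G$, so the problem in each $S_i$ becomes an instance of the lemma with $K$ new elements over a smaller tree, handled by the inductive bound $m_2(m_1,K,S_i)$. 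The extra $2m_1+1$ guarantees that when $c$ sits strictly between two $A_0$-elements $x<c<y$ with $\limb(x)=\limb(y)$, the images $f(x),f(y)$ are far enough apart to insert $d'$ and its $m_1$ successors and predecessors. Your ``recipe'' description gestures at this case analysis but does not carry it out, and without the induction on $|S|$ there is no way to control the $G$-images uniformly.
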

(Note that $m_{2}$ does not depend on $n$.)
\begin{proof}
The proof is by induction on $\left|S\right|$. Given $S$, we will
show that the lemma holds for all $m_{1}$ and $k$. Without loss
of generality $k=1$: by induction one can choose $m_{2}\left(m_{1},k+1,S\right)=m_{2}\left(m_{2}\left(m_{1},k,S\right),1,S\right)$.
We may also assume that $m_{1}>0$. 

We may assume that $m_{2}\left(m_{1},k,S'\right)>\max\left\{ m_{1},k,\left|S'\right|\right\} $
for all $S'\subsetneq S$ (by enlarging $m_{2}$ if necessary).

For $\left|S\right|=0$ the claim is trivial because $T_{S}^{\forall}$
is just the theory of a set with no structure.

Assume $0<\left|S\right|$. Let $\eta_{0}$ be the root of $S$, $S_{0}=\left\{ \eta_{0}\right\} $
and partition $S$ as $S=\bigcup\left\{ S_{i}\left|\, i<m\right.\right\} $
where for $i\geq1$, the $S_{i}$'s are the connected components of
$S$ above $\eta_{0}$ (note that $S_{i}\subseteq S$, see Notation
\ref{nota:substructures of trees}). Let 
\[
m_{2}=m_{2}\left(m_{1},1,S\right)=\max\left\{ 2m_{2}\left(m_{1},K,S_{i}\right)\left|\,1\leq i<m\right.\right\} +2m_{1}+1
\]
 where $K=3$. 

Suppose $M_{1},M_{2}$, $\bar{a}$ and $\bar{b}$ are as in the lemma
and let $c\in M_{1}$. 

By assumption there is a unique $L_{S}'$-isomorphism $f:\cl^{\left(m_{2}\right)}\left(\bar{a}\right)\to\cl^{\left(m_{2}\right)}\left(\bar{b}\right)$.

For $i\leq m$, let $P_{S_{i}}=\bigvee\left\{ P_{\eta}\left|\,\eta\in S_{i}\right.\right\} $,
$A_{i}=\cl^{\left(m_{1}\right)}\left(\bar{a}\right)\cap P_{S_{i}}^{M_{1}}$
and $B_{i}=\cl^{\left(m_{1}\right)}\left(\bar{b}\right)\cap P_{S_{i}}^{M_{2}}$.

Since $\bar{a}\equiv_{m_{2}}\bar{b}$, it follows that $\cl^{\left(m_{2}\left(m_{1},K,S_{i}\right)\right)}\left(\bar{a}\right)\xrightarrow[m_{2}\left(m_{1},K,S_{i}\right)]{f}\cl^{\left(m_{2}\left(m_{1},K,S_{i}\right)\right)}\left(\bar{b}\right)$
and in particular $A_{i}\xrightarrow[m_{2}\left(m_{1},K,S_{i}\right)]{S_{i},f}B_{i}$
(see Claim \ref{cla:reduc} (4) and (5)).

We divide into cases:
\begin{casenv}
\item $c\notin P_{\eta}^{M_{1}}$ for every $\eta\in S$.

Here finding $d$ is easy due to the fact that $M_{1}$ and $M_{2}$
are existentially closed.

\item $c\in P_{S_{i}}^{M_{1}}$ for some $1\leq i\leq m$.

$A_{i}\xrightarrow[m_{2}\left(m_{1},K,S_{i}\right)]{S_{i},f}B_{i}$
(as subsets of $M_{1}\upharpoonright L_{S_{i}}$ and $M_{2}\upharpoonright L_{S_{i}}$),
so by the induction hypothesis (and by Remark \ref{rem:TUnivJEP}
(3)) we can find $d\in M_{2}$ and extend $f\upharpoonright\cl^{\left(m_{1}\right)}\left(A_{i}\right)$
to an $L_{S_{i}}'$-isomorphism $f':\cl^{\left(m_{1}\right)}\left(\left\{ c\right\} \cup A_{i}\right)\to\cl^{\left(m_{1}\right)}\left(\left\{ d\right\} \cup B_{i}\right)$
taking $c$ to $d$. Note that $f'$ is also an $L_{S}'$-isomorphism.
It follows that
\[
f\upharpoonright\cl^{\left(m_{1}\right)}\left(\bar{a}\right)\cup f'\upharpoonright\cl^{\left(m_{1}\right)}\left(c\bar{a}\right)
\]
 is an $L_{S}'$-isomorphism from $\cl^{\left(m_{1}\right)}\left(c\bar{a}\right)$
to $\cl^{\left(m_{1}\right)}\left(d\bar{b}\right)$ that shows that
$c\bar{a}\equiv_{m_{1}}d\bar{b}$ (note that $P_{S_{j}}^{M_{1}}\cap\cl^{\left(m_{1}\right)}\left(\bar{a}c\right)=A_{j}$
for $j\neq i$ and that if $x\in\cl^{\left(m_{1}\right)}\left(\bar{a}c\right)\cap P_{S_{i}}^{M_{1}}$
then $x\in\cl^{\left(m_{1}\right)}\left(\left\{ c\right\} \cup A_{i}\right)$,
and so the domain is indeed $\cl^{\left(m_{1}\right)}\left(c\bar{a}\right)$).

\item $c\in P_{\eta_{0}}$.

For notational simplicity, let $<$ be $<_{\eta_{0}}$, $\limb$ be
$\limb_{\eta_{0}}$, $\sim$ be $\sim_{\eta_{0}}$ and $\wedge$ be
$\wedge_{\eta_{0}}$.

Let $A_{0}'=\cl^{\left(0\right)}\left(\bar{a}\right)\cap P_{\eta_{0}}^{M_{1}}$
(so this is the closure of $\bar{a}$ inside $P_{\eta_{0}}$ under
$\wedge$ and $\lim$), $B_{0}'=\cl^{\left(0\right)}\left(\bar{b}\right)\cap P_{\eta_{0}}^{M_{2}}$,
$F=\cl^{\left(m_{1}\right)}\left(A_{0}'\cup\left\{ c\right\} \right)\cap P_{\eta_{0}}^{M_{1}}$
and $\eta_{i}=\min\left(S_{i}\right)$ for $1\leq i\leq m$.

Note that $F$ is really just $\cl_{\suc}^{\left(m_{1}\right)}\left(\cl^{\left(0\right)}\left(A_{0}'\cup\left\{ c\right\} \right)\right)$.

Say that an element of $F$ is \emph{new} if it is a successor and
is not $\sim^{F}$-equivalent to any element from $A_{0}$ (note:
$A_{0}$ and \uline{not} $A_{0}'$). We will prove the following
claim:
\begin{claimi*}

\begin{enumerate}
\item There are at most $K$ many $\sim^{F}$-equivalence classes of new
elements in $F$. For each one choose a representative. Enumerate
them as $\sequence{c_{l}}{l<K'}$ for $K'\leq K$. 
\item There is a model $M_{3}'$ of $T_{S_{0}}^{\forall}$, an $L_{S_{0}}'$-isomorphism
$f'$ and $d'\in M_{3}'$ such that $M_{3}'\supseteq P_{\eta_{0}}^{M_{2}}$,
$f'\upharpoonright A_{0}=f$, $A_{0}'\cup\left\{ c\right\} \xrightarrow[m_{1}]{S_{0},f'}B_{0}'\cup\left\{ d'\right\} $
and $f'\left(c\right)=d'$ (so the domain of $f'$ is $F$). 
\item Moreover, for $l<K'$, $f'\left(c_{l}\right)$ are pairwise non-$\sim^{M_{3}}$-equivalent
and they are not $\sim^{M_{3}}$-equivalent to any element from $\Suc\left(P_{\eta_{0}}^{M_{2}}\right)$. 
\end{enumerate}
\end{claimi*}

Suppose first that Claim I holds.

For $1\leq i\leq m$ let $c_{l}^{i}=G_{\eta_{0},\eta_{i}}\left(c_{l}\right)$.

Fix $1\leq i\leq m$. By assumption, $A_{i}\xrightarrow[m_{2}\left(m_{1},K,S_{i}\right)]{S_{i},f}B_{i}$,
so by the induction hypothesis there are $d_{l}^{i}\in M_{2}$ for
$l<K'$ and an $L_{S_{i}}'$-isomorphism $g_{i}$ extending $f\upharpoonright\cl^{\left(m_{1}\right)}\left(A_{i}\right)$
such that $g_{i}\left(c_{l}^{i}\right)=d_{l}^{i}$ and $A_{i}\cup\set{c_{l}^{i}}{l<K'}\xrightarrow[m_{1}]{S_{i},g_{i}}B_{i}\cup\set{d_{l}^{i}}{l<K'}$. 
\begin{claimii*}
There exists a model $M_{3}\models T_{S}^{\forall}$ satisfying $P_{\eta_{0}}^{M_{3}}=P_{\eta_{0}}^{M_{3}'}$,
$M_{3}\supseteq M_{2}$ and $G_{\eta_{0},\eta_{i}}^{M_{3}}\left(f'\left(c_{l}\right)\right)=d_{l}^{i}$
for $l<K'$ and $1\leq i\leq m$. 
\begin{proof}
(of Claim II) Since $M_{3}'\models T_{S_{0}}^{\forall}$, $M_{2}\models T_{S}^{\forall}$
and $P_{\eta_{0}}^{M_{3}'}\supseteq P_{\eta_{0}}^{M_{2}}$ the only
thing we must show is that $G_{\eta_{0},\eta_{i}}$ defined in Claim
II is well defined and can be extended to a regressive function. This
follows directly from Claim I (3). 
\end{proof}
\end{claimii*}

Define 
\[
g=f\upharpoonright\cl^{\left(m_{1}\right)}\left(\bar{a}\right)\cup f'\upharpoonright\cl^{\left(m_{1}\right)}\left(\bar{a}c\right)\cup\bigcup\left\{ g_{i}\upharpoonright\cl^{\left(m_{1}\right)}\left(\bar{a}c\right)\left|\,1\leq i<m\right.\right\} .
\]
We claim that $g$ is an $L_{S}'$-isomorphism extending $f\upharpoonright\cl^{\left(m_{1}\right)}\left(\bar{a}\right)$
from $\cl^{\left(m_{1}\right)}\left(\bar{a}c\right)$ to $\cl^{\left(m_{1}\right)}\left(\bar{a}d'\right)$
sending $c$ to $d$. It is easy to see that $g$ is well defined
as a function. To see that it is an $L_{S}'$-isomorphism we only
need to show that if $e\in\cl^{\left(m_{1}\right)}\left(\bar{a}c\right)$
is a successor and $1\leq i\leq m$ then $G_{\eta_{0},\eta_{i}}^{M_{3}}\left(f'\left(e\right)\right)=g_{i}\left(G_{\eta_{0},\eta_{i}}^{M_{1}}\left(e\right)\right)$.
Suppose $e\sim^{F}b$ where $b\in A_{0}$, then $f'\left(e\right)\sim^{M_{3}}f'\left(b\right)$,
$G_{\eta_{0},\eta_{i}}^{M_{1}}\left(e\right)=G_{\eta_{0},\eta_{i}}^{M_{1}}\left(b\right)$
and $G_{\eta_{0},\eta_{i}}^{M_{3}}\left(f'\left(e\right)\right)=G_{\eta_{0},\eta_{i}}^{M_{3}}\left(f'\left(b\right)\right)$.
Now we are done since: 
\[
G_{\eta_{0},\eta_{i}}^{M_{3}}\left(f'\left(b\right)\right)=G_{\eta_{0},\eta_{i}}^{M_{2}}\left(f\left(b\right)\right)=f\left(G_{\eta_{0},\eta_{i}}^{M_{1}}\left(b\right)\right)=g_{i}\left(G_{\eta_{0},\eta_{i}}^{M_{1}}\left(b\right)\right).
\]
Suppose $e$ is new. Then $e\sim^{F}c_{l}$ for some $l<K'$. But
then $G_{\eta_{0},\eta_{i}}^{M_{1}}\left(e\right)=G_{\eta_{0},\eta_{i}}^{M_{1}}\left(c_{l}\right)=c_{l}^{i}$,
and $g_{i}\left(c_{l}^{i}\right)=d_{l}^{i}$, while $f'\left(e\right)\sim^{M_{3}}f'\left(c_{l}\right)$,
so $G_{\eta_{0},\eta_{i}}^{M_{3}}\left(f'\left(e\right)\right)=G_{\eta_{0},\eta_{i}}^{M_{3}}\left(f'\left(c_{l}\right)\right)=d_{l}^{i}$
by Claim II.

So $c\bar{a}\equiv_{m_{1}}d'\bar{b}$, i.e., $\tp_{m_{1}}\left(c\bar{a}\right)=\tp_{m_{1}}\left(d'\bar{b}\right)$,
and if $\Psi$ is the conjunction of all formulas appearing in $\tp_{m_{1}}\left(c\bar{a}\right)$
then $M_{3}\models\exists x\Psi\left(x\bar{b}\right)$. As $M_{2}$
is existentially closed there is some $d\in M_{2}$ such that $\Psi\left(d\bar{b}\right)$,
i.e., $c\bar{a}\equiv_{m_{1}}d\bar{b}$.

We will be done once we prove Claim I.
\begin{proof}
\renewcommand{\qedsymbol}{}(of Claim I) Again we need to divide into
cases:
\begin{casenv}
\item $c\in A_{0}'$: there is nothing to do. 
\item \label{cas:InBranch}$c$ is in a branch of $A_{0}'$, i.e., there
is $c<y\in A_{0}'$ and assume $y$ is minimal in this sense (it exists
since $A_{0}'$ is closed under $\wedge$). We again divide into cases:

\begin{casenv}
\item There is no $x\in A_{0}'$ below $c$. This means that $c<x$ for
all $x\in A_{0}'$, and even for all $x\in A_{0}$ (since for all
$x\in A_{0}$, there is $x'\in A_{0}'$ such that $\lim\left(x\right)=\lim\left(x'\right)$)
and that $y=\lim\left(y\right)$. There is exactly one $\sim^{F}$-class
of new elements, which is $\left[\suc\left(c,y\right)\right]_{\sim^{F}}$.
In this case (2) and (3) are easy: just let $d'$ be a new element
below $P_{\eta_{0}}^{M_{2}}$ with the same distance from its limit
as $d\left(c,\lim\left(c\right)\right)$ (which can be infinite, and
if $d\left(c,\lim\left(c\right)\right)>2m_{1}$, we can choose $d\left(d',\lim\left(d'\right)\right)=2m_{1}+1$). 
\item There is some $x\in A_{0}'$ such that $x<c$. Assume $x$ is maximal
in this sense.

If $\limb\left(x\right)<\limb\left(y\right)$ then necessarily $\minb\left(x\right)\leq x<c<\minb\left(y\right)=y$.
If $\lim\left(x\right)<\lim\left(c\right)$, then there is one $\sim^{F}$-class
of new elements --- $\left[\suc\left(c,y\right)\right]_{\sim^{F}}$.
Again (2) and (3) are easy: let $\lim\left(d'\right)$ be a new limit
element below $f\left(y\right)$ and above all elements from $M_{2}$
below $f\left(y\right)$ and let $d'$ be with the right distance
from $\lim\left(d'\right)$. If $\lim\left(x\right)=\lim\left(c\right)$,
then there are no new $\sim^{F}$-classes. Moreover, we can choose
$M_{3}'=M_{2}\upharpoonright L_{S_{0}}$ and $d'\in M_{2}$.

If $\minb\left(x\right)=\minb\left(y\right)$ (so also $=\lim\left(c\right)$),
then again there are no new $\sim^{F}$-classes. For (2) and (3),
we must make sure that the distance between $f\left(x\right)$ and
$f\left(y\right)$ is big enough, so that we can place $d'$ in the
right spot between them. In $F\backslash A_{0}$ we may add $m_{1}$
successors to $c$ in the direction of $y$ and $m_{1}$ predecessors.
This is why we chose $m_{2}\geq2m_{1}+1$.

\end{casenv}
\item $c$ starts a new branch in $A_{0}'$, i.e., there is no $y\in A_{0}'$
such that $c<y$. In this case, let $c'=\left\{ \max\left(c\wedge b\right)\left|\, b\in A_{0}'\right.\right\} $.
Note that if there is an element in $\cl_{\wedge}\left(A_{0}'\cup\left\{ c\right\} \right)\backslash A_{0}\cup\left\{ c\right\} $,
it must be $c'$. Adding $c'$ falls under Case \ref{cas:InBranch}
above (if it is indeed new), so the $\sim^{F}$-classes of new elements
will be those which come from $c'$ as before, and perhaps more. Namely,
it can be that $\lim\left(c\right)<c'$ (so $\lim\left(c\right)=\lim\left(c'\right)$)
in which case that is all, or we should add $\left[\suc\left(\lim\left(c\right),c\right)\right]_{\sim^{F}}$
and $\left[\suc\left(c',c\right)\right]_{\sim^{F}}$.

By the previous case, we can first find $M_{3}''\supseteq P_{\eta_{0}}^{M_{2}}$,
an $L_{S_{0}}'$-isomorphism $f''$ and $d''\in M_{3}''$ such that
$f''\upharpoonright A_{0}=f$, $A_{0}'\cup\left\{ c'\right\} \xrightarrow[m_{1}]{S_{0},f'}B_{0}'\cup\left\{ d''\right\} $
and $f''\left(c'\right)=d''$. Then we can just add a new branch starting
at $d''$ to construct $M_{3}'$. 

\end{casenv}
\end{proof}
\end{casenv}
\end{proof}
\begin{claim}
\label{cla:QEAlmost} Let $S$ be a finite standard tree. For every
formula $\varphi\left(\bar{x}\right)$ (with free variables) there
is a quantifier free formula $\psi\left(\bar{x}\right)$ such that
for every existentially closed model $M\models T_{S}^{\forall}$,
we have $M\models\psi\equiv\varphi$.\end{claim}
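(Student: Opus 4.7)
The plan is induction on the construction of $\varphi$; atomic formulas and Boolean combinations are handled immediately, so the only substantive case is $\varphi(\bar{x}) = \exists y\,\theta(\bar{x},y)$ with $\theta$ quantifier free (after using the inductive hypothesis to replace the matrix of each quantifier). I reduce this case to Lemma \ref{lem:quElLemma} as follows. Choose $k<\omega$ large enough that every term occurring in $\theta(\bar{x},y)$ has successor rank at most $k$. By Claim \ref{cla:cl^kRank} the value of any such term on a tuple $\bar{a}c$ lies in $\cl^{(k)}(\bar{a}c)$, so by Claim \ref{cla:quFreeTpEquiv} the truth value of $\theta(\bar{a},c)$ is determined by $\tp_{k}(\bar{a}c)$. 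Let $m_2 := m_2(k,1,S)$, as provided by Lemma \ref{lem:quElLemma}.

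The key observation is that in an existentially closed model the truth of $\exists y\,\theta(\bar{x},y)$ on a tuple depends only on its $\tp_{m_2}$-type: if $M_1, M_2 \models T_S^{\forall}$ are existentially closed, $\bar{a}\in M_1$, $\bar{b}\in M_2$, and $\bar{a}\equiv_{m_2}\bar{b}$, then any witness $c\in M_1$ for $\theta(\bar{a},c)$ yields, via Lemma \ref{lem:quElLemma} applied with $\bar{c}=c$, some $d\in M_2$ with $c\bar{a}\equiv_{k}d\bar{b}$, whereupon Claim \ref{cla:quFreeTpEquiv} gives $\theta(\bar{b},d)$; the converse direction is symmetric.

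Since $S$ is finite, the set $\Delta_{m_2}^{\bar{x}}$ of atomic formulas is finite, so there are only finitely many possible $\tp_{m_2}$-types in the variables $\bar{x}$. Let $\Pi$ be the finite collection of those types whose realizations in some (equivalently every, by the previous paragraph) existentially closed model satisfy $\exists y\,\theta$. For each $p\in\Pi$ let $\psi_p(\bar{x})$ be the conjunction of the finitely many (possibly negated) atomic formulas comprising $p$; then $\psi := \bigvee_{p\in\Pi}\psi_p$ is the desired quantifier-free formula. The only real bookkeeping is choosing $k$ so that Claim \ref{cla:quFreeTpEquiv} captures $\theta$, which is automatic since $\theta$ contains only finitely many terms; there is no deeper obstacle beyond what Lemma \ref{lem:quElLemma} already provides.
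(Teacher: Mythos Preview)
Your proof is correct and follows essentially the same approach as the paper's: reduce to a single existential quantifier over a quantifier-free matrix, take $k$ to be the successor rank of the matrix, set $m_2=m_2(k,1,S)$ from Lemma~\ref{lem:quElLemma}, use that lemma together with Claim~\ref{cla:quFreeTpEquiv} to show that the truth of the existential formula is determined by the $\equiv_{m_2}$-type, and finally take the disjunction over the finitely many $\Delta_{m_2}^{\bar{x}}$-types that realize the formula in some existentially closed model. Your exposition is slightly more explicit about the induction on the complexity of $\varphi$, but the argument is the same.
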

\begin{proof}
It is enough to check formulas of the form $\exists y\varphi\left(y,\bar{x}\right)$
where $\varphi$ is quantifier free and $\lg\left(\bar{x}\right)=n>0$.
Let $k=r_{\suc}\left(\varphi\right)$. Let $m=m_{2}\left(k,1,S\right)$
from Lemma \ref{lem:quElLemma}. By Claim \ref{cla:quFreeTpEquiv},
if $M_{1},M_{2}\models T_{S}^{\forall}$ are existentially closed
and $\bar{a}\in M_{1},\bar{b}\in M_{2}$ are of length $n$ and $\bar{a}\equiv_{m}\bar{b}$,
then $M_{1}\models\exists y\varphi\left(y,\bar{a}\right)$ iff $M_{2}\models\exists y\varphi\left(y,\bar{b}\right)$.

Assume $\left|\Delta_{m}^{\bar{x}}\right|=N$ and let $\left\{ \varphi_{i}\left|\, i<N\right.\right\} $
be an enumeration of $\Delta_{m}^{\bar{x}}$. For every $\eta:N\to2$,
let $\varphi_{\eta}^{m}\left(\bar{x}\right)=\bigwedge_{i<N}\varphi_{i}^{\eta\left(i\right)}\left(\bar{x}\right)$
(where $\varphi^{0}=\neg\varphi$ and $\varphi^{1}=\varphi$). 

Let 
\[
R=\left\{ \eta:N\to2\left|\,\exists\mbox{ e.c. }M\models T_{S}^{\forall}\,\&\,\exists\bar{c}\in M\left(M\models\varphi_{\eta}^{m}\left(\bar{c}\right)\land\exists y\varphi\left(y,\bar{c}\right)\right)\right.\right\} .
\]
Let $\psi\left(\bar{x}\right)=\bigvee_{\eta\in R}\varphi_{\eta}^{m}\left(\bar{x}\right)$.
By Claim \ref{cla:quFreeTpEquiv} it follows that $\psi$ is the desired
formula.\end{proof}
\begin{cor}
\label{cor:QE}If $M_{1}$ and $M_{2}$ are two existentially closed
models of $T_{S}^{\forall}$ then $M_{1}\equiv M_{2}$ and their theory
eliminates quantifiers.\end{cor}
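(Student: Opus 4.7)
The plan is to derive both halves of the corollary from a single completeness statement. Quantifier elimination for formulas with at least one free variable is given directly by Claim \ref{cla:QEAlmost}, and once $M_{1}\equiv M_{2}$ is established, QE extends to sentences trivially, since each sentence is then either true in every existentially closed model of $T_{S}^{\forall}$ or in none, hence equivalent modulo the common theory to $\top$ or $\bot$. So the real task is to prove completeness.

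To prove completeness, I reduce an arbitrary sentence $\sigma$, put in prenex normal form $Q_{1}x_{1}\cdots Q_{n}x_{n}\theta(\bar x)$ with $\theta$ quantifier-free, to an existential sentence. For each $j$ from $n$ down to $2$, the subformula $Q_{j}x_{j}\cdots Q_{n}x_{n}\theta$ has the nonempty free-variable set $\{x_{1},\ldots,x_{j-1}\}$, so by Claim \ref{cla:QEAlmost} it is equivalent, in every e.c.\ model, to some quantifier-free $\psi_{j-1}(x_{1},\ldots,x_{j-1})$. After $n-1$ such replacements, $\sigma$ is equivalent in every e.c.\ model to $Q_{1}x_{1}\psi_{1}(x_{1})$ with $\psi_{1}$ quantifier-free; negating if $Q_{1}=\forall$, it suffices to show $M_{1}\models\exists x\,\psi(x)\Leftrightarrow M_{2}\models\exists x\,\psi(x)$ for an arbitrary quantifier-free $\psi(x)$.

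For that final comparison I apply Lemma \ref{lem:quElLemma} with $\bar a=\bar b=\emptyset$. The key observation is that $L_{S}$ has no constant symbols, so $\cl^{(m)}(\emptyset)=\emptyset$ for every $m$, and hence $\emptyset\equiv_{m}\emptyset$ holds vacuously (via the empty isomorphism). Setting $m_{1}=r_{\suc}(\psi)$, $k=1$, and $m_{2}=m_{2}(m_{1},1,S)$ supplied by the lemma, a witness $a\in M_{1}$ with $M_{1}\models\psi(a)$ yields $d\in M_{2}$ with $a\equiv_{m_{1}}d$; then $M_{2}\models\psi(d)$ by Claim \ref{cla:quFreeTpEquiv}, so $M_{2}\models\exists x\,\psi(x)$, and symmetrically for the converse.

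The only real obstacle is that Claim \ref{cla:QEAlmost} is stated only for formulas with free variables, so it does not directly handle sentences. The absence of constants in $L_{S}$ is what resolves this: it lets the empty tuple serve as the base case in Lemma \ref{lem:quElLemma}, which converts the ``free variable'' QE into a statement about sentences and thereby yields completeness.
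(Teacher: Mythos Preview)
Your proof is correct, but it takes a different route from the paper's. The paper argues as follows: first, if $M_{1}\subseteq M_{2}$ are both existentially closed, then $M_{1}\prec M_{2}$, because Claim \ref{cla:QEAlmost} reduces any formula with free variables to a quantifier-free one (and a sentence $\varphi$ is handled by passing to $\varphi\wedge(x=x)$, which has a free variable); quantifier-free formulas are absolute between substructures. Then completeness for arbitrary e.c.\ $M_{1},M_{2}$ follows from JEP and universality: jointly embed both into some $N$, extend $N$ to an e.c.\ $N'$, and conclude $M_{1}\prec N'\succ M_{2}$.

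Your argument bypasses JEP entirely: you reduce a sentence to $\exists x\,\psi(x)$ with $\psi$ quantifier-free and then invoke Lemma \ref{lem:quElLemma} directly over the empty tuple, using that $L_{S}$ has no constants so $\cl^{(m)}(\emptyset)=\emptyset$ and $\emptyset\equiv_{m}\emptyset$ holds vacuously. This is a legitimate and self-contained alternative; the trade-off is that the paper's route is the standard model-companion argument and does not require inspecting whether the back-and-forth lemma was stated so as to admit empty parameter tuples, while yours avoids the extra appeal to JEP. Your inside-out prenex reduction is also fine, though note that the paper's one-line trick $\varphi\mapsto\varphi\wedge(x=x)$ accomplishes the same reduction to a formula with a free variable in a single step.
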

\begin{proof}
Assume first that $M_{1}\subseteq M_{2}$, then $M_{1}\prec M_{2}$:
for formulas with free variables it follows directly from the previous
claim, and for a sentence $\varphi$ we consider the formula $\varphi\wedge\left(x=x\right)$.

Now the corollary follows from the fact that the theory is universal
(so every model can be extended to an existentially closed one) and
has JEP. \end{proof}
\begin{defn}
Let $S$ be a finite standard tree. Let $T_{S}$ be the theory of
all existentially closed models of $T_{S}^{\forall}$. 
\end{defn}
From Corollary \ref{cor:QE} and the definition of model completion,
we deduce:
\begin{cor}
\label{cor:Model completion}Let $S$ be a finite standard tree. Then
$T_{S}$ is the model completion of $T_{S}^{\forall}$. The theory
$T_{S}$ eliminates quantifiers. Thus $T_{S}^{\forall}$ has AP. 
\end{cor}

\subsection*{NIP}

In this section we will show that $T_{S}$ is dependent. The idea
is to count the number of $\Delta$-types for finite $\Delta$ over
a finite set of parameters $A$, and to show that this number is polynomial
in $\left|A\right|$. Thus, from Fact \ref{fac:DepPolBd} it follows
that $T_{S}$ is dependent. In fact, we will show that we can find
such polynomials $f_{\Delta}$ such that their \uline{degree} does
not depend on $\Delta$, but only on the number of free variables
and on $S$. From this, by Lemma \ref{lem:strong dep. pol bound}
we will conclude that $T_{S}$ is not just dependent but even strongly$^{2}$
dependent.
\begin{defn}
Suppose $S$ is a finite standard tree. Assume $A\subseteq M\models T_{S}$
is a finite set and $k<\omega$.
\begin{enumerate}
\item We say that $a,b\in M$ are $k$-isomorphic over $A$, denoted by
$a\equiv_{A,k}^{S}b$ iff for some (any) enumeration $\bar{a}$ of
$A$, $a\bar{a}\equiv_{k}^{S}b\bar{a}$.
\item Similarly for tuples from $M^{<\omega}$.
\end{enumerate}
\end{defn}
\begin{claim}
\label{cla:TypesOver} Suppose $S$ is a finite standard tree. Assume
$M\models T_{S}^{\forall}$, $k<\omega$, $A\subseteq M$ is finite
and $\bar{a},\bar{b}\in M^{<\omega}$. Then $\bar{a}\equiv_{A,k}\bar{b}$
iff $\tp_{k}\left(\bar{a}/A\right)=\tp_{k}\left(\bar{b}/A\right)$
iff for every quantifier free formula $\varphi\left(\bar{x}\right)$
over $A$ such that $r_{\suc}\left(\varphi\right)\leq k$, $M\models\varphi\left(\bar{a}\right)\leftrightarrow\varphi\left(\bar{b}\right)$. \end{claim}
\begin{proof}
Follows from the definitions and from Claim \ref{cla:quFreeTpEquiv}.\end{proof}
\begin{prop}
\label{prop:NIPS1} Assume $\left|S\right|=1$ and $k<\omega$. Then
there is a polynomial $p_{k}$ over $\Nn$ such that for every model
$M\models T_{S}^{\forall}$ and for every finite set $A\subseteq M$,
$\left|\left\{ M/\equiv_{A,k}\right\} \right|\leq p_{k}\left(\left|A\right|\right)$.
Moreover, we can choose $\sequence{p_{k}}{k<\omega}$ so that $p_{k}$
is linear for all $k$. \end{prop}
\begin{proof}
As $\left|S\right|=1$, we can forget the index $\eta$ and write
$<,\limb$, etc. instead of $<_{\eta},\limb_{\eta}$, etc.

Suppose $M\models T_{S}^{\forall}$. Given $a<b\in M$, the $k$-distance
between them is defined by 
\[
d_{k}\left(a,b\right)=\min\left\{ d\left(a,b\right),2k+1\right\} .
\]

Assume $a\in M$ and $A\subseteq M$ is finite.

Let $B=\cl^{\left(0\right)}\left(A\right)$ and $l=\left|B\right|$.
Recall that $l\leq f_{0}^{S}\left(\left|A\right|\right)$ where $f_{0}^{S}$
is a linear function (see Claim \ref{cla:PolBoundOnCl}). We will
divide the possible $k$-isomorphism type of $a$ over $A$ into finitely
many cases, and in each case the number of possible types will be
linear in $l$ (so linear in $\left|A\right|$). 
\begin{casenv}
\item $a\notin P$. Here there is no structure, so the number of types is
$\left|A\right|+1$. 
\item $a\in P$, and there is some $b\in B$ such that $a\leq b$. We further
divide into sub-cases:

\begin{casenv}
\item $a\in B$. In that case there are at most $l$ types. 
\item There is no $b\in B$ such that $b<a$. In that case, since $B$ is
closed under $\wedge$, $a$ is smaller than $b$ for all $b\in B$.
In this case it is enough to know the $k$-distance between $a$ and
$\lim\left(a\right)$. So there are $2k+1$ types. 
\item \label{cas:pair}There is some $b\in B$ such that $b<a$. Choose
$b_{0},b_{1}\in B$ such that $b_{1}$ is minimal with the property
that $a<b_{1}$ and $b_{0}$ is maximal such that $b_{0}<a$. Since
$B$ can also be viewed as a finite graph-theoretic tree and as such
has $l-1$ edges, we have at most $l-1$ such pairs. 

\begin{casenv}
\item $\lim\left(b_{0}\right)<\lim\left(b_{1}\right)$. Note that it follows
that $\lim\left(b_{1}\right)=b_{1}$. 

\begin{casenv}
\item $\lim\left(b_{0}\right)<\lim\left(a\right)$. Then the type is determined
by the $k$-distance between $a$ and $\lim\left(a\right)$, so there
are at most $2k+1$ types here. 
\item $\lim\left(b_{0}\right)=\lim\left(a\right)$. The type is determined
by the $k$-distance between $a$ and $b_{0}$, so again there are
at most $2k+1$ types.
\end{casenv}
\item $\lim\left(b_{0}\right)=\lim\left(b_{1}\right)$. In this case $\lim\left(b_{1}\right)=\lim\left(a\right)$.
The type is determined by the $k$-distance between $a$ and $b_{0}$
and the $k$ distance between $a$ and $b_{1}$. So totally there
are at most $4k+2$ types. 
\end{casenv}

So in this case (Case \ref{cas:pair}) there are at most $\left(l-1\right)\cdot\left(4k+2\right)$
many types. 

\end{casenv}
\item \label{cas:new branch}$a\in P$, and there is no $b\in B$ such that
$a\leq b$. Let $a'=\max\set{a\wedge b}{b\in B}$. Since there is
some $b\in B$ such that $a'\leq b$, the number of possible $k$-isomorphism
types of $a'$ over $A$ is bounded by $h\left(l\right)$ where $h$
is a linear map. Fix $\tp_{k}\left(a'/A\right)$. 

\begin{casenv}
\item $\lim\left(a\right)=\lim\left(a'\right)$. Here the type is determined
by the $k$-distance between $a$ and $a'$, so there are at most
$2k+1$ types.
\item $\lim\left(a\right)>\lim\left(a'\right)$. Here the type is determined
by the $k$-distance between $a$ and $\lim\left(a\right)$, so there
are at most $2k+1$ types. 
\end{casenv}

So in this case (Case \ref{cas:new branch}) there are at most $h\left(l\right)\cdot\left(4k+2\right)$
types. 

\end{casenv}
\end{proof}
\begin{defn}
Let $S$ be a finite standard tree, and $n<\omega$. Say that $S$
is\emph{ $n$-nice }if there is a number $N<\omega$ and a sequence
of polynomials $\sequence{p_{k}^{S}}{k<\omega}$ over $\Nn$, \uline{whose
degrees are bounded by $N$} such that for every model $M\models T_{S}^{\forall}$
and finite $A\subseteq M$, $\left|\left\{ M^{n}/\equiv_{A,k}\right\} \right|\leq p_{k}^{S}\left(\left|A\right|\right)$.
Say that $S$ is nice if it is\emph{ $n$-nice} for all $n<\omega$.
\end{defn}
From Proposition \ref{prop:NIPS1} we get:
\begin{cor}
If $\left|S\right|=1$, then $S$ is $1$-nice. \end{cor}
\begin{lem}
\label{lem:same degree continues}Suppose $S$ is a $1$-nice finite
standard tree. Then it is nice. \end{lem}
\begin{proof}
We may restrict our attention to models of $T_{S}$ (i.e., existentially
closed models of $T_{S}^{\forall}$), since every model of $T_{S}^{\forall}$
extends to a model of $T_{S}$, and the number of $k$-isomorphism
types can only increase. 

The proof is by induction on $n$. For $n=1$ this is the assumption,
so assume it holds for every $l\leq n$. Fix some polynomials $\sequence{p_{k,l}}{k<\omega,0<l\leq n}$
that witness $l$-niceness for all $l\leq n$. We will show that the
polynomials defined by $p_{k,n+1}^{S}\left(X\right)=p_{k',n}\left(X\right)\cdot p_{k,1}\left(X+1\right)$
with $k'=m_{2}\left(k,n,S\right)$ (see Lemma \ref{lem:quElLemma})
bound the number of $k$-isomorphism types. By induction, their degree
is bounded by a constant number, regardless of $k$.

We use Claim \ref{cla:TypesOver}, namely that we can identify the
number of $k$-isomorphism types and the number of $k$-types (see
Definition \ref{def:k-type}). 

Suppose $A$ is a finite subset of a model $M\models T_{S}$. For
every $k,m<\omega$ let $\Delta_{k}^{m}=\Delta_{k}^{\bar{x}\bar{y}}$
where $\lg\left(\bar{x}\right)=m$ and $\lg\left(\bar{y}\right)=\left|A\right|$.
Let $Q=S_{\Delta_{k}^{n+1}}\left(A\right)$. For each type $r\in Q$,
choose a realization $\left(\bar{a}_{r},b_{r}\right)$ where $\lg\left(\bar{a}_{r}\right)=n$.
Let $E$ be the equivalence relation on $Q$ defined by $r\mathrela Er'$
iff $b_{r}\equiv_{A,k'}b_{r'}$. Without loss of generality, for all
$r,r'\in Q$, if $r\mathrela Er'$ then $b_{r}=b_{r'}$: choose representatives
$\sequence{r_{i}}{i<l}$ for all the $E$-classes. Fix some $i<l$
and $r\mathrela Er_{i}$. Enumerate $A$ as $\bar{a}$. Since $b_{r}\bar{a}\equiv_{k'}b_{r_{i}}\bar{a}$,
by Lemma \ref{lem:quElLemma} there is some $\bar{a}_{r}'\in M^{n}$
such that $\bar{a}_{r}b_{r}\bar{a}\equiv_{k}\bar{a}_{r}'b_{r_{i}}\bar{a}$,
i.e., $\bar{a}_{r}b_{r}\equiv_{A,k}\bar{a}_{r}'b_{r_{i}}$, so we
can replace $\left(\bar{a}_{r},b_{r}\right)$ by $\left(\bar{a}_{r}',b_{r_{i}}\right)$.
Now for each $E$-equivalence class $C\subseteq Q$, the map $r\mapsto\tp_{k}^{S}\left(\bar{a}_{r}/A\cup\left\{ b_{r}\right\} \right)$
from $C$ to $S_{\Delta_{k}^{n}}\left(A\cup\left\{ b_{r}\right\} \right)$
is injective, so $\left|C\right|\leq p_{k,n}^{S}\left(\left|A\right|+1\right)$.
The number of $E$-classes is bounded by $p_{k',1}^{S}\left(\left|A\right|\right)$,
so we are done. \end{proof}
\begin{thm}
\label{thm:T is nice} Suppose $S$ is a finite standard tree. Then
it is nice. \end{thm}
\begin{proof}
The proof is by induction on $\left|S\right|$. For $\left|S\right|=1$
it follows from Proposition  \ref{prop:NIPS1} and Lemma \ref{lem:same degree continues}
(and for $\left|S\right|=0$ it is obvious).

Assume $1<\left|S\right|$. By Lemma \ref{lem:same degree continues},
it is enough to show that $S$ is $1$-nice. 

Let $\eta_{0}$ be the root of $S$, $S_{0}=\left\{ \eta_{0}\right\} $
and let $S=\bigcup\left\{ S_{i}\left|\, i<m\right.\right\} $ where
for $1\leq i<m$ the $S_{i}$'s are the connected components of $S$
above $\eta_{0}$. For $i\leq m$, let $P_{S_{i}}=\bigvee\left\{ P_{\eta}\left|\,\eta\in S_{i}\right.\right\} $.
For $i<m$, let $\eta_{i}=\min\left(S_{i}\right)$. Suppose $\sequence{p_{k,n}^{i}}{k,n<\omega,i<m}$
witness that $S_{i}$ are nice. Suppose the degree of $p_{k,n}^{i}$
is bounded by $N_{n}$ for all $k,n<\omega$ and $i<m$. We may assume
that $p_{k,n}^{i}\leq p_{k,n+1}^{i}$ and $N_{n}\leq N_{n+1}$ for
all $k,n<\omega$ and $i<m$. 

Assume $A\subseteq M\models T_{S}^{\forall}$ is finite and $a\in M$.
We will divide the possible $k$-isomorphism types of $a$ over $A$
into finitely many cases. In each case we will have a polynomial bound
(in terms of $\left|A\right|$) on the number of types. This polynomial
will have degree at most $m\cdot N_{K}$ where $K=3$. Since $M,A$
and $a$ were arbitrary this will show that $S$ is $1$-nice. 

Let $A_{i}=\cl^{\left(k\right)}\left(A\right)\cap P_{S_{i}}^{M}$. 
\begin{casenv}
\item $a\notin P_{\eta}^{M}$ for all $\eta\in S$. In that case there are
at most $\left|A\right|+1$ types. 
\item $a\in P_{\eta_{i}}^{M}$ for some $1\leq i<m$. It is enough to determine
$\tp_{k}^{S_{i}}\left(a/A_{i}\right)$. If $\tp_{k}^{S_{i}}\left(a/A_{i}\right)=\tp_{k}^{S_{i}}\left(b/A_{i}\right)$,
then $a\equiv_{A_{i},k}b$ (by Claim \ref{cla:TypesOver}), so there
is an $L_{S_{i}}'$ isomorphism $f':\cl^{\left(k\right)}\left(A_{i}a\right)\to\cl^{\left(k\right)}\left(A_{i}b\right)$
taking $a$ to $b$ and fixing $A_{i}$. Define $f:\cl^{\left(k\right)}\left(Aa\right)\to\cl^{\left(k\right)}\left(Ab\right)$
by 
\[
\left(f'\upharpoonright\cl^{\left(k\right)}\left(A\cup\left\{ a\right\} \right)\cap P_{S_{i}}^{M}\right)\cup\left(\id\upharpoonright\cl^{\left(k\right)}\left(A\right)\right).
\]
This is an isomorphism. Now, note that $\left|A_{i}\right|\leq f_{k}^{S_{i}}\left(\left|A\right|\right)$
which is linear in $\left|A\right|$ (see Claim \ref{cla:PolBoundOnCl}),
and the number of types over $A_{i}$ is bounded by $p_{k,1}^{i}\left(\left|A_{i}\right|\right)\leq p_{k,1}^{i}\left(f_{k}^{S_{i}}\left(\left|A\right|\right)\right)$. 
\item $a\in P_{\eta_{0}}$. Let $B=A\cap P_{\eta_{0}}^{M}$. First we determine
$\tp_{k}^{S_{0}}\left(a/B\right)$, for this we have at most $p_{k,1}^{0}\left(\left|A\right|\right)$
many possibilities. Fix one such type.

Suppose $a\equiv_{B,k}^{S_{0}}b$. Let $f'$ be an $L_{S_{0}}'$-isomorphism
such that $B\cup\left\{ a\right\} \xrightarrow[k]{S_{0},f'}B\cup\left\{ b\right\} $,
$f'$ fixes $B$ and takes $a$ to $b$. Let $F=\cl^{\left(k\right)}\left(A\cup\left\{ a\right\} \right)\cap P_{\eta_{0}}^{M}$
and $F'=f'\left(F\right)$, so that $f$ is an $L_{S_{0}}'$ isomorphism
between $F$ and $F'$. By Claim I (1) in the proof of Lemma \ref{lem:quElLemma},
there are at most $K$ (i.e., $3$)  $\sim_{\eta_{0}}^{F}$-classes
in $F$ that are not already in $\cl^{\left(k\right)}\left(A\right)$,
suppose there are $K'\leq K$ such classes. Let $\bar{b}$ be an enumeration
of $B$, and $\bar{y}$ a tuple of variables of the same length. If
$\sequence{t_{i}\left(x,\bar{y}\right)}{i<K'}$ are terms from $\cl^{\left(k\right),S_{0}}\left(x\bar{y}\right)$
such that the new classes are exactly $\set{\left[t_{i}\left(a,\bar{b}\right)\right]_{\sim_{\eta_{0}}^{F}}}{i<K'}$,
then the new classes in $F'$ are $\set{\left[t_{i}\left(b,\bar{b}\right)\right]_{\sim_{\eta_{0}}^{F'}}}{i<K'}$.
This means that we can fix such terms depending only on $\tp_{k}^{S_{0}}\left(a/B\right)$.
Now it is enough to determine $\tp_{k}^{S_{i}}\left(\sequence{G_{\eta_{0},\eta_{i}}\left(t_{l}\left(a,\bar{b}\right)\right)}{l<K'}/A_{i}\right)$
for each $1\leq i<m$.

Indeed, suppose that $a,b$ and $f'$ are as above and moreover for
each $1\leq i<m$, $\sequence{G_{\eta_{0},\eta_{i}}\left(t_{l}\left(a,\bar{b}\right)\right)}{l<K'}\equiv_{k,A_{i}}\sequence{G_{\eta_{0},\eta_{i}}\left(t_{l}\left(b,\bar{b}\right)\right)}{l<K'}$.
Let $g_{i}$ be an $L_{S_{i}}'$-isomorphism fixing $A_{i}$ witnessing
this. Then 
\[
\id\upharpoonright\cl^{\left(k\right)}\left(A\right)\cup f'\cup\bigcup_{1\leq i<m}\left(g_{i}\upharpoonright\cl^{\left(k\right)}\left(A\cup\left\{ a\right\} \right)\cap P_{S_{i}}^{M}\right)
\]
 is an $L_{S}'$-isomorphism showing that $a\equiv_{A}^{k}b$. This
follows from the fact that if $e\sim_{\eta_{0}}^{F}e'$ then $G_{\eta_{0},\eta_{i}}\left(e\right)=G_{\eta_{0},\eta_{i}}\left(e'\right)$.

In this case there are at most $p_{k,1}^{0}\left(\left|A\right|\right)\cdot\prod_{1\leq i<m}p_{k,K}^{i}\left(f_{k}^{S_{i}}\left(\left|A\right|\right)\right)$
types (here we used the assumption that $p_{k,K'}^{i}\leq p_{k,K}^{i}$). 

\end{casenv}
\end{proof}
\begin{cor}
\label{cor:Strongly dep T_S}Suppose $S$ is a finite standard tree.
Then $T_{S}$ is strongly$^{2}$-dependent.\end{cor}
\begin{proof}
We will apply Lemma \ref{lem:strong dep. pol bound}.

Let $\Delta\left(\bar{x};\bar{y}\right)$ be a finite set of formulas.
By quantifier elimination, we may assume that $\Delta$ is quantifier
free. Let $k=\max\left\{ r_{\suc}\left(\varphi\right)\left|\,\varphi\in\Delta\right.\right\} $
and $m=\left|S_{\Delta\left(\bar{x};\bar{y}\right)}\left(A\right)\right|$.
Let $\left\{ \bar{c}_{i}\left|\, i<m\right.\right\} $ be a set of
tuples satisfying all the different types in $S_{\Delta\left(\bar{x};\bar{y}\right)}\left(A\right)$
in some model $M$ of $T_{S}$. If $i\neq j$ then $\tp_{k}\left(\bar{c}_{i}/A\right)\neq\tp_{k}\left(\bar{c}_{j}/A\right)$
(by Claim \ref{cla:TypesOver}), so $m\leq\left|\left\{ M^{\lg\left(\bar{x}\right)}/\equiv_{A,k}\right\} \right|$,
and hence we are done by Theorem \ref{thm:T is nice}. 
\end{proof}
So far we mostly assumed that $S$ is finite. Now we will let $S$
be any standard tree. 
\begin{cor}
\label{cor:Sinfinite} Suppose $S$ is a standard tree. If $M\models T_{S}^{\forall}$
then since 
\[
Th\left(M\right)=\bigcup\left\{ Th\left(M\upharpoonright L_{S_{0}}\right)\left|\, S_{0}\subseteq S\,\&\,\left|S_{0}\right|<\aleph_{0}\right.\right\} ,
\]
by Remark \ref{rem:TUnivJEP}, Corollary \ref{cor:QE} is true in
the case where $S$ is infinite. So $T_{S}$ is well defined in this
case as well and it is in fact $\bigcup\left\{ T_{S_{0}}\left|\, S_{0}\subseteq S\,\&\,\left|S_{0}\right|<\aleph_{0}\right.\right\} $.
It eliminates quantifiers and is dependent.
\end{cor}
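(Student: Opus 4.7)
The plan is to reduce every assertion to the already-established finite-$S$ case, using the trivial but crucial observation that each first-order formula in $L_S$ mentions only finitely many symbols and hence lies in $L_{S_0}$ for some finite $S_0\subseteq S$. The engine that makes this reduction work is Remark \ref{rem:TUnivJEP}(3): if $M$ is an existentially closed model of $T_S^{\forall}$ and $S_0\subseteq S$ is finite, then $M\upharpoonright L_{S_0}$ is an existentially closed model of $T_{S_0}^{\forall}$.

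First, I would check that any two e.c.\ models $M_1,M_2$ of $T_S^{\forall}$ satisfy $M_1\equiv M_2$. Given a sentence $\sigma$, pick finite $S_0\subseteq S$ with $\sigma\in L_{S_0}$. By Remark \ref{rem:TUnivJEP}(3), each $M_i\upharpoonright L_{S_0}$ is an e.c.\ model of $T_{S_0}^{\forall}$, so by Corollary \ref{cor:QE} (the finite case) their theories coincide, and in particular they agree on $\sigma$. Since $\sigma$ holds in $M_i$ iff it holds in $M_i\upharpoonright L_{S_0}$, we obtain $M_1\models\sigma\Leftrightarrow M_2\models\sigma$. This shows $T_S$ is well-defined. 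The same reasoning gives the identification $T_S=\bigcup\{T_{S_0}\mid S_0\subseteq S\text{ finite}\}$: for $\sigma\in L_{S_0}$, $\sigma\in T_S$ iff $\sigma$ holds in some e.c.\ $M\models T_S^{\forall}$ iff $\sigma$ holds in $M\upharpoonright L_{S_0}$ iff $\sigma\in T_{S_0}$.

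Next, I would deduce quantifier elimination. Given $\varphi(\bar x)\in L_S$, let $S_0\subseteq S$ be finite and contain every symbol appearing in $\varphi$. By Claim \ref{cla:QEAlmost} applied to $T_{S_0}^{\forall}$, there is a quantifier-free $\psi(\bar x)\in L_{S_0}$ equivalent to $\varphi$ in every e.c.\ model of $T_{S_0}^{\forall}$. Whenever $M\models T_S$, the reduct $M\upharpoonright L_{S_0}$ is such an e.c.\ model, so $M\upharpoonright L_{S_0}\models\varphi\leftrightarrow\psi$; but both formulas are in $L_{S_0}$ and interpret identically in $M$, so $M\models\varphi\leftrightarrow\psi$ as well.

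Finally, for dependence: suppose towards contradiction that $T_S$ had the independence property, witnessed by a formula $\varphi(\bar x,\bar y)\in L_S$ and arrays $\langle\bar a_i,\bar b_s\mid i<\omega,\, s\subseteq\omega\rangle$ in some $M\models T_S$. Choose finite $S_0\subseteq S$ containing every symbol of $\varphi$; then the same arrays witness IP for $\varphi$ in the reduct $M\upharpoonright L_{S_0}\models T_{S_0}$, contradicting Corollary \ref{cor:T_SNIP}. Hence $T_S$ is dependent. No step here is a real obstacle --- the only point requiring mild care is invoking Remark \ref{rem:TUnivJEP}(3) to pass freely between e.c.\ models of $T_S^{\forall}$ and of $T_{S_0}^{\forall}$; every other move is standard locality of first-order formulas.
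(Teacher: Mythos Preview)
Your proposal is correct and follows exactly the approach the paper indicates in the statement of the corollary itself: reduce every first-order assertion to a finite sublanguage $L_{S_0}$, invoke Remark~\ref{rem:TUnivJEP}(3) to know that reducts of e.c.\ models remain e.c., and then apply the finite-$S$ results (Corollary~\ref{cor:QE}, Claim~\ref{cla:QEAlmost}, Corollary~\ref{cor:T_SNIP}). The paper gives no separate proof beyond the sentence embedded in the corollary, so your write-up is simply a faithful unpacking of that sketch.
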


\subsection*{Adding Constants}

We want to find an example of every cardinality, and so we add constants
to the language. For a cardinal $\theta$, the theory $T_{S}^{\theta}$
will be $T_{S}$ augmented with the quantifier free diagram of a model
of $T_{S}^{\forall}$ of cardinality $\theta$. The simplest thing
to do is to add $\theta$-many constants that do not belong to any
$P_{\eta}$. The problem with this approach is that the induction
would not work in the proof of the main theorem. So instead we put
a tree of constants in every $P_{\eta}$. Formally:
\begin{defn}
\label{def:T_S^Theta}Let $S$ be a standard tree. For a cardinal
$\theta$, let $L_{S}^{\theta}=L_{S}\cup\left\{ e_{\eta,i}\left|\, i<\theta,\eta\in S\right.\right\} $
where $\left\{ e_{\eta,i}\left|\, i<\theta,\eta\in S\right.\right\} $
are new constants. Let $T_{S}^{\forall,\theta}$ be the theory $T_{S}^{\forall}$
with the axioms stating that for all $\eta,\eta_{1},\eta_{2}\in S$
and $i,j,i',j'<\theta$ such that $\eta_{1}<_{\suc}\eta_{2}$,\end{defn}
\begin{itemize}
\item $e_{\eta,i}\in P_{\eta}$,
\item $i\neq j\Rightarrow e_{\eta,i}\neq e_{\eta,j}$,
\item $i\neq j,i'\neq j'\Rightarrow e_{\eta,i}\wedge_{\eta}e_{\eta,j}=e_{\eta,i'}\wedge_{\eta}e_{\eta,j'}$,
\item $\eta_{1}<_{\suc}\eta_{2}\Rightarrow G_{\eta_{1},\eta_{2}}\left(e_{\eta_{1},i}\right)=e_{\eta_{2},i}$,
\item $\minb_{\eta}\left(e_{\eta,i}\wedge e_{\eta,j}\right)=e_{\eta,i}\wedge e_{\eta,j}$
and
\item $\suc_{\eta}\left(e_{\eta,i}\wedge e_{\eta,j},e_{\eta,i}\right)=e_{\eta,i}$.\end{itemize}
\begin{cor}
\label{cor:TSThetaModelCompletion} Suppose $S$ is a standard tree.
\begin{enumerate}
\item $T_{S}^{\forall,\theta}$ has JEP and AP.
\item $T_{S}^{\forall,\theta}$ has a model completion --- $T_{S}^{\theta}$
--- that is complete, dependent and has quantifier elimination.
\item Given any model $M\models T_{S}^{\forall}$, there is a model $M'\models T_{S}^{\forall,\theta}$
satisfying $M'\upharpoonright L_{S}\supseteq M$.
\item If $S$ is finite then $T_{S}^{\theta}$ is strongly$^{2}$ dependent. 
\end{enumerate}
\end{cor}
\begin{proof}
(1) This follows from Corollary \ref{cor:Model completion} (noting
that JEP for $T_{S}^{\forall,\theta}$ follows from AP for $T_{S}^{\forall}$). 

(2) Since $T_{S}$ is the model completion of $T_{S}^{\forall}$ and
$T_{S}^{\forall,\theta}$ is the quantifier free diagram of a model
of $T_{S}^{\forall}$, $T_{S}^{\theta}=T_{S}\cup T_{S}^{\forall,\theta}$
is a complete theory. Since we only added constants, $T_{S}^{\theta}$
is dependent and has quantifier elimination. 

(3) This follows from JEP for $T_{S}^{\forall}$. 

(4) This follows from Corollary \ref{cor:Strongly dep T_S}. 
\end{proof}

\section{\label{sec:The-inaccessible-case}The inaccessible case}

In this section we will deal with the main technical obstacle in proving
Main Theorem \ref{mainthmA}. The proof, which will be described in
Section \ref{sec:The-main-theorem}, is by induction in the following
sense: for $\Ss=2^{<\omega}$, cardinals $\kappa,\theta$ and a limit
ordinal $\delta\geq\omega$ such that $\kappa\not\to\left(\delta\right)_{\theta}^{<\omega}$,
we will find a model $M\models T_{\Ss}^{\forall,\theta}$ and a set
$A\subseteq P_{\left\langle \right\rangle }^{M}$ of size $\left|A\right|\geq\kappa$
with no non-constant indiscernible sequence in $A^{\delta}$. We are
allowed to use induction since $\lambda\not\to\left(\delta\right)_{\theta}^{<\omega}$
for all $\lambda<\kappa$. We divide into cases, namely $\kappa\leq\theta$
, $\kappa$ singular and $\kappa$ regular but not strongly inaccessible.
The main problem is in the remaining case, i.e., when $\kappa$ is
strongly inaccessible. In all other cases, the proof will follow by
induction without using explicitly the fact that $\kappa\not\to\left(\delta\right)_{\theta}^{<\omega}$.
\begin{assumption}
\label{ass:NoColoring} Assume for this section that $\theta<\kappa$
are cardinals, $\delta\geq\omega$ is a limit ordinal and that $\kappa$
is strongly inaccessible such that $\kappa\not\to\left(\delta\right)_{\theta}^{<\omega}$.
\end{assumption}
This section is divided into two subsections. 

In the first subsection we define a class $\mathcal{T}$ of models
of $T_{\omega}^{\forall,\theta}$ (here $S=\omega$, with the tree
structure being the usual order on $\omega$). We will analyze sequences
of elements in models in $\mathcal{T}$ that are close to being indiscernible.
There are two main results here, the first (Proposition \ref{prop:Dichotomy-1-1})
says that sequences (of singletons) that are closed to being indiscernible
can have two forms: ``almost increasing'' and ``fan''. ``Almost
increasing'' means that $s_{i}\wedge s_{i+1}<s_{i+1}\wedge s_{i+2}$,
and ``fan'' means that $s_{i}\wedge s_{j}$ is constant. The second
result (Corollary \ref{cor:TheHFunction-1-1}) deals with applying
a specific definable map on sequences. Given an almost increasing
sequence $\bar{s}$, let $H\left(\bar{s}\right)=\bar{t}$ where $t_{i}=G\left(\suc\left(\minb\left(s_{i}\wedge s_{i+1}\right),s_{i+1}\right)\right)$
(where $G$ is some $G_{n,n+1}$, recall that here $S=\omega$). We
will show that if applying $H$ again and again we always get an almost
increasing sequence, then this almost increasing sequence will satisfy
$\suc\left(\minb\left(t_{i}\wedge t_{i+1}\right),t_{i}\right)=t_{i}$.

In the second subsection we will construct a model in $\mathcal{T}$
that uses explicitly a witness of $\kappa\not\to\left(\delta\right)_{\theta}^{<\omega}$.
For this model, $P_{0}=\kappa$. We will show, applying the analysis,
that if we have an indiscernible sequence in $P_{0}$ such that applying
$H$ to it again and again results in almost increasing sequences,
then there is a homogeneous sub-sequence of $\kappa$ of length $\delta$,
contradicting the assumption. So after applying $H$ finitely many
times we must get a fan. This model will come equipped with equivalence
relations on the trees $P_{n}$, which refines the neighboring relation
($x,y$ are neighbors if they succeed the same element). The point
is that the number of classes inside a given neighborhood will be
less than $\kappa$. This will enable us to use the induction hypothesis
in the proof of Main Theorem \ref{mainthmA}. 

The models in $\mathcal{T}$ will be standard in the following sense:
\begin{defn}
\label{def:standard model} Suppose $S$ is a standard tree. Call
a model of $T_{S}^{\forall}$ \emph{standard} if for every $\eta\in S$,
$\left(P_{\eta},<_{\eta}\right)$ is a standard tree, and $\wedge_{\eta},\lim_{\eta},\suc_{\eta}$
are all interpreted in the natural way (so $\lim_{\eta}\left(a\right)$
is the greatest element $\leq a$ of a limit level). 
\end{defn}
Let us fix some notation:
\begin{notation}
\label{not: what is indiscernible, modulo}Suppose $S$ is the standard
tree $\omega$ with the usual ordering. Assume $M\models T_{S}^{\forall}$
and $x,y\in P_{\eta}^{M}$.
\begin{enumerate}
\item When we say indiscernible, we shall always mean indiscernible for
quantifier free formulas.
\item We say that $x\equiv0\modp{\omega}$ when $x=\limb\left(x\right)$.
For $n<\omega$, we say that $x\equiv n+1\modp{\omega}$ where $x\neq\limb_{\eta}\left(x\right)$
and $\pre_{\eta}\left(x\right)\equiv n\modp{\omega}$. Note that for
a fixed $n$, the set $\left\{ x\left|\, x\equiv n\modp{\omega}\right.\right\} $
is quantifier free definable. In addition, if $M$ is standard, then
for every $x$ there is some $n<\omega$ such that $x\equiv n\modp{\omega}$
(where $n$ is the unique number satisfying $\lev\left(x\right)=\alpha+n$
for a limit ordinal $\alpha$).
\item Say that $x\equiv y\modp{\omega}$ if there is $n<\omega$ such that
$x\equiv n\modp{\omega}$ and $y\equiv n\modp{\omega}$.
\item Instead of $G_{n,n+1}$ we write $G_{n}$.
\end{enumerate}
\end{notation}

\subsection*{Analysis of indiscernibles in $\Tt$}
\begin{defn}
\label{def:ClassT}Let $\mathcal{T}$ be the class of models $M\models T_{\omega}^{\forall}$
that satisfy:
\begin{enumerate}
\item $M$ is standard (see Definition \ref{def:standard model}). 
\item \label{enu:GDecreasing} For $t\in P_{n}$, $\lev\left(G_{n}\left(t\right)\right)\leq\lev\left(t\right)$.
\item \label{enu:imageSuc}$G_{n}:\Suc\left(P_{n}\right)\to\Suc\left(P_{n}\right)$
(i.e., we demand that the image is also a successor).
\item \label{enu:GInjective} If $\left\langle s_{i}\left|\, i<\delta\right.\right\rangle $
is an increasing sequence in $\Suc\left(P_{n}\right)$ such that $s_{i}\equiv s_{j}\modp{\omega}$
for all $i<j<\delta$ then $i<j\Rightarrow G_{n}\left(s_{i}\right)\neq G_{n}\left(s_{j}\right)$.
\end{enumerate}
\end{defn}
\begin{notation}
\label{not:neighbors}For $M\in\mathcal{T}$ and $n<\omega$,
\begin{enumerate}
\item \label{enu:Normal} We say that $s,t\in P_{n}^{M}$ are neighbors,
denoted by $t\mathrela{E^{\nb}}s$ when $\left\{ x\left|\, x<t\right.\right\} =\left\{ x\left|\, x<s\right.\right\} $.
This is an equivalence relation. As $P_{n}$ is a normal tree, for
$t$ of a limit level its $E^{\nb}$-class is $\left\{ t\right\} $.
\item Let $\Suc\left(M\right)=\bigcup\left\{ \Suc\left(P_{n}^{M}\right)\left|\, n<\omega\right.\right\} $.
\item $\bar{s}$, $\bar{t}$ and $\bar{r}$ will denote $\delta$-sequences,
e.g., $\bar{s}=\left\langle s_{i}\left|\, i<\delta\right.\right\rangle $.
\item If $\bar{s}$ is contained in some $P_{n}^{M}$ and $n$ is clear
from the context or insignificant, then we write $<$ instead of $<_{n}$
etc.
\end{enumerate}
\end{notation}
\begin{defn}
Recall that given $\delta^{*}\geq\omega$ and an indiscernible sequence
$\bar{s}=\sequence{s_{i}}{i<\delta^{*}}$, its quantifier free Ehrenfeucht-Mostowski
type (or in short quantifier free EM-type) is defined as $\sequence{\tp_{\qf}\left(s_{0},\ldots,s_{n-1}\right)}{n<\omega}$.
In general, a quantifier free EM-type is a sequence $\bar{p}=\sequence{p_{n}}{n<\omega}$
such that $p_{n}\in S_{n}^{\qf}\left(\emptyset\right)$. 
\end{defn}
We need the following generalization of indiscernible sequences for
$\mathcal{T}$:
\begin{defn}
\label{def:NI-1-1} A sequence $\bar{s}=\left\langle s_{i}\left|\, i<\delta\right.\right\rangle $
is called \emph{nearly indiscernible} (in short \emph{NI}) if:
\begin{enumerate}
\item There is $n<\omega$ and an EM-type $\bar{p}=\sequence{p_{k}\in S_{k}^{\qf}\left(\emptyset\right)}{k<\omega}$
such that if $i_{0}<\cdots<i_{k-1}<\delta$ and $i_{j}+n\leq i_{j+1}$
for all $j<k$, then $\left(s_{i_{0}},\ldots,s_{i_{k-1}}\right)\models p_{k}$.
(So for $\delta^{*}\leq\delta$ every sub-sequence $\left\langle s_{i_{j}}\left|\, j<\delta^{*}\right.\right\rangle $
with $i_{j}+n\leq i_{j+1}<\delta$ is indiscernible and its quantifier
free EM-type is $\bar{p}$.) We call this property \emph{sparseness}.
\item For $i,j<\delta$ and $k<\omega$, $\tp_{\qf}\left(s_{i},\ldots,s_{i+k}\right)=\tp_{\qf}\left(s_{j},\ldots,s_{j+k}\right)$.
We call this property \emph{sequential homogeneity}.
\end{enumerate}
\end{defn}

\begin{defn}
\label{def:HNI-1-1} A sequence $\bar{s}=\left\langle s_{i}\left|\, i<\delta\right.\right\rangle $
is called \emph{hereditarily nearly Indiscernible} (in short \emph{HNI})
if: 

For every term $\sigma\left(x_{0},\ldots,x_{n-1}\right)$, the sequence
$\bar{t}=\left\langle t_{i}\left|\, i<\delta\right.\right\rangle $
defined by $t_{i}=\sigma\left(s_{i},\ldots,s_{i+n-1}\right)$ is NI.\end{defn}
\begin{rem}
If $\bar{s}$ is HNI then it is NI, and for every term $\sigma\left(x_{0},\ldots,x_{n-1}\right)$,
the sequence $\bar{t}=\left\langle t_{i}\left|\, i<\delta\right.\right\rangle $
defined by $t_{i}=\sigma\left(s_{i},\ldots,s_{i+n-1}\right)$ is HNI.
Indeed, for any term $\tau\left(x_{0},\ldots,x_{k-1}\right)$, let
\[
\tau'\left(x_{0},\ldots,x_{n+k-2}\right)=\tau\left(\sigma\left(x_{0},\ldots,x_{n-1}\right),\ldots,\sigma\left(x_{k-1},\ldots,x_{n+k-2}\right)\right),
\]
then the sequence $\bar{r}=\sequence{r_{i}}{i<\delta}$ defined by
$r_{i}=\tau\left(t_{i},\ldots,t_{i+k-1}\right)$ is equal to $\tau'\left(s_{i},\ldots,s_{i+n+k-2}\right)$
thus it is NI. \end{rem}
\begin{example}
\label{exa:Indiscernible are HNI}If $\bar{s}=\left\langle s_{i}\left|\, i<\delta\right.\right\rangle $
is indiscernible, then it is HNI.\end{example}
\begin{proof}
Suppose $\sigma\left(x_{0},\ldots,x_{n-1}\right)$ is a term. If $t_{i}=\sigma\left(s_{i},\ldots,s_{i+n-1}\right)$,
then any sub-sequence of $\bar{t}=\left\langle t_{i}\left|\, i<\delta\right.\right\rangle $
where the distance between two consecutive elements is at least $n$
is an indiscernible sequence with a constant quantifier free EM-type.
This shows sparseness. 

For sequential homogeneity, note that for a quantifier free formula
$\varphi$, 
\[
\varphi\left(t_{i},\ldots,t_{i+k}\right)=\varphi\left(\sigma\left(s_{i},\ldots,s_{i+n-1}\right),\ldots,\sigma\left(s_{i+k},\ldots,s_{i+n+k-1}\right)\right).
\]

Let $i,j<\delta$. As $\tp_{\qf}\left(s_{i},\ldots,s_{i+n+k-1}\right)=\tp_{\qf}\left(s_{j},\ldots,s_{j+n+k-1}\right)$,
it follows that 
\[
\tp_{\qf}\left(t_{i},\ldots,t_{i+k}\right)=\tp_{\qf}\left(t_{j},\ldots,t_{j+k}\right).
\]
\end{proof}
\begin{defn}
\label{def:ind-1} Assume $M\in\mathcal{T}$.
\begin{enumerate}
\item $\ind\left(M\right)$ is the set of all non-constant indiscernible
sequences $\bar{s}\in\Suc\left(M\right)^{\delta}$.
\item $\HNind\left(M\right)$ is the set of all non-constant HNI sequences
$\bar{s}\in\Suc\left(M\right)^{\delta}$.
\item $\ai\left(M\right)$ is the set of sequences $\bar{s}$ such that
for some $n<\omega$, $\bar{s}\in\left(P_{n}^{M}\right)^{\delta}$
and $s_{i}\wedge s_{i+1}<s_{i+1}\wedge s_{i+2}$ (ai stands for ``almost
increasing'', note that if $\bar{s}$ is increasing then it is here).
\item $\ind_{f}\left(M\right)$ is the set of all sequences $\bar{s}\in\ind\left(M\right)$
such that $s_{i}\wedge s_{j}$ is constant for all $i<j<\delta$ (f
stands for ``fan'').
\item $\ind_{i}\left(M\right)$ is the set of all increasing sequences $\bar{s}\in\ind\left(M\right)$.
\item $\ind_{\ai}\left(M\right)=\ind\left(M\right)\cap\ai\left(M\right)$.
\item Define $\HNind_{f}\left(M\right)$, $\HNind_{i}\left(M\right)$ and
$\HNind_{\ai}\left(M\right)$ similarly, but we demand that the sequences
are HNI.
\end{enumerate}
\end{defn}
From now on, assume $M\in\mathcal{T}$.
\begin{rem}
\label{rem:AIncreasingWedge} If $\bar{s}\in\ai\left(M\right)$, then
$s_{i}\wedge s_{i+n}=s_{i}\wedge s_{i+1}$ for all $2\leq n<\omega$
and $i<\delta$ (prove by induction on $n$, using the fact that if
$a\wedge b<b\wedge b'$ then $a\wedge b'=a\wedge b$). \end{rem}
\begin{prop}
\label{prop:Dichotomy-1-1} $\HNind\left(M\right)=\HNind_{\ai}\left(M\right)\cup\HNind_{f}\left(M\right)$.\end{prop}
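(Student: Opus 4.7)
Let $\bar s \in \HNind(M)$. Applying HNI to the identity term shows $\bar s$ is itself NI, so all $s_i$ share a quantifier-free type and lie in a common $P_n^M$. Set $t_i := s_i \wedge_n s_{i+1}$; HNI applied to $\sigma(x,y) = x \wedge_n y$ makes $\bar t$ NI. Since $t_i, t_{i+1} \le_n s_{i+1}$ both lie on the well-ordered path to $s_{i+1}$, they are $<_n$-comparable, and the sequential homogeneity clause of NI forces the order relation between consecutive $t_i$'s to be constant in $i$. A strictly decreasing relation is ruled out by well-ordering of $P_n^M$, so either $t_i <_n t_{i+1}$ for every $i$, which places $\bar s$ in $\HNind_{\ai}(M)$ and finishes the proof; or $t_i = t_{i+1}$ for every $i$, and $t_i = c$ is a fixed element of $P_n^M$ (and since $\bar s$ is a non-constant NI sequence, $s_i >_n c$ strictly for every $i$).

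In the constant case the plan is to prove by induction on $k \ge 1$ that $s_i \wedge_n s_{i+k} = c$ for every $i$, which is exactly $\bar s \in \HNind_f(M)$. The base case $k=1$ is the hypothesis. For $k \ge 2$, assuming the statement for all smaller positive values, the standard-tree trichotomy applied to $s_0, s_{k-1}, s_k$ (among three pairwise meets at least two coincide and the third is $\ge$ them) yields $s_0 \wedge_n s_k \ge c$. Suppose toward a contradiction that $s_0 \wedge_n s_k > c$: by NI sequential homogeneity on $(k+1)$-tuples this propagates to every shift, so $\suc_n(c, s_i) = \suc_n(c, s_{i+k})$ for every $i$, while the inductive hypothesis forces $\suc_n(c, s_i) \ne \suc_n(c, s_{i+j})$ for $1 \le j < k$. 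Hence the map $i \mapsto \suc_n(c, s_i)$ is periodic of exact period $k$, taking $k$ distinct values per period.

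The main obstacle is to turn this periodic pattern into a contradiction, and the key is NI's sub-sequence clause. Let $n_0$ witness that clause for $\bar s$ and pick $m' \ge n_0$ with $\gcd(m', k) = 1$, available since $k \ge 2$. Then $\bar s'' := \langle s_{im'} \rangle_i$ is indiscernible, so the quantifier-free formula $\varphi(x_0, x_1, x_2) := (x_0 \wedge_n x_1 = x_1 \wedge_n x_2)$ must take the same truth value on $(s''_0, s''_1, s''_2)$ and on $(s''_0, s''_k, s''_{k+1})$. The residues $0, m', 2m' \pmod{k}$ are pairwise distinct, so on the first triple both consecutive meets equal $c$ and $\varphi$ holds. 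On the second, $km' \equiv 0 \pmod{k}$ forces $\suc_n(c, s''_0) = \suc_n(c, s''_k)$ and hence $s''_0 \wedge_n s''_k > c$, while $(k+1)m' \equiv m' \not\equiv 0 \pmod{k}$ forces $s''_k \wedge_n s''_{k+1} = c$, so $\varphi$ fails. This contradicts the indiscernibility of $\bar s''$, completing the induction and hence the proof.
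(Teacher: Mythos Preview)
Your proof is correct and takes a genuinely different path from the paper's. The paper applies Ramsey's theorem to the index set $\omega$ to extract an infinite $A$ on which the meets $s_i \wedge s_j \lessgtr s_j \wedge s_k$ (for $i<j<k$ in $A$) behave uniformly; it then uses the sub-sequence property to push this behaviour to all triples with gaps $\ge n$, and finally uses sequential homogeneity to close the remaining gaps $< n$ in each of the two cases. You bypass Ramsey entirely: applying HNI to the term $x_0 \wedge x_1$ makes the meet sequence $\bar t$ itself NI, so the order relation between consecutive $t_i$'s is constant outright. Your constant case then becomes a clean induction on the gap $k$, and the contradiction for $k \ge 2$ comes from the periodicity of $i \mapsto \suc(c,s_i)$ together with a single well-chosen indiscernible sub-sequence $\langle s_{im'}\rangle_i$ with $\gcd(m',k)=1$. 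This is more elementary than the paper's route (no Ramsey, no separate treatment of small gaps), at the price of a slightly more delicate endgame.

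One small imprecision that does not affect correctness: when $k=2$ the residues $0,\,m',\,2m' \pmod k$ are \emph{not} pairwise distinct (since $2m' \equiv 0$), but your argument only uses that the \emph{consecutive} differences $m'-0$ and $2m'-m'$ are nonzero modulo $k$, which holds because $\gcd(m',k)=1$ and $k\ge 2$. So both meets in the first triple are still $c$ and $\varphi$ holds there, exactly as you claim.
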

\begin{proof}
Assume that $\bar{s}\in\HNind\left(M\right)$. Since $\bar{s}$ is
NI, there is some $n<\omega$ that witnesses sparseness. As for $i<j<k$,
$s_{i}\wedge s_{j}$ is comparable with $s_{j}\wedge s_{k}$, by Ramsey
there is an infinite subset $A\subseteq\omega$ that satisfies one
of the following possibilities:
\begin{enumerate}
\item For all $i<j<k\in A$, $s_{i}\wedge s_{j}=s_{j}\wedge s_{k}$, or
\item For all $i<j<k\in A$, $s_{i}\wedge s_{j}<s_{j}\wedge s_{k}$.
\end{enumerate}
(note that it cannot be that $s_{j}\wedge s_{k}<s_{i}\wedge s_{j}$
because the trees are well ordered).

Assume (1) is true.

It follows that if $i<j<k<l\in A$ then $s_{i}\wedge s_{j}=s_{j}\wedge s_{k}=s_{k}\wedge s_{l}$.
If $n\leq j-i,k-j,l-k$, then by the choice of $n$, the same is true
for all $i<j<k<l<\delta$ where the distances are at least $n$. Moreover,
given $i<j,k<l$ such that $n\leq j-i$ and $n\leq l-k$, then $s_{i}\wedge s_{j}=s_{\max\left\{ j,l\right\} +n}\wedge s_{\max\left\{ j,l\right\} +2n}$,
and the same is true for $s_{k}\wedge s_{l}$. It follows that $s_{i}\wedge s_{j}=s_{k}\wedge s_{l}$.

Choose some $0<i<n$. 

Assume for contradiction that $s_{0}\wedge s_{i}<s_{i}\wedge s_{2i}$,
then by sequential homogeneity $\left\langle s_{i\alpha}\left|\,\alpha<\delta\right.\right\rangle \in\ai\left(M\right)$.
In this case, by Remark \ref{rem:AIncreasingWedge}, $s_{0}\wedge s_{i}<s_{i}\wedge s_{2i}=s_{i}\wedge s_{ni+i}$.
But $s_{0}\wedge s_{ni+i}=s_{i}\wedge s_{ni+i}$, and so on the one
hand $s_{0}\wedge s_{i}<s_{0}\wedge s_{ni+i}$, and on the other hand
$s_{0}\wedge s_{ni+i}\leq s_{i}$ --- together it's a contradiction.

It cannot be that $s_{0}\wedge s_{i}>s_{i}\wedge s_{2i}$ since the
trees are well ordered.

So (again by the sequential homogeneity) it must be that $s_{0}\wedge s_{i}=s_{i}\wedge s_{2i}=\cdots=s_{ni}\wedge s_{ni+i}$.
So necessarily $s_{0}\wedge s_{i}\leq s_{0}\wedge s_{ni}$, but in
addition $s_{0}\wedge s_{ni}=s_{0}\wedge s_{ni+i}$ (since the distance
is at least $n$) and so ${s_{0}\wedge s_{i}=s_{ni}\wedge s_{ni+i}\geq s_{0}\wedge s_{ni}}$,
and hence $s_{0}\wedge s_{i}=s_{0}\wedge s_{ni}=s_{0}\wedge s_{n}$.

It follows that $s_{i_{0}}\wedge s_{i_{0}+i}=s_{i_{0}}\wedge s_{i_{0}+n}=s_{0}\wedge s_{n}$
for every $i_{0}<\delta$. This is true for all $i$ such that $i_{0}+i<\delta$
and so $s_{i}\wedge s_{j}=s_{0}\wedge s_{n}$ for all $i<j<\delta$.
So in this case $\bar{s}\in\HNind_{f}\left(M\right)$.

Assume (2) is true. Assume that $i<j<k\in A$ and the distances are
at least $n$. Then, as $s_{i}\wedge s_{j}<s_{j}\wedge s_{k}$, it
follows from sparseness that $\left\langle s_{n\alpha}\left|\,\alpha<\delta\right.\right\rangle \in\ai\left(M\right)$
and that $\left\langle s_{0},s_{n+1},s_{3n},s_{4n},\ldots\right\rangle \in\ai\left(M\right)$.
In particular, by Remark \ref{rem:AIncreasingWedge}, $s_{0}\wedge s_{n}=s_{0}\wedge s_{3n}=s_{0}\wedge s_{n+1}$.

If $s_{0}\wedge s_{1}<s_{1}\wedge s_{2}$, then $\bar{s}\in\HNind_{\ai}\left(M\right)$
by sequential homogeneity and we are done, so assume this is not the
case.

It cannot be that $s_{0}\wedge s_{1}>s_{1}\wedge s_{2}$ (because
the trees are well ordered).

Assume for contradiction that $s_{0}\wedge s_{1}=s_{1}\wedge s_{2}$.
By sequential homogeneity it follows that $s_{0}\wedge s_{1}=s_{n}\wedge s_{n+1}$.
We also know that $s_{0}\wedge s_{n}=s_{0}\wedge s_{n+1}$, and together
we have $s_{0}\wedge s_{1}=s_{0}\wedge s_{n+1}$, and again by sequential
homogeneity, $s_{n}\wedge s_{2n+1}=s_{n}\wedge s_{n+1}$, and so $s_{n}\wedge s_{2n+1}=s_{0}\wedge s_{n}$
--- a contradiction (because the distances are at least $n$).\end{proof}
\begin{defn}
\label{def:TheHFunction-1-1} Define the function $H:\HNind_{\ai}\left(M\right)\to\HNind\left(M\right)$
as follows: given $\bar{s}\in\HNind_{\ai}\left(M\right)$, let $H\left(\bar{s}\right)=\bar{t}$
where $t_{i}=G\left(\suc\left(\minb\left(s_{i}\wedge s_{i+1}\right),s_{i+1}\right)\right)$.
(Recall that $G=G_{n}$ where the sequence $\bar{s}$ is contained
in $P_{n}^{M}$.)\end{defn}
\begin{rem}
$H$ is well defined: if $\bar{s}\in\HNind_{\ai}\left(M\right)$ then
$H\left(\bar{s}\right)$ is in $\HNind\left(M\right)$. This is because
$\bar{t}=H\left(\bar{s}\right)$ is not constant --- by Clause (\ref{enu:GInjective})
of Definition \ref{def:ClassT} (it is applicable: the sequence $\left\langle s_{i}\wedge s_{i+1}\left|\, i<\delta\right.\right\rangle $
is NI and increasing, so there is some $n<\omega$ such that $s_{i}\wedge s_{i+1}\equiv n\modp{\omega}$
for all $i<\delta$, and hence $\left\langle \limb\left(s_{i}\wedge s_{i+1}\right)\left|\, i<\delta\right.\right\rangle $
is increasing).
\end{rem}
As usual, we denote $H^{\left(0\right)}\left(\bar{s}\right)=\bar{s}$
and $H^{\left(n\right)}\left(\bar{s}\right)=H\left(H^{\left(n-1\right)}\left(\bar{s}\right)\right)$
for $n>0$. 
\begin{cor}
\label{cor:TheHFunction-1-1} Let $\bar{s}\in\HNind_{\ai}\left(M\right)$.
If for no $n<\omega$, $H^{\left(n\right)}\left(\bar{s}\right)\in\HNind_{f}\left(M\right)$,
then for all $n<\omega$, $H^{\left(n\right)}\left(\bar{s}\right)\in\HNind_{\ai}\left(M\right)$.
Moreover, in this case there exists some $K<\omega$ such that for
all $n\geq K$, if $\bar{t}=H^{\left(n\right)}\left(\bar{s}\right)$
then $\suc\left(\minb\left(t_{i}\wedge t_{i+1}\right),t_{i}\right)=t_{i}$.\end{cor}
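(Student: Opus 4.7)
The plan is to treat the two assertions separately. For the first, I would proceed by induction on $n$: the remark preceding the corollary shows $H$ sends $\HNind_{\ai}(M)$ into $\HNind(M)$, so if $\bar{s}^{(n)} := H^{(n)}(\bar{s}) \in \HNind_{\ai}(M)$ then $\bar{s}^{(n+1)} \in \HNind(M) = \HNind_{\ai}(M) \cup \HNind_{f}(M)$ by Proposition~\ref{prop:Dichotomy-1-1}, and the standing hypothesis rules out the $\HNind_{f}$ summand. The base case is the given $\bar{s} \in \HNind_{\ai}(M)$.

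For the ``moreover'' clause, my plan is a well-foundedness argument on the level of one fixed coordinate. Fix $i = 0$, set $\alpha_n := \lev(s^{(n)}_0)$, and let $u^{(n)} := \suc(\limb(s^{(n)}_0 \wedge s^{(n)}_1), s^{(n)}_1)$, so that $s^{(n+1)}_0 = G(u^{(n)})$ by Definition~\ref{def:TheHFunction-1-1}. The discrete-tree axiom gives $\lev(u^{(n)}) = \lev(\limb(s^{(n)}_0 \wedge s^{(n)}_1)) + 1$, and clause~(\ref{enu:GDecreasing}) of Definition~\ref{def:ClassT} then yields
\[
\alpha_{n+1} \le \lev(\limb(s^{(n)}_0 \wedge s^{(n)}_1)) + 1.
\]
Because $\limb(s^{(n)}_0 \wedge s^{(n)}_1) \le s^{(n)}_0 \wedge s^{(n)}_1 \le s^{(n)}_0$ sits at a limit level whereas $s^{(n)}_0 \in \Suc(P_{k}^{M})$ sits at a successor level, the outer inequality $\lev(\limb(s^{(n)}_0 \wedge s^{(n)}_1)) < \alpha_n$ is strict, and hence $\lev(\limb(s^{(n)}_0 \wedge s^{(n)}_1)) + 1 \le \alpha_n$, giving $\alpha_{n+1} \le \alpha_n$. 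Well-foundedness of the ordinals then supplies a $K < \omega$ with $\alpha_n$ constant for every $n \ge K$.

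For any such $n$ the chain collapses to equality, forcing $\lev(\limb(s^{(n)}_0 \wedge s^{(n)}_1)) = \alpha_n - 1$; thus $\alpha_n = \beta + 1$ for a limit ordinal $\beta$ and $\limb(s^{(n)}_0 \wedge s^{(n)}_1)$ lives at level $\beta$, strictly below $s^{(n)}_0$. Normality of the tree then identifies the unique element of level $\beta + 1$ lying above $\limb(s^{(n)}_0 \wedge s^{(n)}_1)$ on the branch of $s^{(n)}_0$ as $s^{(n)}_0$ itself, giving $\suc(\limb(s^{(n)}_0 \wedge s^{(n)}_1), s^{(n)}_0) = s^{(n)}_0$. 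Since this identity is quantifier free in $(s^{(n)}_0, s^{(n)}_1)$, the sequential homogeneity clause of the NI sequence $\bar{s}^{(n)}$ spreads it to every consecutive pair $(s^{(n)}_i, s^{(n)}_{i+1})$, which is exactly the desired conclusion. The main obstacle I foresee is bookkeeping which inequalities in the displayed chain are forced to become equalities once $\alpha_n$ stabilizes; everything else is a routine unwinding of the definition of $H$ and of clause~(\ref{enu:GDecreasing}).
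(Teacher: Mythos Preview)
Your proof is correct and follows essentially the same route as the paper: induction via Proposition~\ref{prop:Dichotomy-1-1} for the first claim, and for the second a decreasing-ordinal argument on $\lev(s^{(n)}_0)$ using clause~(\ref{enu:GDecreasing}), then collapsing the chain of inequalities once the levels stabilize and propagating via sequential homogeneity. One minor point: the appeal to ``normality'' is unnecessary---once $\limb(s^{(n)}_0\wedge s^{(n)}_1)$ sits at level $\alpha_n-1$ and $s^{(n)}_0$ at level $\alpha_n$, the identity $\suc(\limb(s^{(n)}_0\wedge s^{(n)}_1),s^{(n)}_0)=s^{(n)}_0$ is immediate from the definition of $\suc$ in any well-ordered tree.
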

\begin{proof}
By Proposition \ref{prop:Dichotomy-1-1}, it follows by induction
on $n<\omega$ that $H^{\left(n\right)}\left(\bar{s}\right)\in\HNind_{\ai}\left(M\right)$
and so $H^{\left(n+1\right)}\left(\bar{s}\right)$ is well defined.

For $n<\omega$, let $\bar{s}_{n}=H^{\left(n\right)}\left(\bar{s}\right)$,
and let us enumerate this sequence as $\bar{s}_{n}=\left\langle s_{n,i}\left|\, i<\delta\right.\right\rangle $.

$\lev\left(\minb\left(s_{n,0}\wedge s_{n,1}\right)\right)<\lev\left(s_{n,0}\right)$
because $\lev\left(s_{n,0}\right)$ is a successor ordinal (by Clause
(\ref{enu:imageSuc}) of Definition \ref{def:ClassT}) while $\lev\left(\minb\left(x\right)\right)$
is a limit ordinal for all $x\in M$. 

So $\lev\left(\suc\left(\minb\left(s_{n,0}\wedge s_{n,1}\right)\right),s_{n,1}\right)\leq\lev\left(s_{n,0}\right)$,
and so by Clause (\ref{enu:GDecreasing}) of Definition \ref{def:ClassT},
\[
\left\langle \lev\left(s_{n,0}\right)\left|\, n<\omega\right.\right\rangle 
\]
 is a $\leq$-decreasing sequence.

Hence there is some $K<\omega$ and some $\alpha$ such that $\lev\left(s_{n,0}\right)=\alpha$
for all $K\leq n$. Assume without loss of generality that $K=0$.

Let $n<\omega$. We know that
\begin{eqnarray*}
\lev\left(s_{n+1,0}\right)\leq\lev\left(\suc\left(\minb\left(s_{n,0}\wedge s_{n,1}\right),s_{n,1}\right)\right) & =\\
 & = & \lev\left(\suc\left(\minb\left(s_{n,0}\wedge s_{n,1}\right),s_{n,0}\right)\right)\\
 & \leq & \lev\left(s_{n,0}\right)
\end{eqnarray*}
But the left hand side and the right hand side are equal and $\suc\left(\minb\left(s_{n,0}\wedge s_{n,1}\right),s_{n,0}\right)\leq s_{n,0}$,
so 
\[
\suc\left(\minb\left(s_{n,0}\wedge s_{n,1}\right),s_{n,0}\right)=s_{n,0}.
\]

By sequential homogeneity, $\suc\left(\minb\left(s_{n,i}\wedge s_{n,i+1}\right),s_{n,i}\right)=s_{n,i}$
for all $i<\delta$ as desired. 
\end{proof}

\subsection*{Constructing a model in $\mathcal{T}$}

By Assumption \ref{ass:NoColoring}, we have a function $\cc:\left[\kappa\right]^{<\omega}\to\theta$
that witnesses the fact that $\kappa\not\to\left(\delta\right)_{\theta}^{<\omega}$
(the letter $\cc$ stands for ``coloring''). Fix $\cc$, and also
a pairing function (a bijection) $\pr:\theta\times\theta\to\theta$
and projections $\pi_{1},\pi_{2}:\theta\to\theta$ (defined so that
$\pi_{1}\left(\pr\left(\alpha,\beta\right)\right)=\alpha$ and $\pi_{2}\left(\pr\left(\alpha,\beta\right)\right)=\beta$).
For us, $0$ is considered to be a limit ordinal. For an ordinal $\alpha$,
let $\Lim\left(\alpha\right)=\left\{ \beta<\alpha\left|\,\beta\mbox{ is a limit}\right.\right\} $.
\begin{defn}
\label{def:P-1-1-1} $\P=\P_{\theta,\kappa}$ is the set of triples
$\p=\left(d,M,E\right)=\left(d_{\p},M_{\p},E_{\p}\right)$ such that:
\begin{enumerate}
\item $M$ is a standard model of $T_{\left\{ \emptyset\right\} }^{\forall}$
and $M=P_{\emptyset}^{M}$ (i.e., $M$ is just a standard tree). Some
notation:

\begin{enumerate}
\item We write $<_{\p}$ instead of $<_{\emptyset}^{M_{\p}}$ etc., or omit
$\p$ when it is clear from the context.
\item Let $\Suc_{\lim}\left(M\right)$ be the set of all $t\in\Suc\left(M\right)$
such that $\lev\left(t\right)-1$ is a limit.
\end{enumerate}
\item $E$ is an equivalence relation refining $E^{\nb}$ (see Notation
\ref{not:neighbors}). Moreover, for levels that are not $\alpha+1$
for limit $\alpha$ it equals $E^{\nb}$. By normality $E$ is equality
on limit elements, so it is interesting only on $\Suc_{\lim}\left(M\right)$.
\item For every $E^{\nb}$ equivalence class $C$, $\left|C/E\right|<\kappa$.
\item $d$ is a function from $\left\{ \eta\in\Suc_{\lim}\left(M\right)^{<\omega}\left|\,\eta\left(0\right)<\cdots<\eta\left(\lg\left(\eta\right)-1\right)\right.\right\} $
to $\theta$.
\item We say that $\p$ is \emph{hard} if there is no increasing sequence
of elements $\bar{s}$ of length $\delta$ from $\Suc_{\lim}\left(M\right)$
such that:

For all $n<\omega$ there is $c_{n}<\theta$ such that for every $i_{0}<\cdots<i_{n-1}<\delta$,
$d\left(s_{i_{0}},\ldots,s_{i_{n-1}}\right)=c_{n}$.

\end{enumerate}
\end{defn}
\begin{example}
\label{exa:P_c-1-1-1} Consider $\left(\kappa,<\right)$ as a standard
tree. Let $\p_{\cc}=\left(\cc\upharpoonright\Suc_{\lim}\left(\kappa\right),\kappa,=\right)\in\P$.
Then $\p_{\cc}$ is hard.\end{example}
\begin{defn}
Let $\p=\left(d_{\p},M_{\p},E_{\p}\right)\in\P$, let $x$ be a variable
and $A\subseteq\Suc_{\lim}\left(M_{\p}\right)$ be a linearly ordered
set.
\begin{enumerate}
\item Say that $p$ is a \emph{$d$-type }over $A$ if $p$ is a consistent
set of equations of the form

$d\left(a_{0},\ldots,a_{n-1},x\right)=\varepsilon$ where $n<\omega$,
$\varepsilon<\theta$ and $a_{0}<\cdots<a_{n-1}\in A$.

\item Consistency here means that $p$ does not contain a subset of the
form 
\[
\left\{ d\left(a_{0},\ldots,a_{n-1},x\right)=\varepsilon,d\left(a_{0},\ldots,a_{n-1},x\right)=\varepsilon'\right\} 
\]
 for $\varepsilon\neq\varepsilon'$.
\item Say that $p$ is complete if for every increasing sequence $\left\langle a_{0},\ldots,a_{n-1}\right\rangle $
from $A$ there is such an equation in $p$.
\item If $B\subseteq A$ then for a $d$-type $p$ over $A$, let 
\[
p\upharpoonright B=\left\{ d\left(a_{0},\ldots,a_{n-1},x\right)=\varepsilon\in p\left|\, a_{0},\ldots,a_{n-1}\in B\right.\right\} .
\]
 
\item For $t\in\Suc_{\lim}\left(M_{\p}\right)$,
\begin{eqnarray*}
\dtp\left(t/A\right) & = & \left\{ d\left(a_{0},\ldots,a_{n-1},x\right)=\varepsilon\right|\\
 &  & \left.a_{0}<\cdots<a_{n-1}\in A,a_{n-1}<t,d_{\p}\left(a_{0},\ldots,a_{n-1},t\right)=\varepsilon\right\} .
\end{eqnarray*}
For an element $t\in\Suc_{\lim}\left(M\right)$, $t\models p$ means
that $t$ satisfies all the equations in $p$ when we replace $d$
by $d_{p}$.
\item Let $S_{d}\left(A\right)$ be the set of all complete $d$-types over
$A$.
\end{enumerate}
\end{defn}
Now we define the function $\q$ from $\P$ to $\P$.
\begin{defn}
\label{def:q-1-1}For $\p=\left(M_{\p},d_{\p},E_{\p}\right)\in\P$,
define $\q=\q\left(\p\right)=\left(M_{\q},d_{\q},E_{\q}\right)\in\P$
by:
\begin{itemize}
\item $M_{\q}$ is the set of pairs $a=\left(\Gamma,\eta\right)=\left(\Gamma_{a},\eta_{a}\right)$
such that:

\begin{enumerate}
\item There is $\alpha<\kappa$ such that $\eta:\alpha\to\Suc_{\lim}\left(M_{\p}\right)$
and $\Gamma:\Lim\left(\alpha\right)\to S_{d}\left(M_{\p}\right)$.
Denote $\lg\left(\Gamma,\eta\right)=\lg\left(\eta\right)=\alpha$.
If $\alpha$ is a successor ordinal, let $l_{\left(\Gamma,\eta\right)}=\eta\left(\alpha-1\right)\in M_{\p}$.
\item For $\beta<\alpha$ limit, $\Gamma\left(\beta\right)\in S_{d}\left(\left\{ \eta\left(\beta'\right)\left|\,\beta'\leq\beta\right.\right\} \right)$.
\item \label{enu:ZeroSatisfies} If $0<\alpha$ then $\eta\left(0\right)\models\Gamma\left(0\right)\upharpoonright\emptyset$.
\item For $\beta'<\beta<\alpha$, $\eta\left(\beta'\right)<_{\p}\eta\left(\beta\right)$
($\eta$ is increasing in $M_{\p}$).
\item If $\beta'<\beta<\alpha$ are limit ordinals then $\Gamma\left(\beta'\right)\subseteq\Gamma\left(\beta\right)$.
\item \label{enu:satisfies} If $\beta'<\beta<\alpha$ and $\beta'$ is
a limit ordinal then $\eta\left(\beta\right)\models\Gamma\left(\beta'\right)$.
\item \label{enu:tightness-1-1} For $\beta<\alpha$, there is no $t<_{\p}\eta\left(\beta\right)$
that satisfies

\begin{enumerate}
\item $t\in\Suc_{\lim}\left(M_{\p}\right)$,
\item $\eta\left(\beta'\right)<_{\p}t$ for all $\beta'<\beta$,
\item $t\models\Gamma\left(0\right)\upharpoonright\emptyset$, and
\item $t\models\Gamma\left(\beta'\right)$ for all limit $\beta'<\beta$.
\end{enumerate}
\item The order on $M_{\q}$ is $\left(\Gamma,\eta\right)<_{\q}\left(\Gamma',\eta'\right)$
iff $\Gamma\triangleleft\Gamma'$ and $\eta\triangleleft\eta'$ (where
$\triangleleft$ means first segment). This defines a standard tree
structure on $M_{\q}$.

It follows that for $a=\left(\Gamma,\eta\right)$, $\lev\left(a\right)=\lg\left(a\right)$.

\end{enumerate}
\item $d_{\q}$ is defined as follows: suppose $a_{0}<_{\q}\cdots<_{\q}a_{n-1}\in\Suc_{\lim}\left(M_{\q}\right)$
and $a_{i}=\left(\Gamma_{i},\eta_{i}\right)$.

Let $t_{i}=l_{a_{i}}=\eta_{i}\left(\lg\left(a_{i}\right)-1\right)$
and $p=\Gamma_{n-1}\left(\lg\left(a_{n-1}\right)-1\right)$. Let $\varepsilon\in\theta$
be the unique color such that $d\left(t_{0},\ldots,t_{n-1},x\right)=\varepsilon\in p$.
Then 
\[
d_{\q}\left(a_{0},\ldots,a_{n-1}\right)=\pr\left(\varepsilon,\cc\left(\lev\left(a_{0}\right),\ldots,\lev\left(a_{n-1}\right)\right)\right).
\]

\item $E_{\q}$ is defined as follows: $\left(\Gamma_{1},\eta_{1}\right)\mathrela{E_{\q}}\left(\Gamma_{2},\eta_{2}\right)$
iff 

\begin{itemize}
\item $\lg\left(\eta_{1}\right)=\lg\left(\eta_{2}\right)$, so equals to
some $\alpha<\kappa$,
\item $\eta_{1}\upharpoonright\beta=\eta_{2}\upharpoonright\beta,\Gamma_{1}\upharpoonright\beta=\Gamma_{2}\upharpoonright\beta$
for all $\beta<\alpha$ (so they are $E^{\nb}$-equivalent),
\item $\Gamma_{1}\left(0\right)\upharpoonright\emptyset=\Gamma_{2}\left(0\right)\upharpoonright\emptyset$,
and
\item If $\alpha=\beta+n$ for $\beta\in\Lim\left(\alpha\right)$ and $n<\omega$
then for all $\alpha_{0}<\alpha_{1}<\cdots<\alpha_{k-1}<\beta$, 
\begin{eqnarray*}
d\left(\eta_{1}\left(\alpha_{0}\right),\ldots,\eta_{1}\left(\alpha_{k-1}\right),\eta_{1}\left(\beta\right),x\right)=\varepsilon\in\Gamma_{1}\left(\beta\right) & \Leftrightarrow\\
d\left(\eta_{2}\left(\alpha_{0}\right),\ldots,\eta_{2}\left(\alpha_{k-1}\right),\eta_{2}\left(\beta\right),x\right)=\varepsilon\in\Gamma_{2}\left(\beta\right)
\end{eqnarray*}
Note that it follows that if $1<n$, and $\left(\Gamma_{1},\eta_{1}\right)\mathrela{E^{\nb}}\left(\Gamma_{2},\eta_{2}\right)$,
then $\Gamma_{1}\left(\beta\right)=\Gamma_{2}\left(\beta\right)$
and $\eta_{1}\left(\beta\right)=\eta_{2}\left(\beta\right)$, so they
are $E$-equivalent.
\end{itemize}
\end{itemize}
\end{defn}
In the next claims we assume that $\p\in\P$ and $\q=\q\left(\p\right)$.
\begin{rem}
\label{rem:LevelVsLength-1-1} $\lev\left(a\right)=\lg\left(a\right)$
for $a\in M_{\q}$ and $a\mathrela{E^{\nb}}b$ iff $\lev\left(a\right)=\lev\left(b\right)$
and $a\upharpoonright\alpha=b\upharpoonright\alpha$ for all $\alpha<\lev\left(a\right)$.\end{rem}
\begin{claim}
$\q\in\P_{\theta,\kappa}$ and moreover it is hard.\end{claim}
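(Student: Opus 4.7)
I will verify that $\q$ satisfies clauses (1)--(4) of Definition \ref{def:P-1-1-1} and then that it is hard (clause (5)). The structural verifications are essentially bookkeeping with one subtle point, the cardinality bound in clause (3); hardness will follow directly from the use of the coloring $c$ in the definition of $d_\q$.

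For the tree $M_\q$: it is ordered by first-segment inclusion $\triangleleft$, so each predecessor set is well-ordered of order type $\lg(a) = \lev(a)$, with the empty pair as root. Normality at a limit level $\alpha$ follows because the predecessor of $a = (\Gamma, \eta)$ at level $\beta + 1 < \alpha$ (available since $\alpha$ is a limit) is $(\Gamma \upharpoonright \Lim(\beta+1), \eta \upharpoonright (\beta+1))$, which reveals $\eta(\beta)$ and, when $\beta$ is a limit, $\Gamma(\beta)$; so two elements at level $\alpha$ with identical strict predecessors must agree as pairs. This makes $M_\q$ a standard tree carrying the canonical $L_{\{\emptyset\}}^\theta$-structure of Example \ref{exa:WOTree}. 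The function $d_\q$ is well-defined: the values $t_i = l_{a_i}$ are strictly increasing in $M_\p$ (since $\eta_{n-1}$ is increasing and the relevant indices strictly increase), and $\Gamma_{n-1}(\lg(a_{n-1}) - 1) \in R(M_\p)$ is a complete $d_\p$-type delivering the unique $\varepsilon$, so $d_\q(\bar{a}) = \pr(\varepsilon, c(\cdot)) \in \theta$. The relation $E_\q$ is plainly an equivalence refining $E^{\nb}_\q$, and as the definition itself notes, its last clause is vacuous at limit levels and automatic at levels $\beta + n$ with $n \geq 2$, so $E_\q$ can differ from $E^{\nb}_\q$ only on levels $\beta + 1$ with $\beta$ a limit.

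The cardinality bound $|C/E_\q| < \kappa$ is the first genuinely nontrivial step and, I expect, the main obstacle of the argument. Fix an $E^{\nb}_\q$-class $C$ at level $\beta + 1$ and a representative $(\Gamma_0, \eta_0) \in C$, so the common initial segment is determined, and an element of $C$ is parametrised by the extra data $(\eta(\beta), \Gamma(\beta)) \in \Suc_{\lim}(M_\p) \times R(M_\p)$. Reading off the last clause of $E_\q$ (after noticing that the matching $\eta_1 \upharpoonright \beta = \eta_2 \upharpoonright \beta = \eta_0$ forces the $\alpha_j$-entries to coincide), the $E_\q$-class of such an element depends only on the function $F \colon [\beta]^{<\omega} \to \theta$ defined by letting $F(\alpha_0, \ldots, \alpha_{k-1})$ be the unique $\varepsilon$ with $d_\p(\eta_0(\alpha_0), \ldots, \eta_0(\alpha_{k-1}), \eta(\beta), x) = \varepsilon \in \Gamma(\beta)$. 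The number of such $F$ is at most $\theta^{|\beta|^{<\omega}} < \kappa$, using strong inaccessibility of $\kappa$ together with $\theta, |\beta| < \kappa$.

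For hardness, suppose toward a contradiction that an increasing $\bar{s} = \langle s_i : i < \delta \rangle$ in $\Suc_{\lim}(M_\q)$ and colors $\langle c_n : n < \omega \rangle$ witness its failure. Since $<_\q$ is first-segment inclusion, $\langle \lev(s_i) : i < \delta \rangle$ is a strictly increasing $\delta$-sequence of ordinals in $\kappa$, and by the definition of $d_\q$ each constancy $d_\q(s_{i_0}, \ldots, s_{i_{n-1}}) = c_n$ unpacks to $c(\lev(s_{i_0}), \ldots, \lev(s_{i_{n-1}})) = \pi_2(c_n)$, yielding a $c$-homogeneous $\delta$-subsequence of $\kappa$ and contradicting Assumption \ref{ass:NoColoring}. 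Thus once the cardinality count is in hand, hardness (and the remainder of clauses (1)--(4)) will be a routine unpacking of definitions.
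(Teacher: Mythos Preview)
Your proof is correct and follows the same approach as the paper: for the cardinality bound the paper encodes the $E_\q$-invariant as the map $a\mapsto\Gamma_a(\beta)^{l_a}$ (the type $\Gamma_a(\beta)$ with the last coordinate $l_a$ replaced by a placeholder $*$), which carries exactly the same information as your function $F$, and bounds the range by $2^{|\beta|+\theta+\aleph_0}<\kappa$; hardness is obtained identically, by reading off $c(\lev(s_{i_0}),\ldots,\lev(s_{i_{n-1}}))$ from the second $\pr$-coordinate of $d_\q$. One small omission: the definition of $E_\q$ carries a separate clause $\Gamma_1(0)\upharpoonright\emptyset=\Gamma_2(0)\upharpoonright\emptyset$ which your $F$ does not record; this is automatic for $\beta>0$ (since $E^{\nb}$-equivalence already forces $\Gamma_1(0)=\Gamma_2(0)$) but not for $\beta=0$, though there it contributes only an extra factor of $\theta$, so your bound survives unchanged.
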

\begin{proof}
The fact that $M_{\q}$ is a standard tree is trivial. Also, $E$
refines $E^{\nb}$ by definition.

We must show that the number of $E$-classes inside a given $E^{\nb}$-class
is bounded.

Given a (partial) $d$-type $p$ over $M_{\p}$ and $t\in M_{\p}$,
let $p^{t}$ be the set of equations we get by replacing all appearances
of $t$ by a special letter $*$.

Assume that $A$ is an $E^{\nb}$-class contained in $\Suc_{\lim}\left(M_{\q}\right)$,
and that for every $a\in A$, $\lev\left(a\right)=\alpha+1$ where
$\alpha$ is limit. Assume $a\in A$ and let $B=\left\{ *\right\} \cup\image\left(\eta_{a}\right)\backslash\left\{ l_{a}\right\} $
(since $A$ is an $E^{\nb}$-class, this set does not depend on the
choice of $a$). Consider the map $\varepsilon$ defined by $a\mapsto\Gamma_{a}\left(\alpha\right)^{l_{a}}$.
Then, $a,b\in A$ are $E$ equivalent iff\emph{ }$\varepsilon\left(a\right)=\varepsilon\left(b\right)$\emph{.}
Therefore this map induces an injective map from $A/E$ to this set
of types. The size of this set is at most $2^{\left|B\right|+\theta+\aleph_{0}}$.
But $\left|B\right|=\left|\alpha\right|<\kappa$, and $\theta<\kappa$
by assumption, so $\left|A/E\right|<\kappa$ (as $\kappa$ is a strong
limit).

$\q$ is hard: if $\bar{s}=\left\langle s_{i}\left|\, i<\delta\right.\right\rangle $
is a counterexample then $\left\langle \lev\left(s_{i}\right)\left|\, i<\delta\right.\right\rangle $
would be a homogeneous sub-sequence, contradicting the choice of $\cc$.\end{proof}
\begin{prop}
\label{prop:LevelDecreases-1}$ $
\begin{enumerate}
\item \label{enu:levDec} For all $a\in\Suc\left(M_{\q}\right)$, $\lev_{M_{\q}}\left(a\right)\leq\lev_{M_{\p}}\left(l_{a}\right)$.
\item Assume $t\in\Suc_{\lim}\left(M_{\p}\right)$. Then there is some $a=\left(\Gamma,\eta\right)\in\Suc\left(M_{\q}\right)$
such that $l_{a}=t$.
\end{enumerate}
\end{prop}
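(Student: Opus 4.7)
The plan for (\ref{enu:levDec}) is a direct transfinite induction. Write $a = (\Gamma, \eta)$ with $\lg(a) = \alpha$; the assumption $a \in \Suc(M_\q)$ forces $\alpha$ to be a successor, $\alpha = \beta + 1$, so $l_a = \eta(\beta)$ and $\lev_{M_\q}(a) = \alpha$. I would prove by induction on $\gamma \le \beta$ that $\lev_{M_\p}(\eta(\gamma)) \ge \gamma + 1$ and then specialize to $\gamma = \beta$. The base and successor steps are immediate from the strict monotonicity of $\gamma \mapsto \lev_{M_\p}(\eta(\gamma))$. The only delicate point is the limit step, where strict monotonicity together with the induction hypothesis only yields $\lev_{M_\p}(\eta(\gamma)) \ge \gamma$; however, since $\eta(\gamma) \in \Suc_{\lim}(M_\p)$, its level is of the form $\lambda + 1$ with $\lambda$ a limit, so it is itself never a limit ordinal. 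This rules out equality with the limit ordinal $\gamma$, forcing $\lev_{M_\p}(\eta(\gamma)) > \gamma$.

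For part (2) the strategy is to build $a = (\Gamma, \eta)$ by transfinite recursion, halting as soon as we are forced to pick $\eta(\beta) = t$. At stage $\beta$ I would define the set of candidates
\[
V_\beta = \left\{t' \in \Suc_{\lim}(M_\p) : t' \le t,\ t' > \eta(\beta')\text{ for all } \beta' < \beta,\ t' \models \Gamma(\beta')\text{ for all limit } \beta' < \beta\right\},
\]
with the convention that at $\beta = 0$ the $\Gamma$-constraints are replaced by the single requirement $d_\p(t') = d_\p(t)$ (which is what clause (3) demands). Then take $\eta(\beta)$ to be the $<_\p$-minimum of $V_\beta$ inside the well-ordered set $M_{\le t}$, and at each limit stage $\beta$ simultaneously set $\Gamma(\beta) := \dtp(t / \{\eta(\gamma) : \gamma \le \beta,\ \eta(\gamma) < t\})$, extended arbitrarily to a complete $d$-type over the full set $\{\eta(\gamma) : \gamma \le \beta\}$ in the degenerate case $\eta(\beta) = t$. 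Halt the first time $\eta(\beta) = t$, and set $\alpha := \beta + 1$.

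The engine of the argument is the invariant that $t$ itself always belongs to $V_\beta$: by induction the recursion has not yet halted, so $\eta(\beta') < t$ for every $\beta' < \beta$, and each $\Gamma(\beta')$ was chosen precisely as a $d$-type realized by $t$. Hence $V_\beta$ is nonempty and its minimum exists. Termination is forced because $\langle \eta(\beta)\rangle$ is strictly increasing inside the well-ordered set $M_{\le t}$ of order type $\lev_{M_\p}(t) + 1$, so $\alpha \le \lev_{M_\p}(t) + 1$ and the halting stage $\beta_0 = \alpha - 1$ satisfies $V_{\beta_0} = \{t\}$. Clauses (1)--(6) of Definition \ref{def:q-1-1} then drop straight out of the construction, and the tightness clause (\ref{enu:tightness-1-1}) is automatic: for $\beta < \alpha - 1$ from the $<_\p$-minimality of $\eta(\beta)$ within $V_\beta$, and for $\beta = \alpha - 1$ from the stopping criterion $V_{\alpha - 1} = \{t\}$.

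The main obstacle will be reconciling the two requirements on $\Gamma(\beta)$: it must be a complete $d$-type over the full parameter set $\{\eta(\gamma) : \gamma \le \beta\}$ (which contains $t$ once we halt at a limit stage), yet $t$ must continue to realize it so that $t$ stays in all subsequent candidate sets. The canonical choice $\Gamma(\beta) := \dtp(t / \cdot)$ handles both requirements simultaneously once one pads out the missing equations --- those containing $t$ itself as a parameter --- with arbitrary values in the degenerate case.
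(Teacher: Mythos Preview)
Your proof is correct and follows essentially the same strategy as the paper's. For part (2) the two arguments are nearly identical: build $(\Gamma,\eta)$ by recursion with $\Gamma(\beta)=\dtp(t/\{\eta(\beta'):\beta'\le\beta\})$, keep choosing values strictly below $t$ as long as possible (your explicit minimization over $V_\beta$ is exactly what the paper means when it demands $(\Gamma_\alpha,\eta_\alpha)\in M_\q$ at each stage, since membership already encodes tightness), and halt when forced to take $t$; the paper's treatment of the degenerate limit halting stage matches yours. For part (1) your transfinite induction is correct but more elaborate than needed: the paper simply observes that $\langle \pre(\eta_a(\beta)) \mid \beta<\alpha\rangle$ is a strictly increasing sequence in $M_\p$ lying below $l_a$, which immediately gives $\alpha\le\lev_{M_\p}(l_a)$. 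Your limit-step argument (that $\lev(\eta(\gamma))$ is a successor-of-a-limit and hence cannot equal the limit ordinal $\gamma$) is exactly the content hidden in the paper's use of $\pre$.
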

\begin{proof}
(1) Let $\lev_{M_{\q}}\left(a\right)=\alpha$. Then $\left\langle \pre_{\p}\left(\eta_{a}\left(\beta\right)\right)\left|\,\beta<\alpha\right.\right\rangle $
is an increasing sequence below $l_{a}$, hence $\alpha\leq\lev_{M_{\p}}\left(l_{a}\right)$.

(2) Let $\Gamma$ be the set of ordinals $\gamma$ for which there
is a sequence $\sequence{\left(\Gamma_{\alpha},\eta_{\alpha}\right)}{\alpha<\gamma}$
such that for every $\alpha<\gamma$:
\begin{itemize}
\item [$\star$]$\left(\Gamma_{\alpha},\eta_{\alpha}\right)\in M_{\q}$;
$\lg\left(\eta_{\alpha}\right)=\alpha$; it is an increasing sequence
in $<_{\q}$; $\eta_{\alpha}\left(\beta\right)<t$ for $\beta<\alpha$
and if $\beta$ is a limit then $\Gamma_{\alpha}\left(\beta\right)=\dtp\left(t/\left\{ \eta_{\alpha}\left(\beta'\right)\left|\,\beta'\leq\beta\right.\right\} \right)$.
\end{itemize}
We try to construct such a sequence $\sequence{\left(\Gamma_{\alpha},\eta_{\alpha}\right)}{\alpha<\gamma}$
as long as we can. By (1), $\lev_{M_{\p}}\left(t\right)+1\notin\Gamma$,
so $\gamma<\kappa$ and $\gamma$ must be a successor ordinal. Let
$\beta=\gamma-1$. 

Define $\eta=\eta_{\beta}\cup\left\{ \left(\beta,t\right)\right\} $,
$\Gamma=\Gamma_{\beta}$ unless $\beta$ is a limit, in which case
let $\Gamma\left(\beta\right)$ be any complete type in $x$ over
$\left\{ \eta\left(\beta'\right)\left|\,\beta'\leq\beta\right.\right\} $
containing $\bigcup\left\{ \Gamma_{\beta}\left(\beta'\right)\left|\,\beta'\in\Lim\left(\beta\right)\right.\right\} \cup\left\{ d\left(x\right)=d_{\p}\left(t\right)\right\} $. 

By construction, $\left(\Gamma,\eta\right)\in M_{\q}$.
\end{proof}
Now we build a model in $\mathcal{T}$ using $\P$:
\begin{defn}
\label{def:the model M_c}
\begin{enumerate}
\item Define $\p_{0}=\p_{\cc}$ (see Example \ref{exa:P_c-1-1-1}), and
for $n<\omega$, let $\p_{n+1}=\q\left(\p_{n}\right)$.
\item Define $P_{n}=M_{\p_{n}}$, $d_{n}=d_{\p_{n}}$ and $E_{n}=E_{\p_{n}}$.
\item Let $M_{\cc}=\bigcup_{n<\omega}P_{n}$ (we assume that the $P_{n}$'s
are mutually disjoint). So $P_{n}^{M_{\cc}}=P_{n}$.
\item $M_{\cc}\models T_{\omega}^{\forall}$ when we interpret the relations
in the language as they are induced from each $P_{n}$ and in addition:
\item Define $G_{n}^{M_{c}}:\Suc\left(P_{n}\right)\to\Suc\left(P_{n+1}\right)$
as follows: let $a\in\Suc\left(P_{n}\right)$ and $a'=\suc\left(\limb\left(a\right),a\right)$.
By Proposition \ref{prop:LevelDecreases-1}, there is an element $\left(\Gamma,\eta\right)_{a}\in\Suc\left(P_{n+1}\right)$
such that $l_{\left(\Gamma,\eta\right)_{a}}=a'$. Choose such an element
for each $a$, and define $G_{n}^{M_{\cc}}\left(a\right)=\left(\Gamma,\eta\right)_{a}$.
\end{enumerate}
\end{defn}
\begin{cor}
$M_{\cc}\in\mathcal{T}$.\end{cor}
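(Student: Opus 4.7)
The plan is to verify each of the four clauses of Definition \ref{def:ClassT} and the underlying requirement $M_c\models T_\omega^\forall$ in turn. Most of the work has already been done: the class $\P$ was defined so that each $M_{\p_n}$ is a standard tree (hence clause~(1) is free), and Proposition~\ref{prop:LevelDecreases-1} was designed to feed clauses~(2) and~(3). So the proof is really a matter of carefully assembling the pieces.

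First, for $M_c\models T_\omega^\forall$, each $P_n$ carries the structure of Example~\ref{exa:WOTree} on the standard tree $M_{\p_n}$, the $P_n$'s are mutually disjoint by construction, and $\wedge_n,\suc_n,\pre_n,\limb_n$ come canonically from the well-ordered tree structure. The only non-trivial axiom is the regressivity of $G_n$. To handle this uniformly, I would slightly tighten the definition of $G_n^{M_c}$ so that the choice of $(\Gamma,\eta)_a$ depends only on $a':=\suc(\limb(a),a)$: fix once and for all, for every $t\in\Suc_{\lim}(P_n)$, a lift $(\Gamma,\eta)_t\in P_{n+1}$ with $l_{(\Gamma,\eta)_t}=t$ (exists by Proposition~\ref{prop:LevelDecreases-1}(2)), and set $G_n^{M_c}(a):=(\Gamma,\eta)_{\suc(\limb(a),a)}$. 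Since $a<b$ with $\limb(a)=\limb(b)$ forces $\suc(\limb(a),a)=\suc(\limb(b),b)$, regressivity is immediate.

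For clause~(2): by Proposition~\ref{prop:LevelDecreases-1}(1), $\lev_{P_{n+1}}(G_n^{M_c}(a))\le\lev_{P_n}(l_{G_n^{M_c}(a)})=\lev_{P_n}(a')$. Writing $\lev(a)=\alpha+k+1$ with $\alpha$ a limit and $k\ge 0$, one has $\lev(a')=\alpha+1\le\lev(a)$, so $\lev(G_n^{M_c}(a))\le\lev(a)$ as desired. For clause~(3), in the construction of Proposition~\ref{prop:LevelDecreases-1}(2) the induction must terminate at a successor stage, so $\lg(\eta_{(\Gamma,\eta)_a})$ is a successor ordinal; by Remark~\ref{rem:LevelVsLength-1-1} this means $\lev(G_n^{M_c}(a))$ is a successor, i.e.\ $G_n^{M_c}(a)\in\Suc(P_{n+1})$.

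The only clause that requires a small separate argument is~(4). Let $\langle s_i\mid i<\delta\rangle$ be an increasing sequence in $\Suc(P_n)$ with $s_i\equiv s_j\pmod\omega$ for all $i<j$. Since $s_i\in\Suc(P_n)$, the common residue $k$ must be $\ge 1$, so $\lev(s_i)=\alpha_i+k$ with $\alpha_i$ a limit, and $s_i'=\suc(\limb(s_i),s_i)$ has level $\alpha_i+1$. Because $s_i<s_j$ for $i<j$ and both are at levels congruent to $k$, one must have $\alpha_i<\alpha_j$, whence $s_i'<s_j'$; in particular $s_i'\neq s_j'$. Since $l_{G_n^{M_c}(s_i)}=s_i'$ and $l_{G_n^{M_c}(s_j)}=s_j'$, the two images disagree in their last coordinate and hence are distinct. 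The main potential pitfall, as indicated above, is keeping the definition of $G_n^{M_c}$ genuinely compatible with regressivity; once that is addressed, all four clauses fall out from Proposition~\ref{prop:LevelDecreases-1} and the arithmetic of levels.
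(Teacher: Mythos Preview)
Your proposal is correct and follows essentially the same route as the paper: clause~(2) from Proposition~\ref{prop:LevelDecreases-1}(1), clause~(3) from the successor-stage termination in Proposition~\ref{prop:LevelDecreases-1}(2), and clause~(4) from the observation that $\langle \suc(\limb(s_i),s_i)\mid i<\delta\rangle$ is increasing so the last coordinates $l_{G_n(s_i)}$ differ. Your remark that the choice of $(\Gamma,\eta)_a$ must depend only on $a'=\suc(\limb(a),a)$ to secure regressivity is a genuine clarification of a point the paper leaves implicit in its ``choose such an element for each $a$''.
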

\begin{proof}
All the demands of Definition \ref{def:ClassT} are easy. For instance,
Clause (\ref{enu:GDecreasing}) follows from Proposition \ref{prop:LevelDecreases-1}.
Clause (\ref{enu:GInjective}) follows from the fact that if $\left\langle s_{i}\left|\, i<\delta\right.\right\rangle $
is an increasing sequence in $P_{n}$ such that $s_{i}\equiv s_{j}\modp{\omega}$
then $\left\langle \suc\left(\minb\left(s_{i}\right),s_{i}\right)\left|\, i<\delta\right.\right\rangle $
is increasing, so $l_{G_{n}\left(s_{i}\right)}\neq l_{G_{n}\left(s_{j}\right)}$
for $i\neq j$.\end{proof}
\begin{notation}
Again, we do not write the index $\p_{n}$ when it is clear from the
context (for instance we write $d\left(s_{0},\ldots,s_{k}\right)$
instead of $d_{\p_{n}}\left(s_{0},\ldots,s_{k}\right)$).
\end{notation}
The following lemma and corollary will show that starting with any
HNI sequence in $M_{\cc}$, by applying $H$ to it many times, we
must get a fan.
\begin{lem}
\label{lem:KeyClaim-1-1} Assume that $\bar{s}\in\HNind_{\ai}\left(M_{\cc}\right)$
and $\bar{t}=H\left(\bar{s}\right)\in\HNind_{\ai}\left(M_{\cc}\right)$
(see Definition \ref{def:TheHFunction-1-1}) satisfy that for all
$i<\delta$:
\begin{itemize}
\item $\suc\left(\minb\left(s_{i}\wedge s_{i+1}\right),s_{i}\right)=s_{i}$,
and
\item $\suc\left(\minb\left(t_{i}\wedge t_{i+1}\right),t_{i}\right)=t_{i}$.
\end{itemize}
Then, letting $u_{i}=\suc\left(\minb\left(s_{i}\wedge s_{i+1}\right),s_{i+1}\right)$
and $v_{i}=\suc\left(\minb\left(t_{i}\wedge t_{i+1}\right),t_{i+1}\right)$
for $i<\delta$:
\begin{enumerate}
\item \textup{$\left\langle d\left(u_{i}\right)\left|\,1\leq i<\delta\right.\right\rangle $
is constant.}
\item \textup{$d\left(u_{i_{0}},\ldots,u_{i_{n}}\right)=\pi_{1}\left(d\left(v_{i_{0}},\ldots,v_{i_{n-1}}\right)\right)$
for $1\leq i_{0}<\cdots<i_{n}<\delta$ (recall that $\pi_{1}$ is
defined by $\pi_{1}\left(\pr\left(i,j\right)\right)=i$).}
\end{enumerate}
\end{lem}
\begin{proof}
(1) By definition, $t_{i}=G\left(u_{i}\right)$. Denote $t_{i}=\left(\Gamma_{i},\eta_{i}\right)$.
As $\left\langle t_{i}\wedge t_{i+1}\left|\, i<\delta\right.\right\rangle $
is an increasing sequence (because $\bar{t}\in\HNind_{\ai}\left(M_{\cc}\right)$),
$0<\lev\left(t_{1}\wedge t_{2}\right)$. Let $p=\Gamma_{t_{1}\wedge t_{2}}\left(0\right)\upharpoonright\emptyset$.
Then $p=\Gamma_{i}\left(0\right)\upharpoonright\emptyset$ for all
$1\leq i$ (it may be that $t_{1}\wedge t_{0}=\emptyset$ and in this
case we have no information on $t_{0}$). Assume that $p=\left\{ d\left(x\right)=\varepsilon\right\} $
for some $\varepsilon<\theta$. Then, by Definition \ref{def:q-1-1},
Clauses (\ref{enu:ZeroSatisfies}) and (\ref{enu:satisfies}), $d\left(\eta_{i}\left(\beta\right)\right)=\varepsilon$
for all $1\leq i<\delta$ and $\beta<\lg\left(\eta_{i}\right)$. As
$u_{i}=l_{t_{i}}$ we are done.

(2) Denote $v_{i}=\left(\Gamma_{i}',\eta_{i}'\right)$. By our assumptions
on $\bar{t}$, $t_{i}\mathrela{E^{\nb}}v_{i}$ hence if $\bar{t}$
is increasing then $\bar{v}=\bar{t}$. Assume that it is not increasing.
Then $t_{i}\wedge t_{i+1}<t_{i}$ so $\minb\left(t_{i}\wedge t_{i+1}\right)=t_{i}\wedge t_{i+1}$.
Let $\alpha_{i}=\beta_{i}+1=\lev\left(t_{i}\right)=\lg\left(\eta_{i}'\right)$,
then $\beta_{i}$ is a limit ordinal and $t_{i}\upharpoonright\beta_{i}=v_{i}\upharpoonright\beta_{i}$.
So for $1\leq i$, $\Gamma'_{i}\left(0\right)\upharpoonright\emptyset=\Gamma_{i}\left(0\right)\upharpoonright\emptyset=p$
and $\Gamma_{i}'\upharpoonright\beta_{i}=\Gamma_{i}\upharpoonright\beta_{i}$.

Note that for $1\leq i$, $l_{t_{i}}$ and $l_{v_{i}}$ are both below
$u_{i+1}=l_{t_{i+1}}$ (as $v_{i}\leq t_{i+1}$ and ${l_{t_{i}}=u_{i}<u_{i+1}}$),
that they both satisfy $p$ and that they both satisfy the equations
in $\Gamma\left(\beta\right)$ for each limit $\beta<\beta_{i}$,
so if for instance $l_{t_{i}}<l_{v_{i}}$, we will have a contradiction
to Definition \ref{def:q-1-1}, Clause (\ref{enu:tightness-1-1}). 

So, in any case (whether or not $\bar{t}$ is increasing), we have
$l_{v_{i}}=l_{t_{i}}=u_{i}$.

By choice of $\bar{v}$ and the assumptions on $\bar{t}$, $\bar{v}$
is increasing so $d$ is defined on finite subsets of it. 

Assume $1\leq i_{0}<\cdots<i_{n}<\delta$. Then for every $\sigma<\theta$,
by the choice of $d$ in Definition \ref{def:q-1-1}:
\begin{itemize}
\item [$\boxtimes$]$\pi_{1}\left(d\left(v_{i_{0}},\ldots,v_{i_{n-1}}\right)\right)=\sigma$
iff 
\item [$\boxtimes$]$d\left(l_{v_{i_{0}}},\ldots,l_{v_{i_{n-1}}},x\right)=\sigma\in\Gamma'_{i_{n-1}}\left(\beta_{i_{n-1}}\right)$
iff
\item [$\boxtimes$]$d\left(l_{v_{i_{0}}},\ldots,l_{v_{i_{n-1}}},x\right)=\sigma\in\Gamma'_{i_{n}}\left(\beta_{i_{n-1}}\right)$
(because $\Gamma'_{i_{n}}\upharpoonright\alpha_{i_{n-1}}=\Gamma'_{i_{n-1}}\upharpoonright\alpha_{i_{n-1}}$)
iff
\item [$\boxtimes$]$d\left(l_{v_{i_{0}}},\ldots,l_{v_{i_{n-1}}},l_{v_{i_{n}}}\right)=\sigma$
(this follows from Clause (\ref{enu:satisfies}) of Definition \ref{def:q-1-1})
iff
\item [$\boxtimes$]$d\left(u_{i_{0}},\ldots,u_{i_{n}}\right)=\sigma$ (because
$l_{v_{i}}=u_{i}$).
\end{itemize}
\end{proof}
\begin{cor}
\label{cor:mustBeFan-1-1}If $\bar{s}\in\HNind_{\ai}\left(M_{\cc}\right)$
then there must be some $n<\omega$ such that $H^{\left(n\right)}\left(\bar{s}\right)\in\HNind_{f}\left(M_{\cc}\right)$
(see Definition \ref{def:TheHFunction-1-1}).\end{cor}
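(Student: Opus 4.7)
The plan is to argue by contradiction. Assume $H^{(n)}(\bar{s}) \in \HNind_{\ai}(M_c)$ for every $n<\omega$; I shall derive a violation of the hardness of one of the posets $\p_m$.

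First, by Corollary \ref{cor:TheHFunction-1-1}, the contrary assumption supplies some $K<\omega$ such that, writing $\bar{s}^{(n)} = H^{(n)}(\bar{s})$, one has $\suc(\minb(s^{(n)}_i \wedge s^{(n)}_{i+1}), s^{(n)}_i) = s^{(n)}_i$ for every $n \geq K$ and every $i<\delta$. After replacing $\bar{s}$ by $\bar{s}^{(K)}$ I may assume this holds for every $n<\omega$. Pick $m$ with $\bar{s} \subseteq P_m$, so $\bar{s}^{(n)} \subseteq P_{m+n}$. Each consecutive pair $(\bar{s}^{(n)}, \bar{s}^{(n+1)})$ then satisfies the hypotheses of Lemma \ref{lem:KeyClaim-1-1}, since both sequences satisfy the required stabilization equation.

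For each $n<\omega$ and $i<\delta$ set $u^{(n)}_i = \suc(\minb(s^{(n)}_i \wedge s^{(n)}_{i+1}), s^{(n)}_{i+1}) \in \Suc_{\lim}(P_{m+n})$. Since $s^{(n)}_i \wedge s^{(n)}_{i+1}$ strictly increases with $i$ (as $\bar{s}^{(n)} \in \ai(M_c)$), a short case check under the stabilization assumption shows that $\langle u^{(n)}_i : 1 \leq i < \delta\rangle$ is strictly $<_{m+n}$-increasing. Now apply Lemma \ref{lem:KeyClaim-1-1} at every level $n$: its first conclusion gives that $d_{\p_{m+n}}(u^{(n)}_i)$ is constant for $1\leq i<\delta$, and its second conclusion gives, for every $1\leq i_0<\dots<i_k<\delta$,
\[
d_{\p_{m+n}}(u^{(n)}_{i_0},\dots,u^{(n)}_{i_k}) = \pi_1\bigl(d_{\p_{m+n+1}}(u^{(n+1)}_{i_0},\dots,u^{(n+1)}_{i_{k-1}})\bigr).
\]
Iterating this identity $k$ times shrinks the arity on the right-hand side down to $1$, where the first conclusion takes over: for every $k<\omega$ and every $1\leq i_0<\dots<i_k<\delta$,
\[
d_{\p_m}(u^{(0)}_{i_0},\dots,u^{(0)}_{i_k}) = \pi_1^{\,k}\bigl(d_{\p_{m+k}}(u^{(k)}_{i_0})\bigr),
\]
which does not depend on the particular tuple. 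Hence $\langle u^{(0)}_i : 1\leq i<\delta\rangle$ is an increasing $\delta$-sequence in $\Suc_{\lim}(P_m)$ which is homogeneous in the sense of Definition \ref{def:P-1-1-1}(5), contradicting the hardness of $\p_m$.

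The main obstacle is the low-level bookkeeping around $u^{(n)}_i$: verifying that the sequence is strictly $<_{m+n}$-increasing and lies in $\Suc_{\lim}(P_{m+n})$ requires splitting into the case where $\bar{s}^{(n)}$ is genuinely increasing (in which case $u^{(n)}_i = s^{(n)}_i$) and the case where $s^{(n)}_i \wedge s^{(n)}_{i+1}$ is a limit strictly below $s^{(n)}_i$, where $u^{(n)}_i$ sits just above this limit on the branch of $s^{(n)}_{i+1}$ and the strict increase of the meets $s^{(n)}_i \wedge s^{(n)}_{i+1}$ propagates to the $u^{(n)}_i$. The telescoping of Lemma \ref{lem:KeyClaim-1-1} is combinatorially straightforward once one tracks that at each step the dropped index is the largest one and the level rises by one, so that after $k$ steps we land exactly on the arity-one clause at level $m+k$.
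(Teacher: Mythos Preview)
Your proof is correct and follows essentially the same approach as the paper: both argue by contradiction, invoke Corollary~\ref{cor:TheHFunction-1-1} to reduce to the stabilized case, and then use Lemma~\ref{lem:KeyClaim-1-1} inductively to show that the sequence $\langle u^{(0)}_i : 1\le i<\delta\rangle$ is $d$-homogeneous, contradicting hardness. The paper compresses your telescoping argument into a one-line inner claim (``prove by induction on $n$ using Lemma~\ref{lem:KeyClaim-1-1}''), whereas you spell out the iteration and the verification that the $u^{(n)}_i$ are strictly increasing in $\Suc_{\lim}$ explicitly; the content is the same.
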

\begin{proof}
If not, by Corollary \ref{cor:TheHFunction-1-1}, for all $n<\omega$,
$H^{\left(n\right)}\left(\bar{s}\right)\in\HNind_{\ai}\left(M_{\cc}\right)$.
Moreover, there exists some $K<\omega$ such that for all $K\leq n$,
if $\bar{t}=H^{\left(n\right)}\left(\bar{s}\right)$ then $\suc\left(\minb\left(t_{i}\wedge t_{i+1}\right),t_{i}\right)=t_{i}$.
Without loss, $K=0$ (i.e., this is true also for $\bar{s}$).
\begin{claim*}
If $\bar{s}$ is such a sequence then for all $n<\omega$, $d\left(u_{i_{0}},\ldots,u_{i_{n-1}}\right)$
is constant for all $1\leq i_{0}<\cdots<i_{n-1}<\delta$ where $u_{i}=\suc\left(\minb\left(s_{i}\wedge s_{i+1}\right),s_{i+1}\right)$
for $i<\delta$.\end{claim*}
\begin{proof}
(of claim) Prove by induction on $n$ using Lemma  \ref{lem:KeyClaim-1-1}.
\end{proof}
But this claim contradicts the fact that for all $k<\omega$, $\p_{k}$
is hard.
\end{proof}
The next lemma and corollaries are the main conclusion of this section:
\begin{lem}
\label{lem:MustBeGood-1-1} If $\bar{s}\in\HNind_{\ai}\left(M_{\cc}\right)$
and $\bar{t}=H\left(\bar{s}\right)\in\HNind_{f}\left(M_{\cc}\right)$
then $\neg\left(v_{i}\mathrela Ev_{j}\right)$ for $i<j<\delta$ where
$v_{i}=\suc\left(\minb\left(t_{i+1}\wedge t_{i}\right),t_{i}\right)$.\end{lem}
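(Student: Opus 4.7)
The plan is to suppose $v_{i}Ev_{j}$ for some $i<j<\delta$, unfold this into a family of $d_{\p_{n}}$-equalities by combining Definition~\ref{def:q-1-1} with the explicit construction of $t_{i}=G(w_{i})$, cascade those equalities through the nested definition of $d_{\q}$ down to the ground coloring $c$, and extract a homogeneous $\delta$-sub-sequence that contradicts Assumption~\ref{ass:NoColoring}.

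Because $\bar{t}\in\HNind_{f}(M_{c})$, the meet $u:=t_{i}\wedge t_{i+1}$ is constant in $i$; writing $\alpha:=\lev(\minb(u))$ (a limit), every $v_{i}=\suc(\minb(u),t_{i})$ has predecessor $\minb(u)$ and lies at level $\alpha+1$ in $P_{n+1}$, so all the $v_{i}$'s share a single $E^{\nb}$-class. In the main case, when $u$ is itself a limit, the $v_{i}$'s are pairwise distinct: any equality $v_{i}=v_{j}$ would produce a common element strictly above $u$ below both $t_{i}$ and $t_{j}$, contradicting the maximality of $u$ as their meet. Assume toward contradiction that $v_{i}Ev_{j}$. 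Unfolding Definition~\ref{def:q-1-1} at $\beta=\alpha$ and using $a_{l}:=\eta_{v_{i}}(\alpha_{l})=\eta_{v_{j}}(\alpha_{l})$ for $\alpha_{l}<\alpha$ (forced by $E^{\nb}$), the non-trivial part of the $E$-condition becomes the biconditional
\[
d(a_{0},\ldots,a_{k-1},l_{v_{i}},x)=\varepsilon\in\Gamma_{v_{i}}(\alpha)\;\Longleftrightarrow\;d(a_{0},\ldots,a_{k-1},l_{v_{j}},x)=\varepsilon\in\Gamma_{v_{j}}(\alpha)
\]
for all $\alpha_{0}<\cdots<\alpha_{k-1}<\alpha$ and all $\varepsilon<\theta$. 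Since $v_{i}=t_{i}\upharpoonright(\alpha+1)$ and $t_{i}=G(w_{i})$, Proposition~\ref{prop:LevelDecreases-1}(2) identifies $\Gamma_{v_{i}}(\alpha)=\Gamma_{t_{i}}(\alpha)=\dtp_{\p_{n}}(w_{i}/\{\eta_{v_{i}}(\beta):\beta\leq\alpha\})$, so the biconditional is exactly the concrete family of equalities
\[
d_{\p_{n}}(a_{0},\ldots,a_{k-1},l_{v_{i}},w_{i})=d_{\p_{n}}(a_{0},\ldots,a_{k-1},l_{v_{j}},w_{j}).
\]

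Now cascade: by Definition~\ref{def:q-1-1}, each $d_{\p_{m}}$ factors as $\pr(\varepsilon',c(\lev_{P_{m-1}}(\cdot)))$, so an equality of $d_{\p_{m}}$-values amounts to an equality of the outer $c$-color together with an equality of inner $\varepsilon'$'s, itself an equality of $d_{\p_{m-1}}$-values. After $n$ iterations we reach $d_{\p_{0}}=c$, yielding matching $c$-equalities at every layer. Hereditary near-indiscernibility of $\bar{s}$ ensures that each sequence entering the cascade ($\bar{w}$, $\bar{v}$, and every intermediate $l$- and level-sequence) is NI with a common controlling integer; sequential homogeneity of those NI sequences then promotes the equalities forced at the single pair $(i,j)$ to hold uniformly along a $\delta$-sub-sequence of indices, and the collected layer-by-layer $c$-equalities assemble into a genuinely homogeneous $\delta$-sub-sequence for $c$ on $\kappa$, contradicting Assumption~\ref{ass:NoColoring}. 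The degenerate subcase in which $u$ is the immediate successor of the limit $\minb(u)$, formally giving $v_{i}=u$ identically, is ruled out by the same cascade applied one level higher: $u\leq t_{i}$ for every $i$ already forces $\Gamma_{t_{i}}(\alpha)=\Gamma_{u}(\alpha)$, hence $\dtp_{\p_{n}}(w_{i}/\eta_{u})$ is constant in $i$, and cascading this constancy down to $c$ again produces a forbidden homogeneous $\delta$-sub-sequence. The main obstacle is the cascade step: one must carefully track, through the $n$ layers, how the $E$-biconditional at the top interacts with the sequential homogeneity of each NI sub-sequence so that the resulting layer-by-layer $c$-equalities line up on a single $\delta$-indexed sub-sequence rather than forming isolated coincidences the coloring $c$ could accommodate.
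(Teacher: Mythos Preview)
Your approach diverges sharply from the paper's and contains genuine gaps.

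First, the paper's argument is local and structural: it never descends to the coloring $c$.  Writing $t=t_i\wedge t_j$ and $u_i=\suc(t,t_i)$, one observes that the elements $l_{u_i}$ are pairwise comparable in $P_n$ (because $l_{u_i}\leq l_{t_i}=\suc(\minb(s_i\wedge s_{i+1}),s_{i+1})\leq s_{i+1}\wedge s_{i+2}$ and the latter sequence is increasing).  The \emph{tightness} clause~(\ref{enu:tightness-1-1}) of Definition~\ref{def:q-1-1} then forces $l_{u_i}=l_{u_j}$, hence $\eta_{u_i}=\eta_{u_j}$; since $u_i\neq u_j$ this forces $\Gamma_{u_i}\neq\Gamma_{u_j}$, which in turn forces $\lev(t)$ to be a limit (otherwise $\Gamma_{u_i}=\Gamma_t=\Gamma_{u_j}$), so $v_i=u_i$ and $\Gamma_{v_i}(\alpha)\neq\Gamma_{v_j}(\alpha)$, yielding $\neg(v_iEv_j)$ directly.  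You never invoke clause~(\ref{enu:tightness-1-1}), which is the whole point of its presence in the construction.

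Second, your cascade is not well-formed.  In the definition of $d_{\q}$, the second coordinate is $c(\lev_{\q}(a_0),\ldots,\lev_{\q}(a_{n-1}))$ with levels taken in $M_{\q}$, not $M_{\p}$; and the first coordinate $\varepsilon$ is a colour read off the type $\Gamma_{a_{n-1}}(\lg(a_{n-1})-1)$, not a $d_{\p}$-value on an actual tuple.  So an equality $d_{\p_m}(\cdot)=d_{\p_m}(\cdot)$ does not unwind to an equality of $d_{\p_{m-1}}$-values in the way you describe.  (The paper does establish a one-step relation of this flavour in Lemma~\ref{lem:KeyClaim-1-1}, but only under the extra hypothesis that $H(\bar s)$ is again almost increasing, which is precisely what fails here.)

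Third, even granting the cascade, your promotion from a single pair $v_iEv_j$ to a $c$-homogeneous $\delta$-sequence cannot work via indiscernibility: the relation $E$ is \emph{not} in the language $L_\omega$, so neither NI nor sequential homogeneity of $\bar v$ says anything about which pairs are $E$-related.  A single assumed $E$-coincidence gives you no leverage on other indices.

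Finally, your citation of Proposition~\ref{prop:LevelDecreases-1}(2) for $\Gamma_{t_i}(\alpha)=\dtp_{\p_n}(w_i/\ldots)$ is off: that proposition only asserts existence of some $(\Gamma,\eta)$ with $l_{(\Gamma,\eta)}=w_i$, and the definition of $G_n^{M_c}$ makes an arbitrary such choice.  The identification you want does follow from clause~(\ref{enu:satisfies}) when $\alpha<\lev(t_i)-1$, but not when $\lev(t_i)=\alpha+1$, a case you have not excluded.
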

\begin{proof}
Let $t=t_{0}\wedge t_{1}$, so $t=t_{i}\wedge t_{j}$ for all $i<j<\delta$.
Let $u_{i}=\suc\left(t,t_{i}\right)$. As $t_{i}\neq t_{j}$ for $i<j<\delta$,
$u_{i}\neq u_{j}$. In addition
\[
l_{u_{i}}\leq l_{t_{i}}=\suc\left(\minb\left(s_{i}\wedge s_{i+1}\right),s_{i+1}\right)\leq s_{i+1}\wedge s_{i+2}
\]
and $\left\langle s_{i}\wedge s_{i+1}\left|\, i<\delta\right.\right\rangle $
is increasing so $l_{u_{i}}$ and $l_{u_{j}}$ are comparable. 

First assume that $\alpha=\lev\left(t\right)>0$. Then $\Gamma_{t}\left(0\right)=\Gamma_{t_{i}}\left(0\right)$
for $i<\delta$. For all $i<j<\delta$, $l_{u_{i}}\models\Gamma_{u_{j}}\left(0\right)\upharpoonright\emptyset$,
$l_{u_{i}}$ is greater than $\eta_{u_{j}}\left(\beta\right)=\eta_{t}\left(\beta\right)$
for all $\beta<\alpha$ and $l_{u_{i}}\models\Gamma_{u_{j}}\left(\beta\right)=\Gamma_{t}\left(\beta\right)$
for all limit $\beta<\alpha$. So by Definition \ref{def:q-1-1},
Clause (\ref{enu:tightness-1-1}), $l_{u_{i}}=l_{u_{j}}$, so $\eta_{u_{i}}=\eta_{u_{j}}$
for all $i<j<\delta$.

But since $u_{i}\neq u_{j}$, it necessarily follows that $\Gamma_{u_{i}}\neq\Gamma_{u_{j}}$.
If $\alpha=\beta+1$ for some $\beta$, then by definition of the
function $\q$, $\Gamma_{u_{i}}=\Gamma_{u_{i}}\upharpoonright\alpha=\Gamma_{t}$
(because $\Gamma$ was defined only for limit ordinals). So necessarily
$\alpha$ is a limit, and it follows that $\limb\left(t\right)=t$
so $v_{i}=u_{i}$. Now it is clear that $\Gamma_{v_{i}}\left(\alpha\right)\neq\Gamma_{v_{j}}\left(\alpha\right)$
and by definition of $E$, $\neg\left(v_{i}\mathrela Ev_{j}\right)$
for all $i<j<\delta$.

If $\alpha=0$, then as before $v_{i}=u_{i}$ (because $\limb\left(t\right)=t$).
We cannot use the same argument (because $\Gamma_{t}\left(0\right)$
is not defined), so we take care of each pair $i<j<\delta$ separately.
If $\Gamma_{v_{i}}\left(0\right)\upharpoonright\emptyset=\Gamma_{v_{j}}\left(0\right)\upharpoonright\emptyset$
then the argument above will work and $\neg\left(v_{i}\mathrela Ev_{j}\right)$.
If $\Gamma_{v_{i}}\left(0\right)\upharpoonright\emptyset\neq\Gamma_{v_{j}}\left(0\right)\upharpoonright\emptyset$,
then $\neg\left(v_{i}\mathrela Ev_{j}\right)$ follows directly from
the definition.
\end{proof}
Finally we have
\begin{cor}
If $\bar{s}\in\HNind_{\ai}\left(M_{\cc}\right)$, then there is some
$\bar{v}\in\HNind_{f}\left(M_{\cc}\right)$ such that $v_{i}=\suc\left(\limb\left(v_{i}\right),v_{i}\right)$,
$v_{i}\mathrela{E^{\nb}}v_{j}$ but $\neg\left(v_{i}\mathrela Ev_{j}\right)$
for $i<j<\delta$.\end{cor}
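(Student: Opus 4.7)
The plan is to iterate $H$ on $\bar{s}$ until the sequence first lands in $\HNind_f(M_c)$, and then extract $\bar{v}$ from the witnesses supplied by Lemma \ref{lem:MustBeGood-1-1}.

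By Corollary \ref{cor:mustBeFan-1-1} there exists some $n<\omega$ with $H^{(n)}(\bar{s})\in\HNind_f(M_c)$; I take $n$ minimal. The classes $\HNind_{\ai}(M_c)$ and $\HNind_f(M_c)$ are disjoint — in a non-constant fan $s_0\wedge s_1=s_1\wedge s_2$, while the almost-increasing condition forces the strict inequality — so $n\geq 1$. Set $\bar{s}'=H^{(n-1)}(\bar{s})$ and $\bar{t}'=H(\bar{s}')=H^{(n)}(\bar{s})$. Minimality of $n$ combined with Proposition \ref{prop:Dichotomy-1-1} then gives $\bar{s}'\in\HNind_{\ai}(M_c)$ and $\bar{t}'\in\HNind_f(M_c)$, so Lemma \ref{lem:MustBeGood-1-1} applies to the pair $(\bar{s}',\bar{t}')$.

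I would then define $v_i=\suc(\minb(t'_{i+1}\wedge t'_i),t'_i)$ as in that lemma, which immediately yields $\neg(v_iEv_j)$ for $i<j<\delta$. Writing $t$ for the constant value of $t'_i\wedge t'_j$, the argument inside the proof of Lemma \ref{lem:MustBeGood-1-1} already shows that $\lev(t)$ is either $0$ or a limit ordinal, so $\minb(t)=t$ and $v_i=\suc(t,t'_i)$. Hence $v_i$ is an immediate successor of $t$, which gives $\suc(\limb(v_i),v_i)=v_i$, and since the $v_i$ share the same predecessor $t$ we get $v_iE^{\nb}v_j$; moreover $v_i\wedge v_j=t$ is constant, so $\bar{v}$ is a fan.

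It remains to verify $\bar{v}\in\HNind_f(M_c)$. Non-constancy is immediate from $\neg(v_iEv_j)$. The only step I expect to need any care is the HNI property of $\bar{v}$: since $v_i=\sigma(t'_i,t'_{i+1})$ for the fixed two-variable term $\sigma(x,y)=\suc(\minb(x\wedge y),x)$, any term applied to $\bar{v}$ unfolds into a term applied to $\bar{t}'$, and the fact that $\bar{t}'\in\HNind(M_c)$ then guarantees that the resulting sequence is NI. Together with the fan condition this yields $\bar{v}\in\HNind_f(M_c)$ with all the asserted properties, completing the plan.
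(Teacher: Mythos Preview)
Your proof is correct and follows essentially the same route as the paper: take the minimal iterate of $H$ landing in $\HNind_f$, apply Lemma~\ref{lem:MustBeGood-1-1} to the last pair, and read off $\bar v$. You supply a bit more detail than the paper does (the explicit verification that $\bar v$ inherits HNI via the term $\sigma(x,y)=\suc(\minb(x\wedge y),x)$), and your appeal to the internals of the lemma's proof to get $\minb(t)=t$ matches what the paper does implicitly; note, incidentally, that this step is not strictly needed, since $v_i=\suc(\minb(t),t'_i)$ already makes $\minb(t)$ the common predecessor and $\limb(v_i)=\minb(t)$ by the axioms, regardless of whether $t$ itself is at a limit level.
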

\begin{proof}
By Corollary \ref{cor:mustBeFan-1-1}, there is some minimal $n<\omega$
such that $\bar{t}=H^{\left(n+1\right)}\left(\bar{s}\right)\in\HNind_{f}\left(M_{\cc}\right)$.
Let $v_{i}=\suc\left(\minb\left(t_{i+1}\wedge t_{i}\right),t_{i}\right)$
for $i<\delta$. By Lemma \ref{lem:MustBeGood-1-1}, we have that
$v_{i}\mathrela{E^{\nb}}v_{j}$ but $\neg\left(v_{i}\mathrela Ev_{j}\right)$
for $i<j<\delta$ (in particular $v_{i}\neq v_{j}$). So necessarily
$t=t_{i}\wedge t_{j}$ is a limit and $v_{i}=\suc\left(t,v_{i}\right)$. \end{proof}
\begin{cor}
\label{cor:MainCorIacc}If there is some $\bar{s}\in\ind\left(M_{\cc}\right)$
such that $s_{i}\in P_{0}^{M_{\cc}}$ for all $i<\delta$, then there
is some $\bar{v}\in\ind_{f}\left(M_{\cc}\right)$ such that $v_{i}\in\Suc_{\lim}\left(M_{\cc}\right)$,
$v_{i}\mathrela{E^{\nb}}v_{j}$ but $\neg\left(v_{i}\mathrela Ev_{j}\right)$
for $i<j<\delta$.\end{cor}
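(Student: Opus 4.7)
The plan is to reduce the hypothesis to that of the immediately preceding corollary and then promote its $\HNind_{f}$ output to a genuinely indiscernible sequence. Since $\bar{s}\in\ind(M_{c})$ is non-constant and indiscernible for quantifier-free formulas, the example following Definition~\ref{def:HNI-1-1} gives $\bar{s}\in\HNind(M_{c})$. Now $\bar{s}\subseteq\Suc(P_{0})$, and the tree $P_{0}=M_{\p_{0}}=\kappa$ is linearly ordered, so $s_{i}\wedge s_{j}=\min(s_{i},s_{j})$; well-ordering rules out any infinite strictly decreasing chain, and sequential homogeneity together with non-constancy forces $\bar{s}$ to be strictly increasing. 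In particular $s_{i}\wedge s_{i+1}=s_{i}$ is strictly increasing in $i$, so the $\HNind_{f}$ alternative of Proposition~\ref{prop:Dichotomy-1-1} is impossible and $\bar{s}\in\HNind_{\ai}(M_{c})$.

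Applying the previous corollary to $\bar{s}$ produces $\bar{w}\in\HNind_{f}(M_{c})$ with $w_{i}=\suc(\limb(w_{i}),w_{i})\in\Suc_{\lim}(M_{c})$, $w_{i}E^{\nb}w_{j}$, and $\neg(w_{i}Ew_{j})$ for all $i<j<\delta$.

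To promote $\bar{w}$ from HNI to indiscernible, I use the sub-sequence property from clause~(1) of Definition~\ref{def:NI-1-1}: fix $n<\omega$ such that every subsequence of $\bar{w}$ whose consecutive indices differ by at least $n$ is indiscernible with a common quantifier-free type. Define $v_{\alpha}=w_{n\cdot\alpha}$ for $\alpha<\delta$; since $n\cdot\delta=\delta$ when $n<\omega$ and $\delta$ is a limit ordinal, this subsequence has length $\delta$, and consecutive indices differ by $n$. Then $\bar{v}$ is indiscernible by construction, non-constant (the $w_{i}$'s are pairwise non-$E$-equivalent, hence distinct), lies in $\Suc_{\lim}(M_{c})$, and inherits the constant meet $v_{\alpha}\wedge v_{\beta}=w_{0}\wedge w_{1}$ from the fan structure of $\bar{w}$; so $\bar{v}\in\ind_{f}(M_{c})$ with the required $v_{\alpha}E^{\nb}v_{\beta}$ and $\neg(v_{\alpha}Ev_{\beta})$.

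The heavy lifting sits in the prior corollary; the only mildly delicate point at this stage is the ordinal identity $n\cdot\delta=\delta$, which is what lets the indiscernible subsequence retain full length $\delta$ while the $E^{\nb}$ and $\neg E$ relations, being pairwise, pass to subsequences automatically.
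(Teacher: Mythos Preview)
Your proof is correct and follows essentially the same route as the paper: observe that $P_{0}=\kappa$ forces $\bar{s}$ to be increasing (hence in $\HNind_{\ai}(M_{c})$), invoke the preceding corollary to obtain an HNI fan $\bar{w}$ with the required $E^{\nb}/E$ behaviour, and then thin by multiples of the sub-sequence constant $n$ to extract a genuinely indiscernible fan of length $\delta$. Your write-up is in fact more careful than the paper's, making explicit the ordinal identity $n\cdot\delta=\delta$ and why the pairwise properties survive passage to the subsequence.
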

\begin{proof}
Since $P_{0}=\kappa$, any sequence $\bar{s}$ in $\ind\left(M_{\cc}\right)$
in $P_{0}$ must be increasing. So by the last corollary there is
some $\bar{v}\in\HNind_{f}\left(M_{\cc}\right)$ like there. But then
by sparseness (see Definition \ref{def:NI-1-1}) there is some $n<\omega$
such that $\left\langle v_{ni}\left|\, i<\delta\right.\right\rangle $
is indiscernible. \end{proof}
\begin{rem}
In this section it becomes clear why we needed to use discrete trees
and not dense ones (as in \cite{KaSh946}). In Corollary \ref{cor:MainCorIacc},
we started with an increasing sequence in $P_{0}=\kappa$, and then
applied a definable map on it, to get a new HNI sequence $\bar{s}$,
but this sequence might be almost increasing and not increasing (i.e.,
in $\ind_{\ai}$). Since we wanted the coloring function $d$ to be
defined on increasing sequences, we needed again to get an increasing
sequence, so this is done by taking $s_{i}\wedge s_{i+1}$. This sequence
is increasing, but in order for the coloring $d$ to affect the coloring
of the original sequence (as in Lemma \ref{lem:KeyClaim-1-1}), we
need this definable map to give us a successor of $s_{i}\wedge s_{i+1}$.
Trial and error has shown that adding the function ``successor to
the meet'' instead of just successor results in losing AP, so we
needed the successor function. The predecessor function is not necessary
(in existentially closed models, if $x>\limb_{\eta}\left(x\right)$,
$x$ has a predecessor), but there is no price to adding it, and it
simplifies the theory a bit.
\end{rem}

\section{\label{sec:The-main-theorem}Proof of the main theorem}

In this section we prove Main Theorem \ref{mainthmA}. 

We start with the easy direction. 
\begin{prop}
\label{prop:Easy direction}Let $\kappa,\theta$ be cardinals and
$\delta\geq\omega$ a limit ordinal. If $\kappa\to\left(\delta\right)_{\theta}^{<\omega}$
then for every $n\leq\omega$ and every theory $T$ of cardinality
$\left|T\right|\leq\theta$, $\kappa\to\left(\delta\right)_{T,n}$.\end{prop}
\begin{proof}
For convenience, let $\bar{x}_{i}$ for $i<\omega$ be disjoint $n$-tuples
of variables and let $L\left(T\right)$ be the set of formulas in
$T$ in $\set{\bar{x}_{i}}{i<\omega}$. 

Let $\left\langle \bar{a}_{i}\left|\, i<\kappa\right.\right\rangle $
be a sequence of $n$-tuples in a model $M\models T$. Define $c:\left[\kappa\right]^{<\omega}\to L\left(T\right)\cup\left\{ 0\right\} $
as follows:

Given an increasing sequence $\eta\in\kappa^{<\omega}$, if $\lg\left(\eta\right)$
is odd, then $c\left(\eta\right)=0$. If not, assume it is $2k$ and
that $\eta=\left\langle \alpha_{i}\left|\, i<2k\right.\right\rangle $.
If $\bar{a}_{\alpha_{0}}\cdots\bar{a}_{\alpha_{k-1}}\equiv\bar{a}_{\alpha_{k}}\cdots\bar{a}_{\alpha_{2k-1}}$
then $c\left(\eta\right)=0$. If not there is a formula $\varphi\left(\bar{x}_{0},\ldots,\bar{x}_{k-1}\right)$
such that $M\models\varphi\left(\bar{a}_{\alpha_{0}},\ldots,\bar{a}_{\alpha_{k-1}}\right)\land\neg\varphi\left(\bar{a}_{\alpha_{k}},\ldots,\bar{a}_{\alpha_{2k-1}}\right)$,
so choose such a $\varphi$ and define $c\left(\eta\right)=\varphi$.
By assumption there is a sub-sequence $\left\langle \bar{a}_{\alpha_{i}}\left|\, i<\delta\right.\right\rangle $
on which $c$ is homogeneous. Without loss, assume that $\alpha_{i}=i$
for $i<\delta$.

It follows that $\left\langle \bar{a}_{i}\left|\, i<\delta\right.\right\rangle $
is an indiscernible sequence:

Suppose there are some $i_{0}<i_{1}<\cdots<i_{2k-1}<\delta$ such
that $\bar{a}_{i_{0}}\cdots\bar{a}_{i_{k-1}}\not\equiv\bar{a}_{i_{k}}\cdots\bar{a}_{i_{2k-1}}$.
Since $\delta$ is limit there are some ordinals $i_{2k},\ldots,i_{3k-1}$
such that $i_{2k-1}<i_{2k}<\cdots<i_{3k-1}<\delta$. 

Since $c$ is homogeneous, there is a formula $\varphi$ such that
$c\left(\left\langle i_{k},\ldots,i_{3k-1}\right\rangle \right)=c\left(\left\langle i_{0},\ldots,i_{2k-1}\right\rangle \right)=\varphi$,
meaning that

\[
M\models\varphi\left(\bar{a}_{i_{0}},\ldots,\bar{a}_{i_{k-1}}\right)\land\neg\varphi\left(\bar{a}_{i_{k}},\ldots,\bar{a}_{i_{2k-1}}\right)
\]
 and 

\[
M\models\varphi\left(\bar{a}_{i_{k}},\ldots,\bar{a}_{i_{2k-1}}\right)\land\neg\varphi\left(\bar{a}_{i_{2k}},\ldots,\bar{a}_{i_{3k-1}}\right)
\]
 --- a contradiction.

Now let $i_{0}<\cdots<i_{k-1}<\delta$ be any increasing sequence.
Let $j<\delta$ be greater than $i_{k-1}$. Then $\bar{a}_{i_{0}}\cdots\bar{a}_{i_{k-1}}\equiv\bar{a}_{j}\cdots\bar{a}_{j+k-1}\equiv\bar{a}_{0}\cdots\bar{a}_{k-1}$
and we are done.
\end{proof}
From now on let $\Ss=2^{<\omega}$.

As in Notation \ref{not: what is indiscernible, modulo}, when we
say indiscernible, we mean indiscernible for quantifier free formulas. 

The proof uses the following construction:
\begin{construction}
\label{con:construction A}Assume $S'\subseteq\Ss$ is such that $\nu\in S'\Rightarrow\nu\upharpoonright k\in S'$
for every $k\leq\lg\left(\nu\right)$. Assume $N\models T_{S'}^{\forall,\theta}$
and that for every $\nu\in S'$, if $\nu\concat\left\langle \varepsilon\right\rangle \notin S'$
for $\varepsilon\in\left\{ 0,1\right\} $, we have a model $M_{\nu}^{\varepsilon}\models T_{\Ss}^{\forall,\theta}$.
We may assume all models are disjoint. We build a model $M\models T_{\Ss}^{\forall,\theta}$
such that $M\upharpoonright L_{S'}^{\theta}\supseteq N$ and: for
every $\nu\in S'$ and $\varepsilon\in\left\{ 0,1\right\} $ such
that $\nu\concat\left\langle \varepsilon\right\rangle \notin S'$
and for every $\eta\in\Ss$, $P_{\nu\concat\left\langle \varepsilon\right\rangle \concat\eta}^{M}=P_{\eta}^{M_{\nu}^{\varepsilon}}$.
In general, for every symbol $R_{\eta}$ from $L_{\Ss}^{\theta}$,
let $R_{\nu\concat\left\langle \varepsilon\right\rangle \concat\eta}^{M}=R_{\eta}^{M_{\nu}^{\varepsilon}}$.
For instance, $e_{\nu\concat\left\langle \varepsilon\right\rangle \concat\eta,i}^{M}=e_{\eta,i}^{M_{\nu}^{\varepsilon}}$
for $i<\theta$ and $G_{\nu\concat\left\langle \varepsilon\right\rangle \concat\eta_{1},\nu\concat\left\langle \varepsilon\right\rangle \concat\eta_{2}}^{M}=G_{\eta_{1},\eta_{2}}^{M_{\nu}^{\varepsilon}}$
for $\eta_{1}<_{\suc}\eta_{2}$.

The last thing that remains to be defined is $G_{\nu,\nu\concat\left\langle \varepsilon\right\rangle }^{M}$.
After we have defined it, $M$ is a model. Moreover, for every tuple
$\bar{a}\in M_{\nu}^{\varepsilon}$ and for every quantifier free
formula $\varphi$ from $L_{\Ss}^{\theta}$, there is a formula $\varphi'$
generated by concatenating $\nu\concat\left\langle \varepsilon\right\rangle $
to every symbol appearing in $\varphi$ such that $M_{\nu}^{\varepsilon}\models\varphi\left(\bar{a}\right)$
iff $M\models\varphi'\left(\bar{a}\right)$. In particular, if $I\subseteq M_{\nu}^{\varepsilon}$
is an indiscernible sequence in $M$, it is also such in $M_{\nu}^{\varepsilon}$.
\end{construction}
Main Theorem \ref{mainthmA} follows immediately from Proposition
\ref{prop:Easy direction} and:
\begin{thm}
\label{thm:Main}Let $\Ss=2^{<\omega}$. For any cardinals $\theta$,
$\kappa$ and a limit ordinal $\delta\geq\omega$, $\kappa\to\left(\delta\right)_{T_{\Ss}^{\theta},1}$
iff $\kappa\to\left(\delta\right)_{\theta}^{<\omega}$.\end{thm}
\begin{proof}
We shall prove the following: for every cardinal $\kappa$ and limit
ordinal $\delta\geq\omega$ such that $\kappa\not\to\left(\delta\right)_{\theta}^{<\omega}$,
there is a model $M\models T_{\Ss}^{\forall,\theta}$ and a set $A\subseteq P_{\left\langle \right\rangle }^{M}$
of size $\left|A\right|\geq\kappa$ with no non-constant indiscernible
sequence in $A^{\delta}$. That will suffice (because $M$ can be
extended to a model of $T_{\Ss}^{\theta}$).

The proof is by induction on $\kappa$. Note that if $\kappa\not\to\left(\delta\right)_{\theta}^{<\omega}$
then also $\lambda\not\to\left(\delta\right)_{\theta}^{<\omega}$
for $\lambda<\kappa$. The case analysis for some of the cases is
very similar to the one done in \cite{KaSh946}, but we repeat it
for completeness.
\begin{casenv}
\item \label{cas:.kappa=00003Dtheta} $\kappa\leq\theta$. Let $M\models T_{S}^{\forall,\theta}$
be any model and $A=\left\{ e_{\left\langle \right\rangle ,i}^{M}\left|\, i<\theta\right.\right\} $.
\item $\kappa$ is singular. Assume that $\kappa=\bigcup\left\{ \lambda_{i}\left|\, i<\sigma\right.\right\} $
where $\sigma<\kappa$ and $\lambda_{i}<\kappa$ for all $i<\sigma$.

Assume that $N_{0},A_{0}$ are the model and set given by the induction
hypothesis for $\sigma$. For all $i<\sigma$, let $M_{i},B_{i}$
the models and sets guaranteed by the induction hypothesis for $\lambda_{i}$.
Let $N_{1}$ be a model of $T_{S}^{\forall,\theta}$ containing $M_{i}$
as substructures for all $i<\sigma$ (it exists by JEP) and $A_{1}=\bigcup\left\{ B_{i}\left|\, i<\sigma\right.\right\} $.

Assume that $\left\{ a_{i}\left|\, i<\sigma\right.\right\} \subseteq A_{0}$
and that $\left\{ b_{j}\left|\,\bigcup\left\{ \lambda_{l}\left|\, l<i\right.\right\} \leq j<\lambda_{i}\right.\right\} \subseteq B_{i}$
are enumerations witnessing that $\left|A_{0}\right|\geq\sigma$,
$\left|B_{i}\right|\geq\lambda_{i}\backslash\bigcup\left\{ \lambda_{l}\left|\, l<i\right.\right\} $.

Let $M'\models T_{\left\{ \left\langle \right\rangle \right\} }^{\forall}$
be the standard model (see Definition \ref{def:standard model}) with
$P_{\left\langle \right\rangle }^{M'}=\kappa$ and $\mathordi{<_{\left\langle \right\rangle }^{M'}}=\mathordi{\in}$.

Let $N\models T_{\left\{ \left\langle \right\rangle \right\} }^{\forall,\theta}$
be a model such that $N\upharpoonright L_{\left\{ \left\langle \right\rangle \right\} }\supseteq M'$.
Use Construction \ref{con:construction A} to build a model $M\models T_{\Ss}^{\forall,\theta}$
with $M_{\left\langle \right\rangle }^{0}=N_{0}$ and $M_{\left\langle \right\rangle }^{1}=N_{1}$,
and define the functions $G_{\left\langle \right\rangle ,\left\langle 0\right\rangle }^{M}$
and $G_{\left\langle \right\rangle ,\left\langle 1\right\rangle }^{M}$
as follows: for a limit $\alpha<\kappa$ and $0<n<\omega$, define
$G_{\left\langle \right\rangle ,\left\langle 0\right\rangle }^{M}\left(\alpha+n\right)=a_{\min\left\{ j<\sigma\left|\,\alpha<\lambda_{j}\right.\right\} }$
and $G_{\left\langle \right\rangle ,\left\langle 1\right\rangle }^{M}\left(\alpha+n\right)=b_{\alpha}$.

Let $A=\kappa=P_{\left\langle 0\right\rangle }^{M'}$. Assume that
$\bar{s}=\left\langle s_{i}\left|\, i<\delta\right.\right\rangle $
is an indiscernible sequence contained in $A$.

Obviously it cannot be that $s_{1}<s_{0}$. Assume that $s_{0}<s_{1}$.
There are limit ordinals $\alpha_{i}$ and natural number $n_{i}$
such that $s_{i}=\alpha_{i}+n_{i}$, i.e., $s_{i}\equiv n_{i}\modp{\omega}$.
By indiscernibility, $n_{i}$ is constant, and denote it by $n$.
So $\left\langle \suc\left(s_{2i},s_{2i+1}\right)=s_{2i}+1\left|\, i<\delta\right.\right\rangle $
is an indiscernible sequence of successor ordinals.

$\left\langle G_{\left\langle \right\rangle ,\left\langle 0\right\rangle }\left(s_{2i}+1\right)\left|\, i<\delta\right.\right\rangle $
must be constant by the choice of $A_{0}$, and assume it is $a_{i_{0}}$
for $i_{0}<\sigma$. It follows that $\alpha_{2i}\in\lambda_{i_{0}}\backslash\bigcup\left\{ \lambda_{l}\left|\, l<i_{0}\right.\right\} $.
This means that $G_{\left\langle \right\rangle ,\left\langle 1\right\rangle }\left(s_{2i}+1\right)=b_{\alpha_{2i}}\subseteq B_{i_{0}}$
for all $i<\sigma$, and so $\alpha_{2i}$ must be constant. This
means that $\left\langle s_{2i}\left|\, i<\delta\right.\right\rangle $
is constant so also $\bar{s}$. 

\item \label{cas:kappaRegNotSA} $\kappa$ is regular but not strongly inaccessible.
Then there is some $\lambda<\kappa$ such that $2^{\lambda}\geq\kappa$.

Let $M_{0}\models T_{S}^{\forall,\theta}$ and $A_{0}\subseteq P_{\left\langle \right\rangle }^{M_{0}}$
satisfy the induction hypothesis for $\lambda$. Assume that $A_{0}\supseteq\left\{ a_{i}\left|\, i\leq\lambda\right.\right\} $
where $a_{i}\neq a_{j}$ for $i\neq j$.

Let $M'\models T_{\left\{ \left\langle \right\rangle \right\} }^{\forall}$
be a standard model such that $P_{\left\langle \right\rangle }^{M'}=2^{\leq\lambda}$
ordered by first segment.

Let $N\models T_{\left\{ \left\langle \right\rangle \right\} }^{\forall,\theta}$
be any model such that $N\upharpoonright L_{\left\{ \left\langle \right\rangle \right\} }\supseteq M'$.
We use Construction \ref{con:construction A} to build a model $M\models T_{S}^{\forall}$
using $N$ and $M_{\left\langle \right\rangle }^{0}=M_{\left\langle \right\rangle }^{1}=M_{0}$.
We need to define the functions $G_{\left\langle \right\rangle ,\left\langle 0\right\rangle }$
and $G_{\left\langle \right\rangle ,\left\langle 1\right\rangle }$:

For $f\in P_{\left\langle \right\rangle }^{M'}$ such that $\lg\left(f\right)=\alpha+n$
for some limit $\alpha$ and $n<\omega$, define $G_{\left\langle \right\rangle ,\left\langle 0\right\rangle }^{M'}\left(f\right)=a_{\alpha}$.
There are no further limitations on the functions $G_{\left\langle \right\rangle ,\left\langle 0\right\rangle }^{M'}$
and $G_{\left\langle \right\rangle ,\left\langle 1\right\rangle }^{M'}$
as long as they are regressive.

Let $A=2^{\leq\lambda}=P_{\left\langle \right\rangle }^{M'}$. Assume
for contradiction that $\left\langle s_{i}\left|\, i<\delta\right.\right\rangle $
is a non-constant indiscernible sequence contained in $A$.

It cannot be that $s_{1}<s_{0}$, because by indiscernibility, we
would have an infinite decreasing sequence.

It cannot be that $s_{0}<s_{1}$:

In that case, $\left\langle s_{i}\left|\, i<\delta\right.\right\rangle $
is increasing. For all $i<\delta$, let $t_{i}=\suc\left(s_{2i},s_{2i+1}\right)$.
The sequence $\left\langle t_{i}\left|\, i<\delta\right.\right\rangle $
is an indiscernible sequence contained in $\Suc\left(P_{\left\langle \right\rangle }^{M}\right)$
and so $t_{i}\equiv n\modp{\omega}$ for some constant $n<\omega$.
Hence $\left\langle \lg\left(t_{i}\right)-n\left|\, i<\delta\right.\right\rangle $
is increasing and $\left\langle G_{\left\langle \right\rangle ,\left\langle 0\right\rangle }\left(t_{i}\right)=a_{\lg\left(t_{i}\right)-n}\left|\, i<\delta\right.\right\rangle $
is a non-constant indiscernible sequence contained in $A_{0}$ ---
a contradiction.

Denote $r_{i}=s_{0}\wedge s_{i+1}$ for $i<\delta$. This is an indiscernible
sequence, and by the same arguments, it cannot decrease or increase.
But since $r_{i}<s_{0}$, it follows that $r_{i}$ is constant.

Assume that $s_{0}\wedge s_{1}<s_{1}\wedge s_{2}$, then $s_{1}\wedge s_{2}<s_{2}\wedge s_{3}$
and so $s_{2i}\wedge s_{2i+1}<s_{2\left(i+1\right)}\wedge s_{2\left(i+1\right)+1}$
for all $i<\delta$, and again --- $\left\langle s_{2i}\wedge s_{2i+1}\right\rangle $
is an increasing indiscernible sequence --- we reach a contradiction.

Similarly, it cannot be that $s_{0}\wedge s_{1}>s_{1}\wedge s_{2}$.
As both sides are smaller or equal to $s_{1}$, it must be that 
\[
s_{0}\wedge s_{2}=s_{0}\wedge s_{1}=s_{1}\wedge s_{2}.
\]
 But this is a contradiction (because if $\alpha=\lg\left(s_{0}\wedge s_{1}\right)$
then $\left|\left\{ s_{0}\left(\alpha\right),s_{1}\left(\alpha\right),s_{2}\left(\alpha\right)\right\} \right|=3$,
but the range of these functions is $\left\{ 0,1\right\} $).

\item $\kappa$ is strongly inaccessible.

Assume that $M_{\lambda},A_{\lambda}$ are the models and sets given
by the induction hypothesis for $\lambda<\kappa$. We may assume they
are disjoint. Let $N$ be a model of $T_{\Ss}^{\forall,\theta}$ containing
$M_{\lambda}$ for $\lambda<\kappa$ ($N$ exists by JEP), and let
$A=\bigcup\left\{ A_{\lambda}\left|\,\lambda<\kappa\right.\right\} \subseteq N$.
Recall that we have a function $\cc:\left[\kappa\right]^{<\omega}\to\theta$
that witnesses the fact that $\kappa\not\to\left(\delta\right)_{\theta}^{<\omega}$,
and that in Definition \ref{def:the model M_c} we defined a model
$M_{\cc}$ of $T_{\omega}^{\forall}$. Let $N_{\cc}\models T_{\omega}^{\forall,\theta}$
be a model such that $N_{\cc}\upharpoonright L_{\omega}\supseteq M_{\cc}$.
Let $S'=1^{<\omega}$ (finite sequences of zeros). We may think of
$N_{\cc}$ as a model of $T_{S'}^{\forall,\theta}$. Denote $0_{n}=\left\langle 0,\ldots,0\right\rangle $
where $\lg\left(0_{n}\right)=n$.

We use Construction \ref{con:construction A} and $S'$ to build a
model $M$ of $T_{\Ss}^{\forall,\theta}$:
\begin{itemize}
\item For all $n<\omega$, let $M_{0_{n}}^{1}=N$.
\item Define $G_{0_{n},0_{n}\concat\left\langle 1\right\rangle }^{M}$ as
follows:

\begin{itemize}
\item Recall that $P_{0_{n}}^{M}\supseteq P_{n}^{M_{\cc}}=M_{\p_{n}}$.
Assume that $B\subseteq\Suc_{\lim}\left(P_{n}^{M_{\cc}}\right)$ is
an $E^{\nb}$ class. By definition, $\left|B/E_{\p_{n}}\right|<\kappa$.
\item Choose some enumeration of the classes $\left\{ c_{i}\left|\, i<\left|B/E_{\p_{n}}\right|\right.\right\} $,
and an enumeration $A_{\left|B/E_{\p_{n}}\right|}\supseteq\left\{ a_{i}\left|\, i<\left|B/E_{\p_{n}}\right|\right.\right\} $
of pairwise distinct elements. Now, $G_{0_{n},0_{n}\concat\left\langle 1\right\rangle }^{M}$
maps every class $c_{i}$ (i.e., every element in $c_{i}$) to $a_{i}$.
It is easy to see that if $a\mathrela{E^{\nb}}b$ are distinct in
$\Suc_{\lim}\left(P_{n}^{M_{\cc}}\right)$, then $a$ and $b$ are
not $\sim^{M_{\cc}}$-equivalent (see Definition \ref{def:eq-relation}).
This means that $G_{0_{n},0_{n}\concat\left\langle 1\right\rangle }^{M}$
is well defined. Outside of $P_{n}^{M_{c}}$, define $G_{0_{n},0_{n}\concat\left\langle 1\right\rangle }^{M}$
arbitrarily as long as it is regressive.
\end{itemize}
\end{itemize}

Let $A=\Suc_{\lim}\left(P_{\left\langle \right\rangle }^{M_{\cc}}\right)$,
i.e., $A=\Suc_{\lim}\left(\kappa\right)$. Assume for contradiction
that $A$ contains a $\delta$-indiscernible sequence.

By Corollary \ref{cor:MainCorIacc}, there is $n<\omega$ and an indiscernible
sequence $\bar{v}$ in $\Suc_{\lim}\left(P_{0_{n}}^{M}\right)$ such
that for $i<j<\delta$, $v_{i}\mathrela{E^{\nb}}v_{j}$ but $\neg\left(v_{i}\mathrela Ev_{j}\right)$.
So $\left\langle G_{0_{n},0_{n}\concat\left\langle 1\right\rangle }^{M}\left(v_{i}\right)\left|\, i<\delta\right.\right\rangle $
is a non-constant indiscernible sequence in $A_{\left|\left[v_{0}\right]_{E^{\nb}}/E_{\p_{n}}\right|}$
--- a contradiction.

\end{casenv}
\end{proof}
\begin{rem}
Why in the definition of $\q$ (Definition \ref{def:q-1-1}) we demanded
that the image of $\eta$ is in $\Suc_{\lim}$ and that $\Gamma$
is defined only in limit levels? Had we given $\Gamma$ the freedom
to give values in every ordinal, then the ``fan'' (i.e., the sequence
in $\ind_{f}$) which we got in Lemma \ref{lem:MustBeGood-1-1} might
not have been in a successor to a limit level, so we would have no
freedom in applying $G$ on it. As $\Gamma$ is relevant only for
limit levels, the coloring was defined only on sequence in $\Suc_{\lim}$,
so we needed $\eta$ to give elements from there. 
\end{rem}

\section{\label{sec:strongly dependent}Strongly dependent theories}

As we said in the introduction, in \cite{Sh863} it is proved that
$\beth_{\left|T\right|^{+}}\left(\lambda\right)\to\left(\lambda^{+}\right)_{T,n}$
for strongly dependent $T$ and $n<\omega$. 

In \cite{KaSh946} we show that in $RCF$ there is a similar phenomenon
to what we have here, but for $\omega$-tuples: there are sets from
all cardinalities with no indiscernible sequence of $\omega$-tuples
up to the first strongly inaccessible cardinal. This explains why
the theorem mentioned was only proved for $n<\omega$.

The example we described here is not strongly dependent, but it can
be modified a bit so that it will be, and then give a similar theorem
for strongly dependent theories (or even strongly$^{2}$ dependent),
but for $\omega$-tuples. 
\begin{thm}
For every $\theta$ there is a strongly$^{2}$ dependent theory $T$
of size $\theta$ such that for all $\kappa$ and $\delta$, \textup{$\kappa\to\left(\delta\right)_{T,\omega}$
iff $\kappa\to\left(\delta\right)_{\theta}^{<\omega}$. }\end{thm}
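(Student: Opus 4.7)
The plan is first to adjust the construction to produce a strongly$^2$ dependent theory, then to re-run the argument of Sections~\ref{sec:The-main-theorem}--\ref{sec:The-inaccessible-case} with $1$-tuples replaced by $\omega$-tuples. For the adjustment, I would keep $S = \leftexp{\omega\geq}{2}$ but thin out the language by retaining only a single infinite branch of regressive maps, or equivalently work with the reduct indexed by $S' = \{0_n : n<\omega\} \cup \{0_n\concat\left\langle 1\right\rangle : n<\omega\}$. The linear chain of sorts $P_{0_n}$ together with their regressive maps $G_{0_n, 0_{n+1}}$ still supports the inductive tree-of-trees construction of Section~\ref{sec:The-inaccessible-case}, while the infinite branching responsible for the failure of strong dependence in the full language is removed. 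Quantifier elimination, JEP and AP for this reduct follow verbatim from Section~\ref{sec:Construction}, and dependence from Corollary~\ref{cor:T_SNIP}.

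The crucial new ingredient is the verification of strongly$^2$ dependence. Suppose for contradiction that an indiscernible array $\langle a_{i,j}\rangle$ and formulas $\varphi_i(x, a_{i,0}, b_i) \wedge \lnot\varphi_i(x, a_{i,1}, b_i)$ as in Definition~\ref{def:strongly2 dependent} are consistently realized by some $c$; by quantifier elimination the $\varphi_i$ may be taken quantifier-free. The key observation is Remark~\ref{rem:vc-density}: the degree of the polynomial bounding the number of $\equiv_{A,k}$-classes is independent of $k$. In our restricted setting this yields a uniform bound, depending only on the length of $c$ and $S'$, on how many rows of an indiscernible array can contribute nontrivially to $\tp_{\qf}(c / \bigcup_i \{a_{i,j}, b_i\})$. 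The regressiveness of the $G_{0_n, 0_{n+1}}$ together with the level-decreasing argument of Corollary~\ref{cor:TheHFunction-1-1} (now applied along the single branch) force the term structure of $c$ over the $a_{i,j}$ to stabilize in $i$, so all but finitely many rows have $\varphi_i(c, a_{i,0}, b_i) \leftrightarrow \varphi_i(c, a_{i,1}, b_i)$, contradicting consistency of infinitely many disagreements.

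The easy direction $\kappa \to (\delta)_\theta^{<\omega} \Rightarrow \kappa \to (\delta)_{T,\omega}$ adapts the proposition following Theorem~\ref{thm:Main}: color an increasing $2k$-tuple of $\omega$-tuples by a pair consisting of a formula and a finite coordinate-selection distinguishing the first half from the second; there are at most $\theta$ such pairs since $|L_S^\theta| \leq \theta$ and each formula uses finitely many coordinates of each argument. For the hard direction, build $M$ exactly as in Sections~\ref{sec:The-main-theorem}--\ref{sec:The-inaccessible-case} and let the witnessing set consist of $\omega$-tuples $\bar{a}_t = (t, G_{\langle\rangle, \langle 0\rangle}(t^{+}), G_{\langle 0\rangle, 0_2}(G_{\langle\rangle,\langle 0\rangle}(t^{++})), \ldots)$ indexed by $t$ in the set from the $1$-tuple construction, where the $t^{+k}$ are suitable successor representatives ensuring each $\omega$-tuple lives coherently across the chain of sorts. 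A non-constant indiscernible $\delta$-sequence in $A = \{\bar{a}_t\}$ projects coordinate-wise to an indiscernible sequence in each $P_{0_n}$, and Corollary~\ref{cor:MainCorIacc} applied at the first non-constant coordinate produces the forbidden fan configuration, contradicting the choice of coloring. The main obstacle is the verification of strongly$^2$ dependence; everything else is a controlled rewriting of the $1$-tuple argument.
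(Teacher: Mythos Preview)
Your approach has genuine gaps in two places, and differs fundamentally from the paper's route.

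First, restricting the index tree to $S' = \{0_n : n<\omega\} \cup \{0_n\concat\langle 1\rangle : n<\omega\}$ breaks the inductive construction of Section~\ref{sec:The-main-theorem}. In the singular and regular-not-inaccessible cases, and in the final step of the inaccessible case, the construction $\cons$ places a full model of the theory from the induction hypothesis at the child $\langle 1\rangle$ (respectively at $0_n\concat\langle 1\rangle$). In your $S'$ these nodes are leaves, so only a single sort $P_{0_n\concat\langle 1\rangle}$ is available there, with no further structure; there is no room to embed the inductive model, which itself carries an infinite chain of sorts. The self-similarity of $S = \leftexp{\omega\geq}{2}$---the subtree above any node is again a copy of $S$---is exactly what makes the induction go through, and your $S'$ does not have it.

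Second, your appeal to Remark~\ref{rem:vc-density} for strongly$^2$ dependence is unjustified: that remark is proved by induction on $|S|$, and its conclusion (that the degree of the bounding polynomial is independent of $k$) is stated only for finite $S$, with the degree explicitly allowed to depend on $S$. For an infinite $S'$ you have no uniform bound, and indeed the paper remarks that the original $T_S^\theta$ is not strongly dependent; there is no argument offered that passing to your $S'$ changes this.

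The paper's construction avoids both problems by a different and much shorter device: it takes the disjoint sum $T = \sum_{n<\omega} T_{S_n}^\theta$ of the theories for the \emph{finite} trees $S_n = \leftexp{n\geq}{2}$. Each summand is strongly$^2$ dependent by Remark~\ref{rem:vc-density} (now legitimately, since $S_n$ is finite), and a disjoint sum of strongly$^2$ dependent theories is strongly$^2$ dependent. Given the model $M \models T_S^\theta$ produced by the original $1$-tuple theorem, one obtains a model $N \models T$ with $Q_n^N$ a copy of $M \upharpoonright L_{S_n}^\theta$, and the $\omega$-tuple associated to $a \in P_{\langle\rangle}^M$ is simply $f_a(n) = (a,n)$. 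Because the sorts $Q_n$ do not interact, $\langle f_{a_i} : i<\delta\rangle$ is indiscernible in $N$ iff $\langle a_i\rangle$ is indiscernible in every $M \upharpoonright L_{S_n}^\theta$, i.e.\ iff $\langle a_i\rangle$ is indiscernible in $M$. No modification of the construction of Sections~\ref{sec:The-main-theorem}--\ref{sec:The-inaccessible-case}, and no new combinatorial argument, is required.
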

\begin{proof}
Right to left follows from Proposition \ref{prop:Easy direction}. 

For $n<\omega$ let $S_{n}=2^{\leq n}$ and let $T_{n}^{\theta}$
be the theory $T_{S_{n}}^{\theta}$ (see Corollary \ref{cor:TSThetaModelCompletion}).
Let $T$ be the theory $\sum_{n<\omega}T_{n}^{\theta}$: the language
is $\left\{ Q_{n}\left|\, n<\omega\right.\right\} \cup\left\{ R^{n}\left|\, R\in L_{S_{n}}^{\theta}\right.\right\} $
where $Q_{n}$ are unary predicates, and the theory says that they
are mutually disjoint and that each $Q_{n}$ is a model of $T_{n}^{\theta}$.
It is easy to see that this theory is complete and has quantifier
elimination. Denote $\Ss=2^{<\omega}$ as before. If $M$ is a model
of $T_{\Ss}^{\theta}$, then $M$ naturally induces a model $N$ of
$T$ (where $Q_{n}^{N}=\left(M\times\left\{ n\right\} \right)\upharpoonright L_{S_{n}}^{\theta}$).
For all $a\in M$, let $f_{a}\in\prod_{n<\omega}Q_{n}^{N}$ be defined
by $f_{a}\left(n\right)=\left(a,n\right)$ for $n<\omega$. Now, if
$A\subseteq P_{\left\langle \right\rangle }^{M}$ is any set with
no $\delta$-indiscernible sequence then the set $\left\{ f_{a}\left|\, a\in A\right.\right\} $
is a sequence of $\omega$-tuples with no indiscernible sequence of
length $\delta$. 

By Corollary \ref{cor:TSThetaModelCompletion}, it follows that each
$T_{n}$ is strongly$^{2}$ dependent, and so also $T$ (this can
be seen directly by Definition \ref{def:strongly2 dependent}, or
use an equivalent definition using mutually indiscernible sequences
\cite[Definition 2.3]{Sh863} and \cite[Claim 2.8 (3)]{Sh863}). 
\end{proof}
\bibliographystyle{alpha}
\bibliography{common}

\begin{thebibliography}{Kud11}

\bibitem[CS09]{ShCo919}
Moran Cohen and Saharon Shelah.
\newblock Stable theories and representation over sets, 2009.
\newblock arXiv:0906.3050.

\bibitem[Hod93]{Hod}
Wilfrid Hodges.
\newblock {\em Model Theory}, volume~42 of {\em Encyclopedia of mathematics and
  its applications}.
\newblock Cambridge University Press, Great Britain, 1993.

\bibitem[Kan09]{Kanamori}
Akihiro Kanamori.
\newblock {\em The higher infinite}.
\newblock Springer Monographs in Mathematics. Springer-Verlag, Berlin, second
  edition, 2009.
\newblock Large cardinals in set theory from their beginnings, Paperback
  reprint of the 2003 edition.

\bibitem[KS12]{KaSh946}
Itay Kaplan and Saharon Shelah.
\newblock Examples in dependent theories.
\newblock {\em J. Symbolic Logic}, 2012.
\newblock accepted, \url{arXiv:1009.5420}.

\bibitem[KS13]{Sh993}
Itay Kaplan and Saharon Shelah.
\newblock Chain conditions in dependent groups.
\newblock {\em Ann. Pure Appl. Logic}, 164(12):1322--1337, 2013.

\bibitem[Kud11]{russianConjecture}
K.~Zh. Kuda{\u\i}bergenov.
\newblock Independence property of first-order theories and indiscernible
  sequences.
\newblock {\em Mat. Tr.}, 14(1):126--140, 2011.

\bibitem[She86]{sh:d}
Saharon Shelah.
\newblock {\em Around classification theory of models}, volume 1182 of {\em
  Lecture Notes in Mathematics}.
\newblock Springer-Verlag, Berlin, 1986.

\bibitem[She90]{Sh:c}
Saharon Shelah.
\newblock {\em Classification theory and the number of nonisomorphic models},
  volume~92 of {\em Studies in Logic and the Foundations of Mathematics}.
\newblock North-Holland Publishing Co., Amsterdam, second edition, 1990.

\bibitem[She09]{Sh783}
Saharon Shelah.
\newblock Dependent first order theories, continued.
\newblock {\em Israel J. Math.}, 173:1--60, 2009.

\bibitem[She12]{Sh863}
Saharon Shelah.
\newblock Strongly dependent theories.
\newblock {\em Israel Journal of Mathematics}, 2012.
\newblock accepted.

\bibitem[TZ12]{TentZiegler}
Katrin Tent and Martin Ziegler.
\newblock {\em A Course in Model Theory (Lecture Notes in Logic)}.
\newblock Cambridge University Press, 2012.

\end{thebibliography}

\end{document}